\newenvironment{breakablealgorithm}
  {
   \begin{center}
     \refstepcounter{algorithm}
     \hrule height.8pt depth0pt \kern2pt
     \renewcommand{\caption}[2][\relax]{
       {\raggedright\textbf{\ALG@name~\thealgorithm} ##2\par}%
       \ifx\relax##1\relax 
         \addcontentsline{loa}{algorithm}{\protect\numberline{\thealgorithm}##2}%
       \else 
         \addcontentsline{loa}{algorithm}{\protect\numberline{\thealgorithm}##1}%
       \fi
       \kern2pt\hrule\kern2pt
     }
  }{
     \kern2pt\hrule\relax
   \end{center}
  }
\newtheorem{example}{Example}
\newtheorem{proposition}{Proposition}
\newtheorem{remark}{Remark}
 \numberwithin{equation}{section}
 \numberwithin{theorem}{section}
  \numberwithin{lemma}{section}
   \numberwithin{remark}{section}
      \numberwithin{example}{section}
         \numberwithin{figure}{section}
 \numberwithin{proposition}{section}
\journal{}
\begin{document}

\begin{frontmatter}



\title{Three   discontinuous Galerkin methods for  one- and two-dimensional nonlinear Dirac equations with a scalar self-interaction}


\author[label1]{Shu-Cun Li}
\ead{lisc26@163.com}
\author[label2]{Huazhong Tang\corref{cor1}}
\ead{hztang@math.pku.edu.cn}
\address[label1]
{School of Mathematics and Information Technology, Xingtai University, Xingtai 054001, P.R. China}
\address[label2]{Center for Applied Physics and Technology, HEDPS and LMAM,
    School of Mathematical Sciences, Peking University, Beijing 100871,
    P.R.China}

\cortext[cor1]{Corresponding author.}

\begin{abstract}This paper develops three high-order accurate discontinuous Galerkin (DG) methods for the one-dimensional (1D) and two-dimensional (2D) nonlinear Dirac (NLD) equations  with a general scalar self-interaction. They are
   the Runge-Kutta DG (RKDG) method and the DG methods with the one-stage fourth-order Lax-Wendroff type time discretizaiton (LWDG) and the two-stage fourth-order accurate time discretization (TSDG). The RKDG method uses the spatial DG approximation
    to discretize the NLD equations and then utilize the explicit multistage high-order Runge-Kutta time discretization for the first-order time derivatives, while the LWDG   and   TSDG methods, on the contrary, first give the one-stage fourth-order Lax-Wendroff type and the two-stage fourth-order time discretizations of the NLD equations, respectively, and then discretize the first- and higher-order spatial derivatives by using the spatial DG approximation.
    The $L^{2}$ stability of the 2D semi-discrete DG approximation
    is proved in the RKDG methods for a general triangulation,
    and
    the computational complexities of three 1D DG methods are  estimated.
    Numerical experiments are conducted to validate the accuracy and the conservative properties of the proposed methods. The interactions of the solitary waves, the standing and travelling waves are  investigated numerically and the 2D breathing pattern is observed.
\end{abstract}
\begin{keyword}
  Nonlinear Dirac equation \sep discontinuous Galerkin method \sep Lax-Wendroff type time discretization \sep two-stage fourth-order accurate time discretization \sep Runge-Kutta method \sep solitary wave interaction
  \MSC[2010] 65M60 \sep 35L05 \sep 81Q05 \sep 81-08
\end{keyword}
\end{frontmatter}


\section{Introduction}
\label{intro}

The Dirac equation is a relativistic wave equation in particle physics,
and provides  a natural description of an electron \cite{Dirac1928}.
It  predicted the existence of ``negative'' states for the electron and proton, and  thus successfully predicted the existence of antimatter.
After Dirac found the linear equation of the electron, the basic idea of nonlinear description of the elementary particle with spin-1/2 appeared, which made it possible to consider its self-interaction \cite{Ivanenko1938,Finkelstein1951,Finkelstein1956},
and the nonlinear Dirac (NLD) equation was proposed as a possible basis model for a unified field theory \cite{Heisenberg1957}.
The NLD equation allows solitary wave solutions or particle-like solutions (the stable localized solutions with finite energy and charge) \cite{Ranada1983},
that is to say, the particles appear as intense localized regions of field \cite{Weyl1950}.
Around the 1970s and 1980s,
wide interest of
physicists and mathematicians was attracted by different NLD models  with different self-interactions, mainly including the Thirring model \cite{Thirring1958}, the Soler model \cite{Soler1970}, the Gross-Neveu model \cite{Gross1974} (equivalent to the massless Soler model), and the bag model \cite{Mathieu1984} (i.e. the solitary waves with finite compact support),
especially to look for the solitary wave solutions and to investigate the related physical and mathematical properties \cite{Ranada1983}.
%
Since entering the 21st century, the Dirac equation is  used to study the structures and dynamical properties of the two-dimensional (2D) materials such as graphene and graphite \cite{Novoselov2005,Castro2009,Abanin2011,Fefferman2012} and the relativistic effects in molecules in super intense lasers \cite{Fillion-Gourdeau2013} etc.
Moreover, the Bose-Einstein condensates in a honeycomb optical lattice can also be described by a NLD equation in the long wavelength, mean field limit \cite{Haddad2009}. 
 %
Mathematical interests related to the NLD equation are mainly manifested in
 deriving the analytical solitary wave solutions,
 the stability analysis of the NLD solitary waves,
the analysis of global well-posedness and numerical methods etc.
For the 1D NLD equation (i.e. one space dimension), several analytical solitary wave solutions are derived in literature. For example, the solitary wave solutions of the 1D NLD equation with arbitrary nonlinearity was studied in \cite{Cooper2010}.
However, for the high-dimensional NLD equation, there are no explicit solitary wave solutions \cite{Cuevas-Maraver2018}.
The stability of the solitary waves can be found in \cite{Shao2014,Cuevas-Maraver2016,Lakoba2018}.
The readers are referred to  the review in \cite{Xu2013} and references therein.

Numerical method has become one of the important tools to  derive the NLD solitary wave solutions, and to investigate their stability and interaction etc.
%
 The Crank-Nicolson (CN) scheme was first proposed
 for the 1D Soler model and used to simulate the interaction dynamics of the  NLD solitary waves in \cite{Alvarez1981,Alvarez1983}. Such interaction dynamics problem
 was carefully revisited in \cite{Shao2005} by utilizing a fourth-order accurate RKDG method \cite{Shao2006}.
Besides the recovery of the phenomena in \cite{Alvarez1981},
several new ones were observed, e.g. collapse in binary and ternary collisions of two-hump NLD solitary waves \cite{Shao2005}, a long-lived oscillating state formed with an approximate constant frequency in collisions of two standing waves \cite{Shao2006}, and the full repulsion in binary and ternary collisions of out-of-phase waves \cite{Shao2008}. Those numerical results also inferred that the two-hump profile could undermine the stability during the scattering of the NLD solitary waves. It is worth noting that the two-hump profile was first pointed out in \cite{Shao2005} and later gotten noticed by other researchers.
The multi-hump solitary waves  were further studied in \cite{Xu2015} in theory.
There also exist many other numerical schemes for solving the 1D NLD equation:
 the split-step spectral method \cite{Frutos1989},   the linearized CN scheme \cite{Alvarez1992}, the Legendre rational spectral method \cite{Wang2004}, the multisymplectic Runge-Kutta  method  \cite{Hong2006},
 the adaptive mesh method \cite{Wang2007}, the time-splitting methods \cite{Li2017}, and the compact methods \cite{Li2018a} etc.
 A review of the current state-of-the-art of numerical methods for the 1D  {NLD equation} was presented in \cite{Xu2013}. For the 1D {NLD equation} with the scalar and vector self-interaction, the CN schemes, the linearized CN schemes, the odd-even hopscotch scheme, the leapfrog scheme, a semi-implicit finite difference scheme, and the exponential operator splitting  schemes were extendedly proposed  and analyzed in the way of the accuracy and the time reversibility as well as the conservation of the discrete charge, energy and linear momentum.
For the NLD equation in the nonrelativistic limit regime,
the error estimates of  the CN scheme, the exponential wave integrator Fourier pseudospectral method and the time-splitting Fourier pseudospectral method   were studied in \cite{Bao2016}, and a uniformly accurate multiscale time integrator pseudospectral method was proposed in \cite{Cai2018}.

Unfortunately,  the existing work on numerical study of the 2D NLD equation is very limited.
The integrating-factor method was studied in \cite{Hoz2010}, whose
  authors pointed out that ``it is not clear how these schemes (the RKDG methods \cite{Shao2006}) could be generalized to higher spatial dimensions''.
   The standing wave solutions and vortex solutions were discussed by using the Fourier spectral method \cite{Cuevas-Maraver2016,Cuevas-Maraver2018}.
  Two kinds of high-order conservative schemes were obtained with the time-midpoint and the time-splitting methods as well as the operator-compensation method \cite{Li2019}.
 The aim of this paper is to extend the 1D RKDG method \cite{Shao2006} to the 2D NLD equation with a general scalar self-interaction, and to develop the LWDG and TSDG methods, i.e. the discontinuous Galerkin (DG) schemes  with the one-stage fourth-order accurate Lax-Wendroff type time discretization \cite{Qiu2005} and the two-stage fourth-order accurate time discretization \cite{Li2016,Yuan2020}. 
The RKDG method uses
the spatial DG approximation to discretize the NLD equations and then utilize the explicit
multistage high-order Runge-Kutta time discretization for the first-order time derivatives,
while the LWDG and TSDG methods, on the contrary, first give the one-stage
fourth-order Lax-Wendroff type and the two-stage fourth-order time discretizations of the NLD
equations, and then discretize the first- and higher-order spatial derivatives by using the
spatial DG approximation.
The computational complexities of three  1D fully discrete  DG
methods will be theoretically estimated
and their accuracy and performance will be validated by numerical experiments.

This paper is organized as follows.  Section \ref{Sec_preliminary} introduces
 the NLD equations and their  conservation laws, and discusses the standing and travelling wave solutions of the 2D NLD equations. 
 Section \ref{Sec_methods} proposes three high-order accurate DG methods. The $L^{2}$ stability of the 2D semi-discrete DG method is discussed  in the RKDG scheme for
 a general triangulation, and the computational complexities of the 1D fully discrete DG methods are also studied. Section \ref{Sec_results} conducts some experiments  to validate the accuracy   and the conservative properties of our DG methods and to investigate some new phenomena.  Section \ref{Sec_conclusion} draws the conclusion. 

\section{Nonlinear Dirac equations}
\label{Sec_preliminary}

This section introduces the 1D and 2D  NLD equations with a general scalar
 self-interaction and the 2D standing and travelling wave solutions.

This paper    is concerned with numerical methods for
the 1D NLD equation
\begin{equation}\label{1dNLDE}
  \partial_{t} \Psi \left( t,x \right) + \sigma_{1} \partial_{x} \Psi \left( t,x \right)  + \mathrm{i} g \left( s \right) \sigma_{3} \Psi \left( t,x \right)  = 0,\ x \in \mathbb{R},\ t \geq 0,
\end{equation}
with  the spinor unknown $\Psi = \Psi \left( t,x \right) = \left( \psi_{1} \left( t,x \right), \psi_{2} \left( t,x \right) \right) ^{\top}\in \mathbb C^2$,
and the 2D NLD equation
\begin{equation}\label{2dNLDE}
  \partial_{t} \Psi \left( t,x,y \right) + \sigma_{1} \partial_{x} \Psi \left( t,x,y \right) + \sigma_{2} \partial_{y} \Psi \left( t,x,y \right) + \mathrm{i} g \left( s \right) \sigma_{3} \Psi \left( t,x,y \right)  = 0,\ \left( x,y \right) \in \mathbb{R}^{2},\ t \geq 0,
\end{equation}
with $\Psi = \Psi \left( t,x,y \right) = \left( \psi_{1} \left( t,x,y \right), \psi_{2} \left( t,x,y \right) \right) ^{\top} \in \mathbb{C}^{2}$.
Here $\mathrm{i} = \sqrt{-1}$, $s = \Psi^{\ast} \sigma_{3} \Psi$, $g \left( s \right) = m - \left( \kappa + 1 \right) \lambda s^{\kappa}$ with $\kappa > 0$,
  the particle mass $m \geq 0$ and the nonnegative parameter $\lambda$,
  the superscripts $\ast$ and $\top$ denote the complex conjugate transpose
  and the vector transpose, respectively, and
 \begin{equation*}
  \sigma_{1} = \left(
  \begin{array}{cc}
    0 & 1 \\
    1 & 0%
  \end{array}
  \right),\ \sigma_{2} = \left(
  \begin{array}{cc}
    0 & -\mathrm{i} \\
    \mathrm{i} & 0%
  \end{array}
  \right),\ \sigma_{3} = \left(
  \begin{array}{cc}
    1 & 0 \\
    0 & -1%
  \end{array}
  \right) \label{Pauli}
\end{equation*}
are three Pauli matrices.
The term $\left( \kappa + 1 \right) \lambda (\Psi^{\ast} \sigma_{3} \Psi)^{\kappa}$
represents the general scalar self-interaction.
When $\kappa = 1$, Eq. \eqref{2dNLDE} reduces to the  Soler model \cite{Soler1970}. 


For the 1D and 2D NLD equations \eqref{1dNLDE} and \eqref{2dNLDE}, assuming that the solutions are smooth enough, we may derive the following proposition.

\begin{proposition}\label{Pro_conservation}
If  $\lim_{|\vec x| \rightarrow + \infty} \left| \Psi \left( t,\vec x \right) \right| = 0$
  holds uniformly for $t \geq 0$, then the charge $Q$ and the energy $E$ are conservative, i.e. $\frac{\mathrm{d}}{\mathrm{d} t} Q \left( t \right) = 0,\ \frac{\mathrm{d}}{\mathrm{d} t} E \left( t \right) = 0$, where
  \begin{equation*}
    Q \left( t \right) = \int \nolimits_{\mathbb{R}^{d}} \rho_{Q} \left( t,\vec x \right) \mathrm{d} \vec x ,\ E \left( t \right) = \int \nolimits_{\mathbb{R}^{d}} \rho_{E} \left( t,\vec x\right) \mathrm{d} \vec x.
  \end{equation*}
Here the charge density $\rho _{Q}:= \Psi^{\ast} \Psi$,
and $\vec x$ and the energy density  are given by
  \begin{align*}
 \mbox{\rm 1D ($d=1$)}\quad & {\vec x=x},\
 \rho _{E} := \mathrm{Im} \left( \Psi^{\ast} \sigma_{1} \partial_{x} \Psi \right) + m \Psi^{\ast} \sigma_{3} \Psi - \lambda \left( \Psi ^{\ast} \sigma_{3} \Psi \right)^{\kappa + 1},\\
   \mbox{\rm 2D  ($d=2$)}\quad & {\vec x=(x,y)}, \
  \rho _{E} := \mathrm{Im} \left( \Psi^{\ast} \sigma_{1} \partial_{x} \Psi + \Psi^{\ast} \sigma_{2} \partial_{y} \Psi \right) + m \Psi^{\ast} \sigma_{3} \Psi - \lambda \left( \Psi ^{\ast} \sigma_{3} \Psi \right)^{\kappa + 1}.
  \end{align*}
\end{proposition}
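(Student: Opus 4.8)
The plan is to establish, for each conserved quantity, a local conservation law of the form $\partial_t\rho + \nabla\cdot\vec J = 0$ and then integrate over $\mathbb{R}^d$, so that the flux contribution becomes a boundary term that vanishes by the decay hypothesis $\lim_{|\vec x|\to+\infty}|\Psi(t,\vec x)|=0$ (together with the assumed smoothness and integrability). I would carry out the argument for the 2D equation \eqref{2dNLDE}, the 1D case \eqref{1dNLDE} being recovered by discarding the $\sigma_2\partial_y$ terms. Three preliminary facts are used throughout: the Pauli matrices are constant and Hermitian, $\sigma_j^\ast=\sigma_j$; the scalar $s=\Psi^\ast\sigma_3\Psi$ is real, hence $g(s)$ is real; and taking the conjugate transpose of \eqref{2dNLDE} gives the conjugate equation $\partial_t\Psi^\ast=-\partial_x\Psi^\ast\sigma_1-\partial_y\Psi^\ast\sigma_2+\mathrm{i} g(s)\Psi^\ast\sigma_3$.

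For the charge I would differentiate $\rho_Q=\Psi^\ast\Psi$ in $t$ and substitute both \eqref{2dNLDE} and its conjugate. The two self-interaction contributions are $\mathrm{i} g(s)\Psi^\ast\sigma_3\Psi$ and $-\mathrm{i} g(s)\Psi^\ast\sigma_3\Psi$, which cancel because $g(s)$ is real; the remaining terms assemble, using that $\sigma_1,\sigma_2$ are constant, into $\partial_t\rho_Q=-\partial_x(\Psi^\ast\sigma_1\Psi)-\partial_y(\Psi^\ast\sigma_2\Psi)$. Integrating and applying the divergence theorem with the decay condition yields $\frac{\mathrm{d}}{\mathrm{d} t}Q=0$. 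This is the straightforward half.

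For the energy the essential algebraic observation is that the nonlinearity is the derivative of the potential appearing in $\rho_E$: writing $V(s)=ms-\lambda s^{\kappa+1}$ one has $V'(s)=m-(\kappa+1)\lambda s^{\kappa}=g(s)$, so the potential part contributes $\partial_t V(s)=g(s)\partial_t s=g(s)(\partial_t\Psi^\ast\sigma_3\Psi+\Psi^\ast\sigma_3\partial_t\Psi)$. For the kinetic part I would write $\mathrm{Im}(w)=\frac{1}{2\mathrm{i}}(w-\bar w)$, differentiate in $t$, and integrate by parts in $x$ and $y$ to move the spatial derivatives off the second-order mixed terms; after integration each kinetic piece reduces, under the integral sign, to $\frac{1}{\mathrm{i}}(\partial_t\Psi^\ast\sigma_1\partial_x\Psi-\partial_x\Psi^\ast\sigma_1\partial_t\Psi)$ and its $y$-analogue, the boundary terms again being killed by the decay hypothesis.

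The final step collects $\frac{\mathrm{d}}{\mathrm{d} t}E$ and groups the integrand by the factors $\partial_t\Psi^\ast$ and $\partial_t\Psi$. The coefficient of $\partial_t\Psi^\ast$ is $\frac{1}{\mathrm{i}}(\sigma_1\partial_x\Psi+\sigma_2\partial_y\Psi+\mathrm{i} g(s)\sigma_3\Psi)=-\frac{1}{\mathrm{i}}\partial_t\Psi$ by \eqref{2dNLDE}, and the coefficient of $\partial_t\Psi$ is, by the conjugate equation, $\frac{1}{\mathrm{i}}\partial_t\Psi^\ast$; hence the two contributions are $-\frac{1}{\mathrm{i}}\partial_t\Psi^\ast\partial_t\Psi$ and $+\frac{1}{\mathrm{i}}\partial_t\Psi^\ast\partial_t\Psi$, which cancel pointwise, giving $\frac{\mathrm{d}}{\mathrm{d} t}E=0$. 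I expect the main obstacle to be purely organizational: keeping the Pauli-matrix products and the several integration-by-parts boundary terms straight so that the kinetic terms land in exactly the form matching the equation of motion. Recognizing $g=V'$ is what makes the potential term combine with the kinetic term, and is the conceptual heart of the argument.
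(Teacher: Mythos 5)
Your proposal is correct and follows essentially the same route as the paper: charge conservation via the pointwise conservation law obtained by adding the equation to its conjugate, and energy conservation via the identity $g(s)=V'(s)$ for $V(s)=ms-\lambda s^{\kappa+1}$ together with integration by parts (justified by the decay hypothesis) on the mixed $\partial_{tx}\Psi$, $\partial_{ty}\Psi$ terms. The only difference is organizational: the paper multiplies the equation by $\partial_{t}\Psi^{\ast}$ and takes imaginary parts at the outset, so the real term $\left(\partial_{t}\Psi^{\ast}\right)\partial_{t}\Psi$ never appears, whereas you differentiate $E(t)$ first and substitute the equation and its conjugate at the end, where the same fact surfaces as the cancellation $-\tfrac{1}{\mathrm{i}}\left|\partial_{t}\Psi\right|^{2}+\tfrac{1}{\mathrm{i}}\left|\partial_{t}\Psi\right|^{2}=0$ — the identical computation run in the opposite direction.
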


\begin{proof} In the following, only the case of $d=2$ is considered.

 (i) From   \eqref{2dNLDE}, one has
  \begin{equation*}
    \Psi^{\ast} \partial_{t} \Psi + \Psi^{\ast} \sigma_{1} \partial_{x} \Psi + \Psi^{\ast} \sigma_{2} \partial_{y} \Psi + \mathrm{i} g \left( s \right) s = 0,
  \end{equation*}
  and its complex conjugate form
  \begin{equation*}
    \left( \partial_{t} \Psi^{\ast} \right) \Psi + \left( \partial_{x} \Psi^{\ast} \right) \sigma_{1} \Psi + \left( \partial_{y} \Psi^{\ast} \right) \sigma_{2} \Psi - \mathrm{i} g \left( s \right) s = 0.
  \end{equation*}
  Summing up them gives
  \begin{equation*}
    \partial_{t} \left( \Psi^{\ast} \Psi \right) + \partial_{x} \left( \Psi^{\ast} \sigma_{1} \Psi \right) + \partial_{y} \left( \Psi^{\ast} \sigma_{2} \Psi \right) = 0.
  \end{equation*}
  Integrating it with respect to $\vec x$ yields the charge conservation law $\frac{\mathrm{d}}{\mathrm{d} t} \int_{\mathbb{R}^{d}} \rho _{Q} \mathrm{d} \vec x= 0$ under the hypothesis.

 (ii) From \eqref{2dNLDE}, one also has
  \begin{equation*}
    \left( \partial_{t} \Psi^{\ast} \right) \partial_{t} \Psi + \left( \partial_{t} \Psi^{\ast} \right) \sigma_{1} \partial_{x} \Psi + \left( \partial_{t} \Psi^{\ast} \right) \sigma_{2} \partial_{y} \Psi + \mathrm{i} g \left( s \right) \left( \partial_{t} \Psi^{\ast} \right) \sigma_{3} \Psi = 0.
  \end{equation*}
  Taking the imaginary part and noticing the definition of $g \left( s \right)$ gives
  \begin{equation}\label{imag}
    \mathrm{Im} \left( \left( \partial_{t} \Psi^{\ast} \right) \sigma_{1} \partial_{x} \Psi + \left( \partial_{t} \Psi^{\ast} \right) \sigma_{2} \partial_{y} \Psi \right) + \frac{1}{2} \partial_{t} \left[ m \left( \Psi^{\ast} \sigma_{3} \Psi \right) - \lambda \left( \Psi ^{\ast} \sigma_{3} \Psi \right)^{\kappa + 1} \right] = 0.
  \end{equation}
On the other hand, using the integration by parts and the hypothesis yields
  \begin{equation*}
    \int_{\mathbb{R}} \left( \partial_{x} \Psi^{\ast} \right) \sigma_{1} \partial_{t} \Psi \mathrm{d} x = - \int_{\mathbb{R}} \Psi^{\ast} \sigma_{1} \left( \partial_{tx} \Psi \right) \mathrm{d} x,
  \end{equation*}
 and then
  \begin{equation*}
    2 \int_{\mathbb{R}^{2}} \mathrm{Im} \left( \left( \partial_{t} \Psi^{\ast} \right) \sigma_{1} \partial_{x} \Psi \right) \mathrm{d} x \mathrm{d} y = \frac{\mathrm{d}}{\mathrm{d} t} \int_{\mathbb{R}^{2}} \mathrm{Im} \left( \Psi^{\ast} \sigma_{1} \partial_{x} \Psi \right) \mathrm{d} x \mathrm{d} y.
  \end{equation*}
  Similarly, one has
  \begin{equation*}
    2 \int_{\mathbb{R}^{2}} \mathrm{Im} \left( \left( \partial_{t} \Psi^{\ast} \right) \sigma_{2} \partial_{y} \Psi \right) \mathrm{d} x \mathrm{d} y = \frac{\mathrm{d}}{\mathrm{d} t} \int_{\mathbb{R}^{2}} \mathrm{Im} \left( \Psi^{\ast} \sigma_{2} \partial_{y} \Psi \right) \mathrm{d} x \mathrm{d} y.
  \end{equation*}
Combining them with \eqref{imag} gets $\frac{\mathrm{d}}{\mathrm{d} t} E \left( t \right) = 0$. The proof is completed.
\end{proof}

\begin{remark}
  The proof of Proposition \ref{Pro_conservation} with $\kappa = 1$ and $d=1$ was given in \cite{Shao2006}, but there exists a difference between those proofs of the energy conservation law.
\end{remark}

The  standing and travelling wave solutions of  the 1D NLD equation  can be found
in \cite{Cooper2010,Shao2006,Xu2013,Xu2015}, so that  they are not presented here to avoid repetition. 
For the 2D NLD equation, the standing wave solution was approximately obtained in \cite{Cuevas-Maraver2018}  by using the spectral method, the fixed point method,
and the following ansatz in the polar coordinates
  \begin{equation}\label{ansatz}
    \Psi^{sw} \left( t,r,\theta \right) =   \begin{pmatrix}
      \varphi \left( r,\omega \right) \mathrm{e}^{\mathrm{i} S \theta} \\
      \mathrm{i} \chi \left( r,\omega \right) \mathrm{e}^{\mathrm{i} \left( S + 1 \right) \theta}
    \end{pmatrix}         \mathrm{e}^{- \mathrm{i} \omega t},
  \end{equation}
where $\varphi$ and $\chi$ are two real-valued functions, and $S$ is the vorticity for the first spinor component.
In fact, first substituting \eqref{ansatz} into \eqref{2dNLDE} gives the following
system of ordinary differential equations
\begin{equation}\label{ansatz2}
  \begin{array}{l}
 \left( \frac{\mathrm{d}}{\mathrm{d} r} + \frac{S + 1}{r} \right) \chi +
 (g \left( \hat{s} \right)-\omega) \varphi=0, \\
      \left( \frac{\mathrm{d}}{\mathrm{d} r} - \frac{S}{r} \right) \varphi +( g \left( \hat{s} \right) +\omega)\chi=0,
  \end{array}
\end{equation}
where $\hat{s} = \varphi^{2} - \chi^{2}$, $r > 0$.
Then using the Chebyshev spectral method to approximate
the ODE system \eqref{ansatz2}
gives the  nonlinear algebraic system and the final approximate solutions are
obtained by the iterative method, e.g. the fixed point method. The readers are referred to   Section 3.2.2 of
\cite{Cuevas-Maraver2018} for the details.
Once we have the standing wave solution $\Psi^{sw} \left( t,r,\theta \right)$,
the travelling wave solution  $\Psi^{tw} \left( t,r,\theta \right)$ can be obtained
by using the Lorentz transformation. For example,
under the  Lorentz transformation with the speed of light $c = 1$
and  the relative velocity $v$ in the $x$-direction
\begin{equation*}
    \tilde{t} = \delta \left( t - v x \right), \ \
    \tilde{x} = \delta \left( x - v t \right), \ \
    \tilde{y} = y,
\end{equation*}
the travelling wave solution is gotten  by
\begin{equation*}
  \Psi^{tw} \left( t,x,y \right) = \left(
  \begin{array}{cc}
    \sqrt{\frac{\delta + 1}{2}} & \mathrm{sign} \left( v \right) \sqrt{\frac{\delta - 1}{2}} \\
    \mathrm{sign} \left( v \right) \sqrt{\frac{\delta - 1}{2}} & \sqrt{\frac{\delta + 1}{2}}
  \end{array}
  \right) \Psi^{sw} \left( \tilde{t},\tilde{x},\tilde{y} \right),
\end{equation*}
where  $\delta = \frac{1}{\sqrt{1 - v^{2}}}$ is the Lorentz factor.
Figure \ref{Fig_symmetry} shows   the standing wave solution and the travelling wave solution with $v =0.5$ at $t = 0$, where $\omega$ is taken as $0.12$ and $S=0$.
The right plot  shows clearly that the charge density $\left| \Psi^{tw} \left( 0,x,y \right) \right|^{2}$ loses  symmetry in the $y$-direction.
The lack of symmetry of $\left| \Psi^{tw} \left( t,x,y \right) \right|^{2}$ in the $y$-direction is caused by the above Lorentz transformation and the difference between $\mathrm{e}^{- \mathrm{i} S \theta}$ and $\mathrm{e}^{- \mathrm{i} \left( S + 1 \right) \theta}$ in \eqref{ansatz}.

\begin{figure}[htbp]
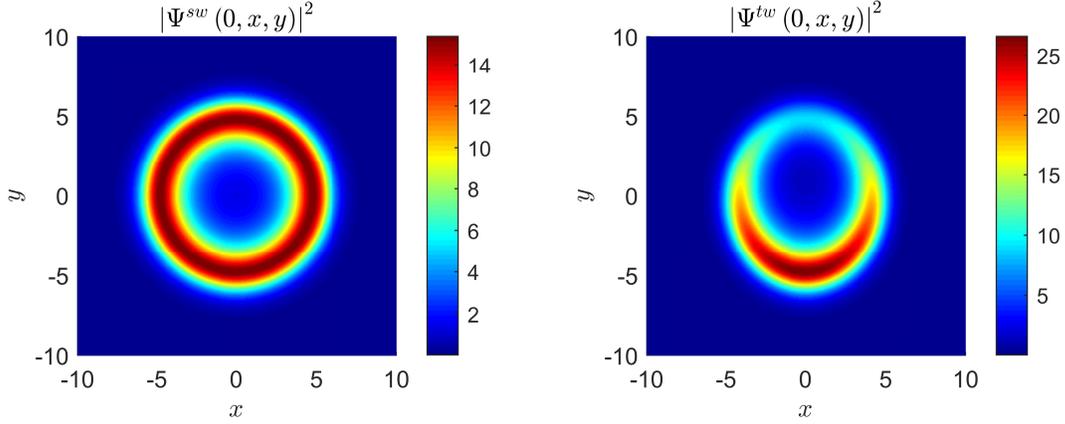

  \centering
  \includegraphics[width=0.45\textwidth]{symmetry_1.pdf}
  \includegraphics[width=0.45\textwidth]{symmetry_2.pdf} \\
  \caption{The charge densities $\rho_Q$ at $t = 0$.
  Left: the standing wave solution, right: the travelling wave solution ($v=0.5$).}
  \label{Fig_symmetry}
\end{figure}

\section{Numerical methods}
\label{Sec_methods}
This section focuses on developing three high-order accurate DG methods of the 2D  {NLD equation} \eqref{2dNLDE} with three time discretizations on the Cartesian grid.
The 1D RKDG methods have been presented in \cite{Shao2006}, while the 1D
LWDG and TSDG methods can be obtained by removing all the dependence of the wave function $\Psi$ on $y$ from corresponding 2D schemes.

If defining $\bm{u} = \bm{u} \left( t,x,y \right) = \left( u_{1}, u_{2}, u_{3}, u_{4} \right)^{\top} := \left( \psi_{1}^{re}, \psi_{2}^{re}, \psi_{1}^{im}, \psi_{2}^{im} \right)^{\top}$, where $\psi_{\ell}^{re}$ and $\psi_{\ell}^{im}$ denote the real and imaginary parts of $\psi_{\ell}$, respectively, $\ell=1,2$, then
the  {NLD equation} \eqref{2dNLDE} can be rewritten as follows
\begin{equation}\label{2dNLDE4}
  \partial_{t} \bm{u} \left( t,x,y \right) = - \alpha \partial_{x} \bm{u} \left( t,x,y \right) - \beta \partial_{y} \bm{u} \left( t,x,y \right) + g \left( \rho \right) \gamma \bm{u} \left( t,x,y \right),\ \left( x,y \right) \in \Omega,\ t>0, 
\end{equation}
or in the following compact form
\begin{equation}\label{2dNLDE4comp}
  \partial_{t} \bm{u} = -\nabla \cdot \bm{f} \left( \bm{u} \right) + \bm{\mathcal{M}} \left( \bm{u} \right),
\end{equation}
where
$ 
    \bm{f} \left( \bm{u} \right)
    := \left( \alpha \bm{u},\beta \bm{u} \right)$,
    $ (
    \mathcal{M}_{1},    \mathcal{M}_{2},
    \mathcal{M}_{3},
    \mathcal{M}_{4})^{\top}= {\bm{\mathcal{M}}} \left( \bm{u} \right)
    := g \left( \rho \right) \gamma \bm{u}$,
    $\rho= u_{1}^{2} + u_{3}^{2} - u_{2}^{2} - u_{4}^{2}$,
and
\begin{align*}
  \alpha &=
  \begin{pmatrix}
    \sigma_{1} &  0\\
    0 & \sigma_{1}
  \end{pmatrix}, \ \beta =
  \begin{pmatrix}
   0  & \mathrm{i} \sigma_{2} \\
    - \mathrm{i} \sigma_{2} & 0
  \end{pmatrix}, \ \gamma =
  \begin{pmatrix}
    0 & \sigma_{3} \\
    - \sigma_{3} & 0
  \end{pmatrix}.
\end{align*}
%

\subsection{RKDG method}\label{sect3.1:RKDG}

Let $\mathcal{T}_{h}$ be a rectangular partition of the 2D domain $\Omega$ and for each element
 $\mathcal{K} \in \mathcal{T}_{h}$,
  $\mathcal{P}^{q} \left( \mathcal{K} \right)$ represent the space of the real-valued polynomials on $\mathcal{K}$ of degree at most $q$. The RKDG method \cite{Cockburn1989a,Cockburn1998,Shao2006}
  is to seek first each component of the approximate solution $\bm{u}_{h} \left( t,x,y \right) = \left( u _{1,h}, u _{2,h}, u _{3,h}, u _{4,h} \right) ^{\top}$ for any $t$ in the function space
\begin{equation*}
  \mathcal{V}_{h} := \left\{  \phi\in L^2(\Omega) :~ \phi \left( x,y \right) \in \mathcal{P}^{q} \left( \mathcal{K} \right),\ \left( x,y \right) \in \mathcal{K},\ \forall \mathcal{K} \in \mathcal{T}_{h} \right\},
\end{equation*}
such that for each component of any $\bm{v}_{h}$ in $\mathcal{V}_{h}$, one has
\begin{equation}\label{2dweak_rk}
  \int_{\mathcal{K}} \left( \partial_{t} \bm{u}_{h} \right) \circ \bm{v}_{h} \mathrm{d} x \mathrm{d} y = - \sum \limits_{e \in \partial \mathcal{K}} \int_{e} 
  \widehat{\bm{h}}_{e\mathcal{K}} \left(\bm{u}_{h}^{-},\bm{u}_{h}^{+} \right)
  \circ \bm{v}_{h}^{-} \mathrm{d} S
   + \int_{\mathcal{K}} \left[ \bm{f} \left( \bm{u}_{h} \right)   \nabla \bm{v}_{h} + \bm{\mathcal{M}} \left( \bm{u}_{h} \right) \circ \bm{v}_{h} \right] \mathrm{d} x \mathrm{d} y,
\end{equation}
where  
$\bm{f} \left( \bm{u}_{h} \right)   \nabla \bm{v}_{h}
     = \left(
    \bm{f}_{1,h} \cdot \nabla v_{1,h}, \cdots,
    \bm{f}_{4,h} \cdot \nabla v_{4,h}
  \right)^{\top}$,
  $\bm{f}_{\ell}$ is the $\ell$th row vector of $\bm{f}$, $\ell=1,\cdots,4$,
``$\circ$'' represents the Hadamard product, $\partial \mathcal{K}$ denotes the boundary of $\mathcal{K}$, and $\widehat{\bm{h}}_{e\mathcal{K}} \left(\bm{u}_{h}^{-},\bm{u}_{h}^{+} \right)$ is the two-point numerical flux approximating
the flux $\bm{f} \left( \bm{u}_{h} \left( t,x,y \right) \right) \bm{n}_{e\mathcal{K}}^{\top}$ {with $\bm{n}_{e\mathcal{K}}=(n_{e\mathcal{K},1},n_{e\mathcal{K},2})$ the outward unit normal to the edge $e$ of the element $\mathcal{K}$}.
The numerical flux $\widehat{\bm{h}}_{e\mathcal{K}}$ satisfies
\begin{equation}\label{hhat_relation}
  \widehat{\bm{h}}_{e\mathcal{K}} + \widehat{\bm{h}}_{e\mathcal{K}^{\prime}} = 0,
\end{equation}
and  may be chosen as the following Lax-Friedrichs type flux \cite{Shao2006}
\begin{equation}\label{LFflux}
  \widehat{\bm{h}}_{e\mathcal{K}}^{\mbox{\tiny LF}} \left( \bm{u}_{h}^{-},\bm{u}_{h}^{+} \right) = \frac{1}{2} \left[ \bm{f} \left( \bm{u}_{h}^{-} \right) \bm{n}_{e\mathcal{K}}^{\top}  + \bm{f} \left( \bm{u}_{h}^{+} \right) \bm{n}_{e\mathcal{K}}^{\top}  - \left( \bm{u}_{h}^{+} - \bm{u}_{h}^{-} \right) \right].
\end{equation}
Here $\bm{u}_{h}^{\pm} \left( t,x,y \right)$ are the limiting values of $\bm{u}_{h}$ obtained from the interior ($-$) and the exterior ($+$) of $\mathcal{\mathcal{K}}$, i.e.
\begin{equation*}
  \bm{u}_{h}^{-} \left( t,x,y \right) = \lim \limits_{\left( \tilde{x},\tilde{y} \right) \rightarrow \left( x,y \right), \left( \tilde{x},\tilde{y} \right) \in \mathcal{K}} \bm{u}_{h} \left( t,\tilde{x},\tilde{y} \right),
\end{equation*}
\begin{equation*}
  \bm{u}_{h}^{+} \left( t,x,y \right) = \left\{
  \begin{array}{ll}
    \bm{\gamma}_{h} \left( t,x,y \right), & \mbox{if} \left( x,y \right) \in \partial \Omega, \\
    \lim \limits_{\left( \tilde{x},\tilde{y} \right) \rightarrow \left( x,y \right), \left( \tilde{x},\tilde{y} \right) \in \mathcal{K}^{\prime}} \bm{u}_{h} \left( t,\tilde{x},\tilde{y} \right), & \mbox{otherwise},
  \end{array}
  \right.
\end{equation*}
where $\mathcal{K}^{\prime}$ is the neighboring element of $\mathcal{K}$ by
a common edge $e$, as shown in Figure \ref{Fig_ele2}, $\bm{\gamma}_{h} \left( t,x,y \right)$ is
the discrete boundary value of $\bm{u}_{h}$.

\begin{figure}[htbp]
  \centering
  \includegraphics[width=0.45\textwidth]{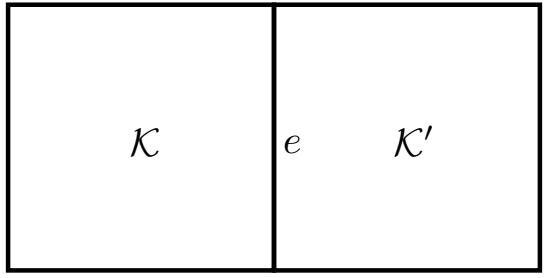} \\
  \caption{A schematic diagram of two elements $\mathcal{K}$ and $\mathcal{K}^{\prime}$ with $\mathcal{K} \bigcap \mathcal{K}^{\prime} = e$.}\label{Fig_ele2}
\end{figure}

Similar to the 1D case \cite{Shao2006}, we can establish the following entropy inequality or $L^{2}$ stability 
of the 2D semi-discrete DG method \eqref{2dweak_rk}, which implies that the discrete total charge
$Q_{h} \left( t \right) = \int_{\Omega} \left| \bm{u}_{h} \right|^{2} \mathrm{d}x \mathrm{d}y = \int_{\Omega} \sum_{\ell=1}^{4} \left| u_{\ell,h} \right|^{2} \mathrm{d}x \mathrm{d}y$
dose not increase with respect to $t$.
\begin{proposition}\label{Pro_entropy}
  If  $Q_{h} \left( 0 \right) < +\infty$, then
  under the homogeneous Dirichlet boundary conditions ($u_{\ell,h}=0$ on $\partial \Omega$), the solution to the scheme \eqref{2dweak_rk} and \eqref{LFflux}  satisfies $\frac{\mathrm{d}}{\mathrm{d}t} Q_{h} \left( t \right) \leq 0$, or $Q_{h} \left( t \right) \leq Q_{h} \left( 0 \right)$ for any $t \geq 0$.
\end{proposition}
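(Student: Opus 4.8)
The plan is to test the weak formulation \eqref{2dweak_rk} against $\bm{v}_h=\bm{u}_h$, sum the four resulting scalar identities over the components $\ell=1,\dots,4$, and then sum over all elements $\mathcal{K}\in\mathcal{T}_h$. Summing the Hadamard product $(\partial_t\bm{u}_h)\circ\bm{u}_h$ over its components gives $\tfrac12\partial_t|\bm{u}_h|^2$, so the left-hand side collapses to $\tfrac12\tfrac{\mathrm{d}}{\mathrm{d}t}Q_h(t)$. The first thing I would record is that the nonlinear source drops out entirely: the componentwise sum of $\bm{\mathcal{M}}(\bm{u}_h)\circ\bm{u}_h$ equals $g(\rho)\,\bm{u}_h^\top\gamma\bm{u}_h$, and since $\gamma$ is skew-symmetric while $\bm{u}_h$ is real-valued, this quantity vanishes identically. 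This is precisely what removes the nonlinearity from the energy balance and reduces the problem to a linear-flux estimate.

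Next I would treat the interior (volume) flux term. Because $\alpha$ and $\beta$ are real symmetric matrices, the componentwise sum of $\bm{f}(\bm{u}_h)\,\nabla\bm{u}_h$ is the pure divergence $\tfrac12\bigl[\partial_x(\bm{u}_h^\top\alpha\bm{u}_h)+\partial_y(\bm{u}_h^\top\beta\bm{u}_h)\bigr]$. Applying the divergence theorem on each $\mathcal{K}$ converts it into the edge integral $\tfrac12\int_{\partial\mathcal{K}}(\bm{u}_h^-)^\top A_{\bm{n}}\,\bm{u}_h^-\,\mathrm{d}S$, where I set $A_{\bm{n}}:=n_{e\mathcal{K},1}\alpha+n_{e\mathcal{K},2}\beta$, so that $A_{\bm{n}}$ is symmetric and $\bm{f}(\bm{u}_h)\bm{n}_{e\mathcal{K}}^\top=A_{\bm{n}}\bm{u}_h$.

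The heart of the proof is the edge-by-edge bookkeeping. For an interior edge $e$ shared by $\mathcal{K}$ and its neighbor $\mathcal{K}'$, I would group the two volume-boundary contributions $\tfrac12(\bm{u}_h^-)^\top A_{\bm{n}}\bm{u}_h^-$ and $-\tfrac12(\bm{u}_h^+)^\top A_{\bm{n}}\bm{u}_h^+$ (the sign flip coming from $\bm{n}_{e\mathcal{K}'}=-\bm{n}_{e\mathcal{K}}$) together with the flux terms $-\widehat{\bm{h}}_{e\mathcal{K}}\cdot\bm{u}_h^-$ and $-\widehat{\bm{h}}_{e\mathcal{K}'}\cdot\bm{u}_h^+$. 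Using \eqref{hhat_relation} to replace $\widehat{\bm{h}}_{e\mathcal{K}'}$ by $-\widehat{\bm{h}}_{e\mathcal{K}}$ and inserting the explicit Lax--Friedrichs form \eqref{LFflux}, namely $\widehat{\bm{h}}_{e\mathcal{K}}^{\mathrm{LF}}=\tfrac12 A_{\bm{n}}(\bm{u}_h^-+\bm{u}_h^+)-\tfrac12(\bm{u}_h^+-\bm{u}_h^-)$, the symmetric quadratic pieces cancel exactly (this uses $A_{\bm{n}}^\top=A_{\bm{n}}$), and the net contribution of $e$ reduces to $-\tfrac12\int_e|\bm{u}_h^+-\bm{u}_h^-|^2\,\mathrm{d}S\le0$. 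On a boundary edge the homogeneous Dirichlet condition sets $\bm{u}_h^+=\bm{0}$, and the identical computation leaves $-\tfrac12\int_e|\bm{u}_h^-|^2\,\mathrm{d}S\le0$.

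Summing all edge contributions then yields $\tfrac12\tfrac{\mathrm{d}}{\mathrm{d}t}Q_h(t)\le0$, hence $Q_h(t)\le Q_h(0)$ for every $t\ge0$. I expect the only delicate step to be this reorganization over edges: one must consistently track that the outward normals of $\mathcal{K}$ and $\mathcal{K}'$ are opposite (so $A_{\bm{n}}$ changes sign across $e$) and that the interior and exterior traces exchange roles between the two elements. Once this is handled, the cancellation of the $A_{\bm{n}}$-quadratic terms and the emergence of the negative jump square are automatic, and the dissipation coefficient $1$ in \eqref{LFflux} guarantees the correct sign.
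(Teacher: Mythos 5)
Your proof is correct and follows essentially the same route as the paper's: testing \eqref{2dweak_rk} with $\bm{u}_h$ itself, dropping the nonlinear term via the skew-symmetry of $\gamma$, converting the volume flux term to edge integrals, and combining the two sides of each interior edge with \eqref{hhat_relation} and \eqref{LFflux} so that the symmetric quadratic pieces cancel and only the nonpositive jump term $-\tfrac12\int_e\left|\bm{u}_h^+-\bm{u}_h^-\right|^2\mathrm{d}S$ survives. The only difference is notational: you work with the symmetric matrix $A_{\bm{n}}=n_{e\mathcal{K},1}\alpha+n_{e\mathcal{K},2}\beta$, whereas the paper carries out the identical cancellation componentwise, its key identity $\sum_{\ell}\left[\left(\bm{f}_{\ell}^{+}\cdot\bm{n}_{e\mathcal{K}}\right)u_{\ell,h}^{-}-\left(\bm{f}_{\ell}^{-}\cdot\bm{n}_{e\mathcal{K}}\right)u_{\ell,h}^{+}\right]=0$ being exactly your symmetry statement $A_{\bm{n}}^{\top}=A_{\bm{n}}$.
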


\begin{proof}
 For $\ell=1,2,3,4$, because $v_{\ell,h} \in \mathcal{V}_{h}$ is arbitrary,  we may choose $v_{\ell,h}$ in \eqref{2dweak_rk} as $u_{\ell,h}$, and  then  have
  \begin{align*}
    \int_{\mathcal{K}} \left( \partial_{t} u_{\ell,h} \right) u_{\ell,h} \mathrm{d} x \mathrm{d} y = - \sum \limits_{e \in \partial \mathcal{K}} \int_{e} \widehat{h}_{\ell,e\mathcal{K}} u_{\ell,h}^{-} \mathrm{d} S
     + \int_{\mathcal{K}} \left( \bm{f}_{\ell,h} \cdot \nabla u_{\ell,h} + \mathcal{M}_{\ell,h} u_{\ell,h} \right) \mathrm{d} x \mathrm{d} y.
  \end{align*}
  Summing up the above four equations 
    gets
  \begin{equation}\label{P1eq1}
    \frac{\mathrm{d}}{\mathrm{d} t} \int_{\mathcal{K}} \sum_{\ell=1}^{4} u_{\ell,h}^{2} \mathrm{d}x \mathrm{d}y = - \sum_{e \in \partial \mathcal{K}} \int_{e} \sum_{\ell=1}^{4} \left( 2 \widehat{h}_{\ell,e\mathcal{K}} u_{\ell,h}^{-} - \left( \bm{f}_{\ell,h}^{-} \cdot \bm{n}_{e\mathcal{K}} \right) u_{\ell,h}^{-} \right) \mathrm{d} S.
  \end{equation}
  For the edge $e$, see Figure \ref{Fig_ele2}, by noticing \eqref{LFflux}, \eqref{hhat_relation}, and $\bm{n}_{e\mathcal{K}} = -\bm{n}_{e\mathcal{K}^{\prime}}$, we have
  \begin{equation*}
    \begin{aligned}
      &\ \sum_{\ell=1}^{4} \left( 2 \widehat{h}_{\ell,e\mathcal{K}} u_{\ell,h,\mathcal{K}}^{-} - \left( \bm{f}_{\ell,\mathcal{K}}^{-} \cdot \bm{n}_{e\mathcal{K}} \right) u_{\ell,h,\mathcal{K}}^{-} \right) + \left( 2 \widehat{h}_{\ell,e\mathcal{K}^{\prime}} u_{\ell,h,\mathcal{K}^{\prime}}^{-} - \left( \bm{f}_{\ell,\mathcal{K}^{\prime}}^{-} \cdot \bm{n}_{e\mathcal{K}^{\prime}} \right) u_{\ell,h,\mathcal{K}^{\prime}}^{-} \right) \\
      = &\ \sum_{\ell=1}^{4} \left( 2 \widehat{h}_{\ell,e\mathcal{K}} u_{\ell,h,\mathcal{K}}^{-} - \left( \bm{f}_{\ell,\mathcal{K}}^{-} \cdot \bm{n}_{e\mathcal{K}} \right) u_{\ell,h,\mathcal{K}}^{-} \right) - \left( 2 \widehat{h}_{\ell,e\mathcal{K}} u_{\ell,h,\mathcal{K}}^{+} - \left( \bm{f}_{\ell,\mathcal{K}}^{+} \cdot \bm{n}_{e\mathcal{K}} \right) u_{\ell,h,\mathcal{K}}^{+} \right) \\
      = &\ \sum_{\ell=1}^{4} \left( 2 \widehat{h}_{\ell,e\mathcal{K}} \left( u_{\ell,h,\mathcal{K}}^{-} - u_{\ell,h,\mathcal{K}}^{+} \right) - \left( \bm{f}_{\ell,\mathcal{K}}^{-} \cdot \bm{n}_{e\mathcal{K}} \right) u_{\ell,h,\mathcal{K}}^{-} + \left( \bm{f}_{\ell,\mathcal{K}}^{+} \cdot \bm{n}_{e\mathcal{K}} \right) u_{\ell,h,\mathcal{K}}^{+} \right) \\
      = &\ \sum_{\ell=1}^{4} \big[ \left( \bm{f}_{\ell,\mathcal{K}}^{-} \cdot \bm{n}_{e\mathcal{K}} + \bm{f}_{\ell,\mathcal{K}}^{+} \cdot \bm{n}_{e\mathcal{K}} + u_{\ell,h,\mathcal{K}}^{-} - u_{\ell,h,\mathcal{K}}^{+} \right) \left( u_{\ell,h,\mathcal{K}}^{-} - u_{\ell,h,\mathcal{K}}^{+} \right)\\
      &\ - \left( \bm{f}_{\ell,\mathcal{K}}^{-} \cdot \bm{n}_{e\mathcal{K}} \right) u_{\ell,h,\mathcal{K}}^{-} + \left( \bm{f}_{\ell,\mathcal{K}}^{+} \cdot \bm{n}_{e\mathcal{K}} \right) u_{\ell,h,\mathcal{K}}^{+} \big] \\
      = &\ \sum_{p=1}^{4} \left[ \left( \bm{f}_{\ell,\mathcal{K}}^{+} \cdot \bm{n}_{e\mathcal{K}} \right) u_{\ell,h,\mathcal{K}}^{-} - \left( \bm{f}_{\ell,\mathcal{K}}^{-} \cdot \bm{n}_{e\mathcal{K}} \right) u_{\ell,h,\mathcal{K}}^{+} + \left( u_{\ell,h,\mathcal{K}}^{-} - u_{\ell,h,\mathcal{K}}^{+} \right)^{2} \right],
      \end{aligned}
  \end{equation*}
  here $u_{\ell,h,\mathcal{K}}^{-}$ and $u_{\ell,h,\mathcal{K}^{\prime}}^{-}$ are the {limiting} values of $u_{\ell,h}$
   from the interiors ($-$) of $\mathcal{K}$ and $\mathcal{K}^{\prime}$, respectively,
 in order to distinguish the values of $u_{\ell,h}$ from the different elements.
%
  Since $\sum \limits_{\ell=1}^{4} \left[ \left( \bm{f}_{\ell}^{+} \cdot \bm{n}_{e\mathcal{K}} \right) u_{\ell,h}^{-} - \left( \bm{f}_{\ell}^{-} \cdot \bm{n}_{e\mathcal{K}} \right) u_{\ell,h}^{+} \right] = 0$, we have
  \begin{equation*}
    \sum_{\ell=1}^{4} \left\{\left( 2 \widehat{h}_{\ell,e\mathcal{K}} u_{\ell,h,\mathcal{K}}^{-} - \left( \bm{f}_{\ell,\mathcal{K}}^{-} \cdot \bm{n}_{e\mathcal{K}} \right) u_{\ell,h,\mathcal{K}}^{-} \right) + \left( 2 \widehat{h}_{\ell,e\mathcal{K}^{\prime}} u_{\ell,h,\mathcal{K}^{\prime}}^{-} - \left( \bm{f}_{\ell,\mathcal{K}^{\prime}}^{-} \cdot \bm{n}_{e\mathcal{K}^{\prime}} \right) u_{\ell,h,\mathcal{K}^{\prime}}^{-} \right)\right\} \geq 0.
  \end{equation*}
 Combining them, summing up \eqref{P1eq1} on all $\mathcal{K}$, and noticing the boundary condition yields
  \begin{equation*}
    \frac{\mathrm{d}}{\mathrm{d}t} Q_{h} \left( t \right) = \sum \limits_{\mathcal{K} \in \mathcal{T}_{h}} \int_{\mathcal{K}} \sum_{\ell=1}^{4} u_{\ell,h}^{2} \mathrm{d}x \mathrm{d}y = - \sum \limits_{\mathcal{K} \in \mathcal{T}_{h}} \sum \limits_{e \in \partial \mathcal{K}} \int_{e} \sum \limits_{\ell=1}^{4} \left( 2 \widehat{{h}}_{\ell,e\mathcal{K}} u_{\ell,h}^{-} - \left( \bm{f}_{\ell}^{-} \cdot \bm{n}_{e\mathcal{K}} \right) u_{\ell,h}^{-} \right) \mathrm{d} S \leq 0.
  \end{equation*}
  This completes the proof.
\end{proof}

For  the Cartesian grid, 
following \cite{Cockburn1989a}, we choose the following local basis functions
\begin{equation*}
  \begin{array}{l}
    v_{\mathcal{K}}^{\left( 0 \right)} \left( x,y \right) = 1,\ v_{\mathcal{K}}^{\left( 1 \right)} \left( x,y \right) = x - x_{j},\ v_{\mathcal{K}}^{\left( 2 \right)} \left( x,y \right) = y - y_{k}, \\
    v_{\mathcal{K}}^{\left( 3 \right)} \left( x,y \right) = \left( x - x_{j} \right)^{2} - \frac{\Delta x^{2}}{12},\ v_{\mathcal{K}}^{\left( 4 \right)} \left( x,y \right) = \left( x - x_{j} \right) \left( y - y_{k} \right), \\
    v_{\mathcal{K}}^{\left( 5 \right)} \left( x,y \right) = \left( y - y_{k} \right)^{2} - \frac{\Delta y^{2}}{12},\ \cdots,
  \end{array}
\end{equation*}
where $\Delta x$ and $\Delta y$ are the spatial step sizes in $x$
 and $y$ directions, respectively.
 Then  the DG approximate solutions can be expressed as
\begin{equation}\label{2dNumsol}
  \bm{u}_{h} \left( t,x,y \right) = \sum_{l=0}^{\frac{\left( q+1 \right)\left( q+2 \right)}{2} - 1} \bm{u}_{\mathcal{K}}^{\left( l \right)} \left( t \right) v_{\mathcal{K}}^{\left( l \right)} \left( x,y \right),\ \left( x,y \right) \in \mathcal{K},
\end{equation}
where $\bm{u}_{\mathcal{K}}^{\left( l \right)} \left( t \right)$ are the degrees of freedom to be determined.
Substituting them into \eqref{2dweak_rk} gives
the semi-discrete DG scheme  for the degrees of freedom
\begin{equation}\label{2dODEs}
  \frac{\mathrm{d}}{\mathrm{d}t} \bm{u}_{\mathcal{K}}^{\left( l \right)} \left( t \right) = \frac{1}{a_{\mathcal{K}}^{\left( l \right)}} \left[ - \sum_{e \in \partial \mathcal{K}} \int_{e} \widehat{\bm{h}}_{e\mathcal{K}} v_{\mathcal{K}}^{\left( l \right)} \mathrm{d} S + \int_{\mathcal{K}} \bm{f} \left( \bm{u}_{h} \right) \left( \nabla v_{\mathcal{K}}^{\left( l \right)} \right)^{\top} + \bm{\mathcal{M}} \left( \bm{u}_{h} \right) v_{\mathcal{K}}^{\left( l \right)} \mathrm{d} x \mathrm{d} y \right],
\end{equation}
where $l=0,1,\cdots,\frac{\left( q+1 \right) \left( q+2 \right)}{2} - 1$
and $a _{\mathcal{K}}^{\left( l \right)} = \int_{\mathcal{K}} \left( v _{\mathcal{K}}^{\left( l \right)} \left( x,y \right) \right)^{2} \mathrm{d} x \mathrm{d} y$.
The integrals in \eqref{2dODEs} will be calculated by
the $\left( q + 1 \right)$-point Gauss-Legendre
 quadrature in each coordinate direction.

To further discretize the system \eqref{2dODEs} in time,
 let us first rewrite it into a compact form
\begin{equation*}\label{ODE}
  \frac{\mathrm{d}}{\mathrm{d} t} \Phi \left( t \right) = \bm{\mathcal{L}} \left( \Phi \left( t \right) \right),\ t>0,
\end{equation*}
and then approximate it by utilizing the
 fourth-order non-TVD RK method
\begin{equation*}
  \left\{
  \begin{array}{l}
    \bm{\phi}^{\left(1\right)} = \Phi \left( t \right) + \frac{\tau}{2} \bm{\mathcal{L}} \left( \Phi \left( t \right) \right), \\
    \bm{\phi}^{\left(2\right)} = \Phi \left( t \right) + \frac{\tau}{2} \bm{\mathcal{L}} \left( \bm{\phi}^{\left(1\right)} \right), \\
    \bm{\phi}^{\left(3\right)} = \Phi \left( t \right) + \tau \bm{\mathcal{L}} \left( \bm{\phi}^{\left(2\right)} \right), \\
    \Phi \left( t+\tau \right) = \frac{1}{3} \left( \bm{\phi}^{\left(1\right)} + 2\bm{\phi}^{\left(2\right)}+ \bm{\phi}^{\left(3\right)} - \Phi \left( t \right) + \frac{\tau}{2} \bm{\mathcal{L}} \left( \bm{\phi}^{\left(3\right)} \right) \right),
  \end{array}
  \right.
\end{equation*}
or the third-order TVD RK method \cite{Shu1988}
\begin{equation*}
  \left\{
  \begin{array}{l}
    \bm{\phi}^{\left(1\right)} = \Phi \left( t \right) + \tau \bm{\mathcal{L}} \left( \Phi \left( t \right) \right), \\
    \bm{\phi}^{\left(2\right)} = \frac{1}{4} \left( 3 \Phi \left( t \right) + \bm{\phi}^{\left(1\right)} + \tau \bm{\mathcal{L}} \left( \bm{\phi}^{\left(1\right)} \right) \right), \\
    \Phi \left( t+\tau \right) = \frac{1}{3} \left( \Phi \left( t \right) + 2 \bm{\phi}^{\left(2\right)} + 2 \tau \bm{\mathcal{L}} \left( \bm{\phi}^{\left(2\right)} \right) \right),
  \end{array}
  \right.
\end{equation*}
where $\tau$ denotes the time step size.
 In our experiments,
 the above fourth-order RK method is used. %

\subsection{LWDG and TSDG methods}

This section proposes two other DG methods, i.e. the LWDG and TSDG methods.
Different from the above RKDG method, they are derived by first giving the one-stage fourth-order Lax-Wendroff type and the two-stage fourth-order time discretizations of the NLD equations, respectively, and then discretizing the first- and higher-order spatial derivatives by using the spatial DG approximation.
To do that,
assume that the solutions $\bm{u}$ are sufficiently smooth
 and let us not write spatial arguments for the time being so that
  the 2D NLD equation \eqref{2dNLDE4} is rewritten into the following form
\begin{equation}\label{NLDE_ODE}
  \frac{\mathrm{d}}{\mathrm{d} t} \bm{u} \left( t \right) = \bm{\mathcal{N}} \left( \bm{u} \left( t \right) \right),\ t > 0,
\end{equation}
with $\left( \mathcal{N}_{1},\mathcal{N}_{2},\mathcal{N}_{3},\mathcal{N}_{4} \right)^{\top} = \bm{\mathcal{N}} \left( \bm{u} \left( t \right) \right) := - \nabla \cdot \bm{f} \left( \bm{u} \left( t \right) \right) +  \bm{\mathcal{M}} \left( \bm{u} \left( t \right) \right)$.

\subsubsection{Four{th}-order time discretizations}

Using the Taylor series expansion in $t$ gives
\begin{equation}\label{Taylor}
  \bm{u} \left( t + \tau \right) = \bm{u} + \tau \bm{u}_{t} + \frac{\tau^{2}}{2} \bm{u}_{tt} + \frac{\tau^{3}}{6} \bm{u}_{ttt} + \frac{\tau^{4}}{24} \bm{u}_{tttt} + \mathcal{O} \left( \tau^{5} \right),
\end{equation}
where $\bm{u}_{t} = \frac{\mathrm{d} \bm{u}}{\mathrm{d} t}$. Utilizing \eqref{NLDE_ODE} yields
\begin{equation}\label{EQ:LW}
  \bm{u} \left( t + \tau \right) = \bm{u} + \tau \bm{\mathcal{N}} \left( \bm{u} \right) + \frac{\tau^{2}}{2} \bm{\mathcal{N}}_{t} \left( \bm{u} \right) + \frac{\tau^{3}}{6} \bm{\mathcal{N}}_{tt} \left( \bm{u} \right) + \frac{\tau^{4}}{24} \bm{\mathcal{N}}_{ttt} \left( \bm{u} \right) + \mathcal{O} \left( \tau^{5} \right),
\end{equation}
which will give a fourth-order accurate Lax-Wendroff type time discretization
by omitting the term $\mathcal{O} \left( \tau^{5} \right)$ and replacing $\bm{u}$
with the approximate solution.



The two-stage fourth-order accurate time discretizations  are recently studied
in \cite{Li2016,Yuan2020,Yuan2020b} and successfully applied to solving the hyperbolic partial differential equations.

Following \cite{Yuan2020}, \eqref{Taylor} can be written as
\begin{equation*}
  \bm{u} \left( t + \tau \right) = \bm{u} + \tau \bm{u}_{t} + \frac{\vartheta \tau^{2}}{2} \bm{u}_{tt} + \frac{\left( 1-\vartheta \right) \tau^{2}}{2} \left( \bm{u} + \frac{\tau \bm{u}_{t}}{3 \left( 1 - \vartheta \right)} + \frac{\tau^{2} \bm{u}_{tt}}{12 \left( 1 - \vartheta \right)} \right)_{tt} + \mathcal{O} \left( \tau^{5} \right),
\end{equation*}
where $\vartheta \neq 1$. Thanks to \eqref{NLDE_ODE}, one has
\begin{equation}\label{Taylor_TS}
  \bm{u} \left( t + \tau \right) = \bm{u} + \tau \bm{\mathcal{N}} \left( \bm{u} \right) + \frac{\vartheta \tau^{2}}{2} \bm{\mathcal{N}}_{t} \left( \bm{u} \right) + \frac{\left( 1 - \vartheta \right) \tau^{2}}{2} \bm{u}_{tt}^{\ast} + \mathcal{O} \left( \tau^{5} \right),
\end{equation}
where
\begin{equation}\label{Psistar}
  \bm{u}^{\ast} := \bm{u} + \frac{\tau  \bm{\mathcal{N}} \left( \bm{u} \right)}{3 \left( 1 - \vartheta \right)} + \frac{\tau^{2} \bm{\mathcal{N}}_{t} \left( \bm{u} \right)}{12 \left( 1 - \vartheta \right)}.
\end{equation}
The component form of  \eqref{Psistar} reads
\begin{equation}\label{Psistarcomponent}
  u_{\ell}^{\ast} = u_{\ell} + \frac{\tau \mathcal{N}_{\ell} \left( \bm{u} \right)}{3 \left( 1 - \vartheta \right)} + \frac{\tau^{2} \mathcal{N}_{\ell,t} \left( \bm{u} \right)}{12 \left( 1 - \vartheta \right)},\ \ell =1,2,3,4,
\end{equation}
where $(\bullet)_{\ell,t}$ denotes $\frac{\mathrm{d} (\bullet)_{\ell}}{\mathrm{d} t}$.

The rest of the task is to approximate $\bm{u}_{tt}^{\ast}$ in \eqref{Taylor_TS}. {From \eqref{Psistarcomponent},} one has
\begin{equation*}
  u_{\ell,t}^{\ast} = \mathcal{N}_{\ell} \left( \bm{u} \right) + \frac{\tau \mathcal{N}_{\ell,t} \left( \bm{u} \right)}{3 \left( 1 - \vartheta \right)} + \frac{\tau^{2}}{12 \left( 1 - \vartheta \right)} \sum_{j=1}^{4} \left( \frac{\partial \mathcal{N}_{\ell,t} \left( \bm{u} \right)}{\partial u_{j}} \mathcal{N}_{j} \left( \bm{u} \right) \right),\ \ell =1,2,3,4.
\end{equation*}
Moreover, one has
\begin{align}
  u_{\ell,tt}^{\ast} =&\ \mathcal{N}_{\ell,t} \left( \bm{u} \right) + \frac{\tau}{3 \left( 1 - \vartheta \right)} \sum_{j=1}^{4} \left( \frac{\partial \mathcal{N}_{\ell,t} \left( \bm{u} \right)}{\partial u_{j}} \mathcal{N}_{j} \left( \bm{u} \right) \right) \notag \\
  &\ + \frac{\tau^{2}}{12 \left( 1 - \vartheta \right)} \sum_{j,k=1}^{4} \left( \frac{\partial^{2} \mathcal{N}_{\ell,t} \left( \bm{u} \right)}{\partial u_{j} \partial u_{k}} \mathcal{N}_{k} \left( \bm{u} \right) \mathcal{N}_{j} \left( \bm{u} \right) + \frac{\partial \mathcal{N}_{\ell,t} \left( \bm{u} \right)}{\partial u_{j}} \frac{\partial \mathcal{N}_{j} \left( \bm{u} \right)}{\partial u_{k}} \mathcal{N}_{k} \left( \bm{u} \right) \right). \label{Psittstar}
\end{align}
On the other hand, for $\ell =1,\cdots,4$, using the Taylor series expansion gives
\begin{equation*}
  \mathcal{N}_{\ell,t} \left( \bm{u}^{\ast} \right) = \mathcal{N}_{\ell,t} \left( \bm{u} \right) + \sum_{j=1}^{4} \frac{\partial \mathcal{N}_{\ell,t} \left( \bm{u} \right)}{\partial u_{j}} \left( u_{j}^{\ast} - u_{j} \right) + \frac{1}{2} \sum_{j,k=1}^{4} \frac{\partial^{2} \mathcal{N}_{\ell,t} \left( \bm{u} \right)}{\partial u_{j} \partial u_{k}} \left( u_{j}^{\ast} - u_{j} \right) \left( u_{k}^{\ast} - u_{k} \right) + \mathcal{O} \left( \tau^{3} \right).
\end{equation*}
Comparing it to \eqref{Psittstar} {and noticing \eqref{Psistarcomponent}} yields
\begin{align*}
  u_{\ell,tt}^{\ast} - \mathcal{N}_{\ell,t} \left( \bm{u}^{\ast} \right) =&\ \left( \frac{\tau^{2}}{12 \left( 1 - \vartheta \right)} - \frac{\tau^{2}}{18 \left( 1 - \vartheta \right)^{2}} \right) \sum_{j,k=1}^{4} \left( \frac{\partial^{2} \mathcal{N}_{\ell,t} \left( \bm{u} \right)}{\partial u_{j} \partial u_{k}} \mathcal{N}_{k} \left( \bm{u} \right) \mathcal{N}_{j} \left( \bm{u} \right) \right) + \mathcal{O} \left( \tau^{3} \right) \notag \\
  =&\ \frac{\tau^{2}}{18 \left( 1 - \vartheta \right)^{2}} \left( \frac{3 \left( 1 - \vartheta \right)}{2} - 1 \right) \sum_{j,k=1}^{4} \left( \frac{\partial^{2} \mathcal{N}_{\ell,t} \left( \bm{u} \right)}{\partial u_{j} \partial u_{k}} \mathcal{N}_{k} \left( \bm{u} \right) \mathcal{N}_{j} \left( \bm{u} \right) \right) + \mathcal{O} \left( \tau^{3} \right).
\end{align*}
 Hence, if
\begin{equation}\label{TSrelation2}
  \vartheta =\frac13+{\mathcal O}(\hat{\tau}),
\end{equation}
where $\hat{\tau} := \tau^{\nu}$ and $\nu \geq 1$, then
\begin{equation*}
  \bm{u}_{tt}^{\ast} = \bm{\mathcal{N}}_{t} \left( \bm{u}^{\ast} \right) + \mathcal{O} \left( \tau^{3} \right).
\end{equation*}
Substituting it into \eqref{Taylor_TS} gives
\begin{equation*}
  \bm{u} \left( t + \tau \right) = \bm{u} + \tau \bm{\mathcal{N}} + \frac{\vartheta \tau^{2}}{2} \bm{\mathcal{N}}_{t} + \frac{\left( 1 - \vartheta \right) \tau^{2}}{2} \bm{\mathcal{N}}_{t} \left( \bm{u}^{\ast} \right) + \mathcal{O} \left( \tau^{5} \right).
\end{equation*}

Based on the above discussion, an explicit two-stage fourth-order accurate time discretization can be given as follows.
\begin{description}
  \item[Stage 1.] Calculate the intermediate value
  \begin{equation}\label{Taylor_TS_1}
    \bm{u}^{\ast} = \bm{u} \left( t \right) + \frac{\tau}{3 \left( 1 - \vartheta \right)} \bm{\mathcal{N}} \left( \bm{u} \left( t \right) \right) + \frac{\tau^{2}}{12 \left( 1 - \vartheta \right)} \bm{\mathcal{N}}_{t} \left( \bm{u} \left( t \right) \right),
  \end{equation}
  \item[Stage 2.] Compute the solution at time level $t + \tau$, i.e.,
  \begin{equation}\label{Taylor_TS_2}
    \bm{u} \left( t+\tau \right) = \bm{u} \left( t \right) + \tau \bm{\mathcal{N}} \left( \bm{u} \left( t \right) \right) + \frac{\vartheta \tau^{2}}{2} \bm{\mathcal{N}}_{t} \left( \bm{u} \left( t \right) \right) + \frac{\left( 1 - \vartheta \right) \tau^{2}}{2}  \bm{\mathcal{N}}_{t} \left( \bm{u}^{\ast} \right),
  \end{equation}
\end{description}
where $\vartheta = \vartheta \left( \hat{\tau} \right)\neq 1$ satisfies  \eqref{TSrelation2}. 
A more general discussion of the two-stage fourth-order accurate time discretization
can be found in \cite{Yuan2020b}.
Without loss of generality, $\vartheta$ is taken as $\frac{1}{3}$ in the   numerical experiments in Section \ref{Sec_results}.

\subsubsection{Spatial discretizations}

This subsection gives the LWDG and TSDG methods based on the
one-stage fourth-order Lax-Wendroff type time discretization   and the two-stage fourth-order time discretization  for the NLD equation \eqref{2dNLDE4}.

Applying the previous fourth-order Lax-Wendroff type time discretization to the NLD equation \eqref{2dNLDE4} gives
\begin{equation}\label{Taylorlw_NLDE}
  \bm{u} \left( t+\tau,x,y \right) = \bm{u} \left( t,x,y \right) - \tau \left( \nabla \cdot \bm{\mathcal{F}} \left( \bm{u} \left( t,x,y \right) \right) - \bm{\mathcal{G}} \left( \bm{u} \left( t,x,y \right) \right) \right),
\end{equation}
where
\begin{align}
  \bm{\mathcal{F}} \left( \bm{u} \right) &= \bm{f} \left( \bm{u} \right) + \frac{\tau}{2} \partial_{t} \bm{f} \left( \bm{u} \right) + \frac{\tau^{2}}{6} \partial_{tt} \bm{f} \left( \bm{u} \right) + \frac{\tau^{3}}{24} \partial_{ttt} \bm{f} \left( \bm{u} \right), \label{F_u} \\
  \bm{\mathcal{G}} \left( \bm{u} \right) &=  \bm{\mathcal{M}} \left( \bm{u} \right) + \frac{\tau}{2} \partial_{t}  \bm{\mathcal{M}} \left( \bm{u} \right) + \frac{\tau^{2}}{6} \partial_{tt}  \bm{\mathcal{M}} \left( \bm{u} \right) + \frac{\tau^{3}}{24} \partial_{ttt}  \bm{\mathcal{M}} \left( \bm{u} \right). \label{G_u}
\end{align}
%
  To calculate $\bm{\mathcal{F}}$ and $\bm{\mathcal{G}}$, one needs to compute high-order time derivatives of $\bm{f}$ and $ \bm{\mathcal{M}}$. With the help of   \eqref{2dNLDE4comp}, 
  those time derivatives can be replaced with the spatial derivatives of $\bm{u}$, {see \ref{Appendix_details} for details.}

The LWDG method is to seek the approximate solution $\bm{u}_{h} \left( t,x,y \right)$ with each component $u_{\ell,h} \in \mathcal{V}_{h}$   for any $t$, such that for $v_{\ell,h} \in \mathcal{V}_{h}$, $\bm{u}_{h} \left( t,x,y \right)$ satisfies
\begin{align*}
  \int_{\mathcal{K}} \bm{u}_{h} \left( t+\tau,x,y \right) \circ \bm{v}_{h} \mathrm{d} x \mathrm{d} y =&\ \int_{\mathcal{K}} \bm{u}_{h} \circ \bm{v}_{h} \mathrm{d} x \mathrm{d} y - \tau \sum_{e \in \partial \mathcal{K}} \int_{e} \tilde{\bm{h}}_{e\mathcal{K}} \circ \bm{v}_{h} \mathrm{d} S \\
  &\ + \tau \int_{\mathcal{K}} \left( \bm{\mathcal{F}} \left( \bm{u}_{h} \right) \nabla \bm{v_{h}} + \bm{\mathcal{G}} \left( \bm{u}_{h} \right) \circ \bm{v}_{h} \right) \mathrm{d} x \mathrm{d} y,
\end{align*}
where the numerical flux
$\widetilde{\bm{h}}_{e\mathcal{K}} = \widetilde{\bm{h}}_{e\mathcal{K}} \left( \bm{u}_{h}^{-},\bm{u}_{h}^{+} \right)$ is consistent with the continuous flux $\bm{\mathcal{F}} \left( \bm{u}_{h} \right)$ and can be taken as the Lax-Friedrichs  type flux in \eqref{LFflux} {by replacing $\bm{f}$ with $\bm{\mathcal{F}}$}.

Using the Galerkin approximation of $\bm{u}$ in \eqref{2dNumsol} gives
fully discrete LWDG scheme
\begin{equation*}\label{2dODEs_lw}
  \bm{u}_{\mathcal{K}}^{\left( l \right)} \left( t+\tau \right) = \bm{u}_{\mathcal{K}}^{\left( l \right)} \left( t \right) - \frac{\tau}{a _{\mathcal{K}}^{\left( l \right)}} \sum_{e \in \partial \mathcal{K}} \int_{e} \tilde{\bm{h}}_{e\mathcal{K}} v_{\mathcal{K}}^{\left( l \right)} \mathrm{d} S \notag + \frac{\tau}{a _{\mathcal{K}}^{\left( l \right)}} \int_{\mathcal{K}} \left[ \bm{\mathcal{F}} \left( \bm{u}_{h} \right) \left( \nabla v_{\mathcal{K}}^{\left( l \right)} \right)^{\top} + \bm{\mathcal{G}} \left( \bm{u}_{h} \right) v_{\mathcal{K}}^{\left( l \right)} \right] \mathrm{d} x \mathrm{d} y,
\end{equation*}
{where $l=0,1,\cdots,\frac{\left( q+1 \right) \left( q+2 \right)}{2} - 1$}.
%
It is worth mentioning that $\bm{\mathcal{F}} \left( \bm{u}_{h} \right)$ and $\bm{\mathcal{G}} \left( \bm{u}_{h} \right)$ are obtained by replacing $\bm{u}$ with $\bm{u}_{h}$ in  \eqref{F_u} and \eqref{G_u}, and  the spatial derivatives of $\bm{u}$ in  \ref{Appendix_details}  with those of $\bm{u}_h$ in \eqref{2dNumsol}. Moreover, just like the RKDG method in Section \ref{sect3.1:RKDG},
 the integrals in the above equation are calculated by using the $(q + 1)$-point Gauss-Legendre quadrature.

Similarly, the TSDG method is to seek the approximate solutions $\bm{u}_{h} \left( t,x,y \right)$ and $\bm{u}_{h}^{\ast} \left( t,x,y \right)$ with their components belonging to $\mathcal{V}_{h}$, such that they satisfy
\begin{equation*}\label{relation_TS_weak}
  \left\{
  \begin{array}{l}
    \int_{\mathcal{K}} \bm{u}_{h}^{\ast} \circ \bm{v}_{h} \mathrm{d} x \mathrm{d} y = \int_{\mathcal{K}} \bm{u}_{h} \circ \bm{v}_{h} \mathrm{d} x \mathrm{d} y - \frac{\tau}{3 \left( 1 - \vartheta \right)} \mathfrak{T}_{1} - \frac{\tau^{2}}{12 \left( 1 - \vartheta \right)} \mathfrak{T}_{2}, \\
    \int_{\mathcal{K}} \bm{u}_{h} \left( t + \tau,x,y \right) \circ \bm{v}_{h} \mathrm{d} x \mathrm{d} y = \int_{\mathcal{K}} \bm{u}_{h} \circ \bm{v}_{h} \mathrm{d} x \mathrm{d} y - \tau \mathfrak{T}_{1} - \vartheta \tau^{2} \mathfrak{T}_{2} - \frac{\left( 1 - \vartheta \right) \tau^{2}}{2} \mathfrak{T}_{3},
  \end{array}
  \right.
\end{equation*}
where
\begin{equation*}\label{Integralterm}
  \left\{
  \begin{array}{l}
    \mathfrak{T}_{1} = \sum_{e \in \partial \mathcal{K}} \int_{e} \widehat{\bm{h}}_{e\mathcal{K}} \circ \bm{v}_{h} \mathrm{d} S - \int_{\mathcal{K}} \left( \bm{f} \left( \bm{u}_{h} \right) \nabla \bm{v}_{h} +  \bm{\mathcal{M}} \left( \bm{u}_{h} \right) \circ \bm{v}_{h} \right) \mathrm{d} x \mathrm{d} y, \\
    \mathfrak{T}_{2} = \sum_{e \in \partial \mathcal{K}} \int_{e} \overline{\bm{h}}_{e\mathcal{K}} \circ \bm{v}_{h} \mathrm{d} S - \int_{\mathcal{K}} \left( \partial_{t} \bm{f} \left( \bm{u}_{h} \right) \nabla \bm{v}_{h} + \partial_{t}  \bm{\mathcal{M}} \left( \bm{u}_{h} \right) \circ \bm{v}_{h} \right) \mathrm{d} x \mathrm{d} y, \\
    \mathfrak{T}_{3} = \sum_{e \in \partial \mathcal{K}} \int_{e} \underline{\bm{h}}_{e\mathcal{K}} \circ \bm{v}_{h} \mathrm{d} S - \int_{\mathcal{K}} \left( \partial_{t} \bm{f} \left( \bm{u}_{h}^{\ast} \right) \nabla \bm{v}_{h} + \partial_{t}  \bm{\mathcal{M}} \left( \bm{u}_{h}^{\ast} \right) \circ \bm{v}_{h} \right) \mathrm{d} x \mathrm{d} y,
  \end{array}
  \right.
\end{equation*}
and the numerical {fluxes $\overline{\bm{h}}_{e\mathcal{K}} = \overline{\bm{h}}_{e\mathcal{K}} \left( \bm{u}_{h}^{-},\bm{u}_{h}^{+} \right)$ and $\underline{\bm{h}}_{e\mathcal{K}} = \underline{\bm{h}}_{e\mathcal{K}} \left( \bm{u}_{h}^{\ast,-}, \bm{u}_{h}^{\ast,+} \right)$ are taken as the Lax-Friedrichs type fluxes in \eqref{LFflux} by replacing $\bm{f}$ with $\partial_{t} \bm{f}$. The details of $\partial_{t} \bm{f}$ and $\partial_{t}  \bm{\mathcal{M}}$ can be found in \eqref{f_ut} and \eqref{M_ut} in \ref{Appendix_details}, respectively.

Using the Galerkin approximation  \eqref{2dNumsol} gives the
TSDG scheme
\begin{equation*}\label{relation_TS}
  \left\{
  \begin{array}{l}
    \bm{u}_{\mathcal{K}}^{\ast \left( l \right)} = \bm{u}_{\mathcal{K}}^{\left( l \right)} \left( t \right) - \frac{\tau}{3 \left( 1 - \vartheta \right) a _{\mathcal{K}}^{\left( l \right)}} \widetilde{\mathfrak{T}}_{1} - \frac{\tau^{2}}{12 \left( 1 - \vartheta \right) a _{\mathcal{K}}^{\left( l \right)}} \widetilde{\mathfrak{T}}_{2}, \\
  \bm{u}_{\mathcal{K}}^{\left( l \right)} \left( t+\tau \right) = \bm{u}_{\mathcal{K}}^{\left( l \right)} \left( t \right) - \frac{\tau}{a _{\mathcal{K}}^{\left( l \right)}} \widetilde{\mathfrak{T}}_{1} - \frac{\vartheta \tau^{2}}{a _{\mathcal{K}}^{\left( l \right)}} \widetilde{\mathfrak{T}}_{2} - \frac{\left( 1-\vartheta \right) \tau^{2}}{a _{\mathcal{K}}^{\left( l \right)}} \widetilde{\mathfrak{T}}_{3},
  \end{array}
  \right.
\end{equation*}
where $l=0,1,\cdots,\frac{\left( q+1 \right) \left( q+2 \right)}{2} - 1$,
\begin{equation*}\label{Integralterm2}
  \left\{
  \begin{array}{l}
    \widetilde{\mathfrak{T}}_{1} = \sum_{e \in \partial \mathcal{K}} \int_{e} \widehat{\bm{h}}_{e\mathcal{K}} v_{\mathcal{K}}^{\left( l \right)} \mathrm{d} S - \int_{\mathcal{K}} \left( \bm{f} \left( \bm{u}_{h} \right) \left( \nabla v_{\mathcal{K}}^{\left( l \right)} \right)^{\top} +  \bm{\mathcal{M}} \left( \bm{u}_{h} \right) v_{\mathcal{K}}^{\left( l \right)} \right) \mathrm{d} x \mathrm{d} y, \\
    \widetilde{\mathfrak{T}}_{2} = \sum_{e \in \partial \mathcal{K}} \int_{e} \overline{\bm{h}}_{e\mathcal{K}} v_{\mathcal{K}}^{\left( l \right)} \mathrm{d} S - \int_{\mathcal{K}} \left( \partial_{t} \bm{f} \left( \bm{u}_{h} \right) \left( \nabla v_{\mathcal{K}}^{\left( l \right)} \right)^{\top} + \partial_{t}  \bm{\mathcal{M}}\left( \bm{u}_{h} \right) v_{\mathcal{K}}^{\left( l \right)} \right) \mathrm{d} x \mathrm{d} y, \\
    \widetilde{\mathfrak{T}}_{3} = \sum_{e \in \partial \mathcal{K}} \int_{e} \underline{\bm{h}}_{e\mathcal{K}} v_{\mathcal{K}}^{\left( l \right)} \mathrm{d} S - \int_{\mathcal{K}} \left( \partial_{t} \bm{f} \left( \bm{u}_{h}^{\ast} \right) \left( \nabla v_{\mathcal{K}}^{\left( l \right)} \right)^{\top} + \partial_{t}  \bm{\mathcal{M}} \left( \bm{u}_{h}^{\ast} \right) v_{\mathcal{K}}^{\left( l \right)} \right) \mathrm{d} x \mathrm{d} y.
  \end{array}
  \right.
\end{equation*}
Similarly, the integrals in the above equations are calculated by the $(q + 1)$-point Gauss-Legendre quadrature.

\subsection{Computational complexity}
\label{subsction_complexity}

This subsection estimates the computational complexity  of the above three methods in one dimension,  which are denoted by $P^{q}$-LWDG method, $P^{q}$-TSDG method and $P^{q}$-RKDG method for a fix degree $q$, respectively.
A relative discussion was  given in \cite{Qiu2005} for the LWDG and the RKDG methods of the nonlinear hyperbolic conservation laws.

At each one time step, the LWDG method needs only one stage, correspondingly, the TSDG method needs two stages and the RKDG method needs four stages. Does the LWDG method   need the least CPU time, followed by the TSDG method, and is the RKDG method  the most one?
 Table \ref{Table_DG_complexity} lists  the numbers of the operations `$+/-$', `$\times / \div$' and `$=$'
 for the $P^{q}$-DG methods, $q=2,3$, where
   $G_{p} = q+1$ denotes the number of Gauss-points in Gaussian quadrature, $J$ is the number of cells in space and $N_{\tau}$
   is the number of cells in time.
   \ref{Appendix_codes} presents   pseudo codes  of  three 1D $P^{2}$-DG methods   with $\kappa = 1$.
 The results clearly shows  the LWDG method needs the most CPU time. The reason is that  it requests more computational effort in calculating high-order spatial derivatives of $\bm{u}$. 

\begin{table}[htbp]
\centering
\caption{Computational complexities of three DG schemes.}
\begin{tabular}{cccccc}
  \hline
  Schemes       & $P^{2}$-LWDG                                          & $P^{3}$-LWDG                                          \\
  \hline
  $+/-$         & $\left(\left(166G_{p}+270\right)J+220\right) N_{\tau}$ & $\left(\left(186G_{p}+294\right)J+220\right) N_{\tau}$ \\
  $\times/\div$ & $\left(\left(207G_{p}+284\right)J+256\right) N_{\tau}$ & $\left(\left(231G_{p}+308\right)J+257\right) N_{\tau}$ \\
  =             & $\left(\left(95G_{p}+192\right)J+152\right) N_{\tau}$  & $\left(\left(99G_{p}+212\right)J+152\right) N_{\tau}$  \\
  \hline
  Schemes       & $P^{2}$-TSDG                                          & $P^{3}$-TSDG                                          \\
  \hline
  $+/-$         & $\left(\left(122G_{p}+228\right)J+72\right) N_{\tau}$  & $\left(\left(154G_{p}+276\right)J+72\right) N_{\tau}$  \\
  $\times/\div$ & $\left(\left(136G_{p}+156\right)J+60\right) N_{\tau}$  & $\left(\left(168G_{p}+204\right)J+62\right) N_{\tau}$  \\
  =             & $\left(\left(104G_{p}+208\right)J+69\right) N_{\tau}$  & $\left(\left(116G_{p}+240\right)J+79\right) N_{\tau}$  \\
  \hline
  Schemes       & $P^{2}$-RKDG                                          & $P^{3}$-RKDG                                          \\
  \hline
  $+/-$         & $\left(\left(128G_{p}+260\right)J+50\right) N_{\tau}$  & $\left(\left(176G_{p}+320\right)J+50\right) N_{\tau}$  \\
  $\times/\div$ & $\left(\left(148G_{p}+152\right)J+23\right) N_{\tau}$  & $\left(\left(196G_{p}+208\right)J+25\right) N_{\tau}$  \\
  =             & $\left(\left(116G_{p}+208\right)J+57\right) N_{\tau}$  & $\left(\left(132G_{p}+240\right)J+59\right) N_{\tau}$  \\
  \hline
\end{tabular}\label{Table_DG_complexity}
\end{table}

\section{Numerical results}\label{Sec_results}
This section   conducts some 1D and 2D numerical experiments to validate the accuracy and the conservative properties
of the proposed DG methods
and to investigate some new phenomena.
%
%
Unless stated otherwise, the parameters $m$, $\lambda$ and $\kappa$ are taken as 1, $\frac{1}{2}$ and $1$, respectively, and
the 1D and 2D computational domains are  taken as $\left[ -60, 60 \right]$   and $\left[ -15, 15 \right]\times \left[ -15, 15 \right]$, respectively.

\subsection{1D case}
Some 1D examples are first considered and the time step size is given by the  CFL condition \cite{Shao2006}
\begin{equation}\label{CFL}
  \tau = \frac{\mu \Delta x}{2 q + 1},\ q = 1,2,3.
\end{equation}
In practical computations, $\mu$ is taken as 0.25.

\begin{example}[Accuracy test in 1D]\label{Example_DG_1D_accuracy}\rm
This example tests the numerical accuracy, the charge and energy  {conservations}, and the CPU time of the proposed DG methods  for the 1D  {NLD equation}
\begin{equation*}
  \partial_{t}{\Psi} + \sigma_{1}\partial_{x}{\Psi} + \mathrm{i} g \left( {\Psi}^{\ast} \sigma_{3} \Psi \right) \sigma_{3} \Psi = 0,
\end{equation*}
whose exact solutions can be found in \cite{Shao2006,Xu2015,Li2017}. The initial condition is $\Psi \left( 0,x \right) = \Psi^{tw} \left( 0,x - 5 \right)$ with $\omega = \frac{4}{5}$ and $v = -\frac{1}{5}$.
\end{example}

Tables \ref{Tab_order_LW_1d}-\ref{Tab_order_RK_1d} list the errors at $t=50$ and  corresponding convergence rates of
several DG methods. It is seen that our schemes get the theoretical order
accuracy as expected. 

\begin{table}[htbp]
  \centering
  \caption{Example \ref{Example_DG_1D_accuracy}: Accuracy test of the 1D LWDG methods.}
  \begin{tabular}{cccccc}
    \hline
    Schemes & $J$ & $L^{2}$ error & order & $L^{\infty }$ error & order \\
    \hline
    $P^{1}$-LWDG &  200 & 1.3599e-01 & -    & 6.7381e-02 & -    \\
                 &  400 & 2.0073e-02 & 2.76 & 1.0415e-02 & 2.69 \\
                 &  800 & 3.2345e-03 & 2.63 & 1.7312e-03 & 2.59 \\
                 & 1600 & 6.3411e-04 & 2.35 & 3.3198e-04 & 2.38 \\
    \hline
    $P^{2}$-LWDG &  100 & 4.6881e-02 & -    & 2.1944e-02 & -    \\
                 &  200 & 4.5786e-03 & 3.36 & 2.1371e-03 & 3.36 \\
                 &  400 & 5.2971e-04 & 3.11 & 2.4760e-04 & 3.11 \\
                 &  800 & 6.4815e-05 & 3.03 & 3.0277e-05 & 3.03 \\
    \hline
    $P^{3}$-LWDG &  100 & 1.6810e-03 & -    & 7.7977e-04 & -    \\
                 &  200 & 6.5653e-05 & 4.68 & 3.1363e-05 & 4.64 \\
                 &  400 & 2.7559e-06 & 4.57 & 1.3168e-06 & 4.57 \\
                 &  800 & 1.3926e-07 & 4.31 & 6.6914e-08 & 4.30 \\
    \hline
  \end{tabular}
  \label{Tab_order_LW_1d}
\end{table}
\begin{table}[htbp]
  \centering
  \caption{Example \ref{Example_DG_1D_accuracy}: Accuracy test of the 1D TSDG methods.}
  \begin{tabular}{cccccc}
    \hline
    Schemes & $J$ & $L^{2}$ error & order & $L^{\infty }$ error & order \\
    \hline
    $P^{1}$-TSDG &  200 & 1.4119e-01 & -    & 6.9361e-02 & -    \\
                 &  400 & 2.0359e-02 & 2.79 & 1.0457e-02 & 2.73 \\
                 &  800 & 3.1239e-03 & 2.70 & 1.6739e-03 & 2.64 \\
                 & 1600 & 5.7759e-04 & 2.44 & 3.0768e-04 & 2.44 \\
    \hline
    $P^{2}$-TSDG &  100 & 4.4493e-02 & -    & 2.0827e-02 & -    \\
                 &  200 & 4.2889e-03 & 3.37 & 2.0050e-03 & 3.38 \\
                 &  400 & 4.9332e-04 & 3.12 & 2.3088e-04 & 3.12 \\
                 &  800 & 6.0271e-05 & 3.03 & 2.8196e-05 & 3.03 \\
    \hline
    $P^{3}$-TSDG &  100 & 1.4870e-03 & -    & 6.9071e-04 & -    \\
                 &  200 & 5.6135e-05 & 4.73 & 2.6922e-05 & 4.68 \\
                 &  400 & 2.3230e-06 & 4.59 & 1.1161e-06 & 4.59 \\
                 &  800 & 1.1616e-07 & 4.32 & 5.7802e-08 & 4.27 \\
    \hline
  \end{tabular}
  \label{Tab_order_TS_1d}
\end{table}
\begin{table}[htbp]
  \centering
  \caption{Example \ref{Example_DG_1D_accuracy}: Accuracy test of the 1D RKDG methods.}
  \begin{tabular}{cccccc}
    \hline
    Schemes & $J$ & $L^{2}$ error & order & $L^{\infty }$ error & order \\
    \hline
    $P^{1}$-RKDG &  200 & 1.6410e-01 & -    & 7.7104e-02 & -    \\
                 &  400 & 2.1730e-02 & 2.92 & 1.0370e-02 & 2.89 \\
                 &  800 & 2.7526e-03 & 2.98 & 1.3517e-03 & 2.94 \\
                 & 1600 & 3.4993e-04 & 2.98 & 1.8119e-04 & 2.90 \\
    \hline
    $P^{2}$-RKDG &  100 & 1.8467e-02 & -    & 8.7084e-03 & -    \\
                 &  200 & 6.6117e-04 & 4.80 & 3.4908e-04 & 4.64 \\
                 &  400 & 3.1685e-05 & 4.38 & 1.9083e-05 & 4.19 \\
                 &  800 & 3.1187e-06 & 3.34 & 1.9758e-06 & 3.27 \\
    \hline
    $P^{3}$-RKDG &  100 & 2.7938e-04 & -    & 1.7751e-04 & -    \\
                 &  200 & 8.9008e-06 & 4.97 & 6.4287e-06 & 4.79 \\
                 &  400 & 5.4281e-07 & 4.04 & 3.9350e-07 & 4.03 \\
                 &  800 & 3.3929e-08 & 4.00 & 2.4663e-08 & 4.00 \\
    \hline
  \end{tabular}
  \label{Tab_order_RK_1d}
\end{table}

 Let us further investigate the performance of the numerical schemes in the charge and energy conservations.
%
Figure \ref{Figure_DG_1D_conservation} shows the time evolution of the relative charge  and  energy differences  defined by
\begin{equation*}
  Q_{\mathrm{rela}} \left( t \right) = \left| \frac{Q \left( t \right) - Q \left( 0 \right)}{Q \left( 0 \right)} \right|,\ E_{\mathrm{rela}} \left( t \right) = \left| \frac{E \left( t \right) - E \left( 0 \right)}{E \left( 0 \right)} \right|,
\end{equation*}
 {with} $J = 1000$. The results show that the present three DG methods can conserve the discrete charge and the discrete energy approximately.

\begin{figure}[htbp]
  \centering
  \includegraphics[width=0.4\textwidth]{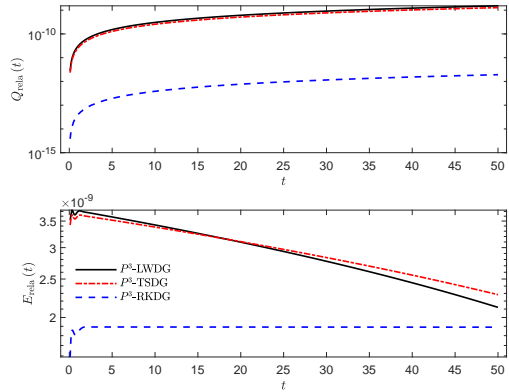} \\
  \caption{Example \ref{Example_DG_1D_accuracy}: Time evolution of the relative charge  and energy differences.}
  \label{Figure_DG_1D_conservation}
\end{figure}


 Finally, we test the CPU time executed by MATLAB and C++ according to the pseudo codes.
 The parameters are the same as those in the accuracy test except for $\mu = 0.5$.
%
Tables \ref{Table_DG_1D_CPU_MATLAB} and   \ref{Table_DG_1D_CPU_cpp} record the CPU times for different schemes by MATLAB and C++ respectively, those data are the average values of five calculations in order to reduce the error, and the output time is taken as $t = 0.005$. It is seen  that when $J$ is large enough, those numerical results are consistent with the analysis in Section \ref{subsction_complexity}.

\begin{table}[htbp]
  \centering
  \caption{Example \ref{Example_DG_1D_accuracy}: CPU times (second) executed by MATLAB for different schemes.}
  \begin{tabular}{ccccc}
    \hline
    Schemes$\setminus J$ & 100000  & 200000 & 400000  & 800000 \\
    \hline
    $P^{2}$-LWDG & 4.77 & 23.07 & 92.11 & 369.73 \\
    $P^{2}$-TSDG & 4.06 & 20.41 & 82.37 & 330.87 \\
    $P^{2}$-RKDG & 3.99 & 21.21 & 85.95 & 343.80 \\
    \hline
    $P^{3}$-LWDG & 8.76 & 39.68 & 158.02 & 633.73 \\
    $P^{3}$-TSDG & 7.60 & 37.57 & 150.40 & 600.27 \\
    $P^{3}$-RKDG & 7.59 & 38.62 & 155.49 & 622.40 \\
    \hline
  \end{tabular}\label{Table_DG_1D_CPU_MATLAB}
\end{table}

\begin{table}[htbp]
  \centering
  \caption{Example \ref{Example_DG_1D_accuracy}: CPU times (second) executed by C++ for different schemes.}
  \begin{tabular}{ccccc}
    \hline
    Schemes$\setminus J$ & 100000  & 200000 & 400000  & 800000 \\
    \hline
    $P^{2}$-LWDG & 11.46 & 46.11 & 183.62 & 727.79 \\
    $P^{2}$-TSDG & 9.21  & 37.51 & 150.48 & 593.24 \\
    $P^{2}$-RKDG & 9.67  & 39.29 & 156.26 & 619.45 \\
    \hline
    $P^{3}$-LWDG & 21.24 & 84.61 & 335.62 & 1344.08 \\
    $P^{3}$-TSDG & 19.75 & 79.00 & 314.68 & 1244.57 \\
    $P^{3}$-RKDG & 20.85 & 83.40 & 332.78 & 1324.90 \\
    \hline
  \end{tabular}\label{Table_DG_1D_CPU_cpp}
\end{table}

\begin{example}[Error history]\label{Example_DG_errorhistory}\rm
  The $L^{\infty}$-error history is investigated in this example.
  The   {NLD equation} and parameters are the same as those in Example \ref{Example_DG_1D_accuracy} except for $J = 500$
  and
  the initial condition  $\Psi \left( 0,x \right) = \Psi^{sw} \left( 0,x \right)$.
\end{example}

Figure \ref{Fig_history} shows the time evolution of the $L^{\infty}$-errors for different methods from $t = 0$ to $ 3000$,
where  we fit the curves linearly and list the resulting slopes.
 Relatively speaking, the RKDG methods perform better than the other two in a long time simulation.
 A similar result between the LWDG  and   RKDG methods for the linear advection problem is observed in \cite{Guo2015}.
\begin{figure}[htbp]
  \centering
  \includegraphics[width=0.4\textwidth]{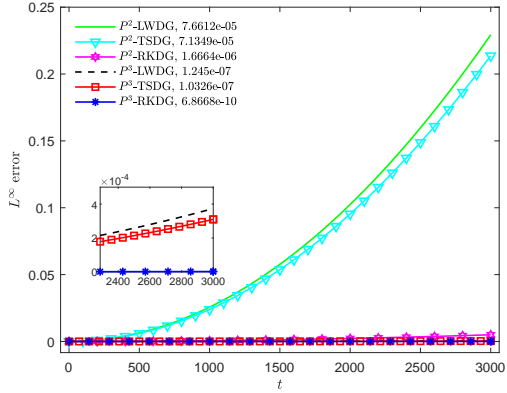}\\
  \caption{Example \ref{Example_DG_errorhistory}: $L^{\infty}$-errors from $t=0$ to $3000$ for different numerical methods.}
  \label{Fig_history}
\end{figure}

As shown in  Figure \ref{Fig_history}, the RKDG method performs better relatively than the other two methods in a long time simulation, so we use the $P^{3}$-RKDG method to simulate the following examples except for Example \ref{Example_DG_2D_accuracy}.

\begin{example}[Collision]\label{Example_DG_1D_collision}
  The inelastic interaction in the binary collision and the ternary collision has been observed in \cite{Alvarez1981,Shao2006}. This example  tries to observe the inelastic interaction in the quaternary collision. The initial data are taken as the linear superposition of four waves, that is, $\Psi \left( 0,x\right) = \Psi^{tw} \left( 0,x + 15 \right) + \Psi^{tw} \left( 0,x + 5 \right) + \Psi^{tw} \left( 0,x - 5 \right) + \Psi^{tw} \left( 0,x - 15 \right)$. Table \ref{Table_DG_1D_quaternary} lists the parameters. The spatial domain is taken as $\left[ -70,70 \right]$, divided into $J = 1400$ cells.
\end{example}
%
\begin{table}[htbp]
  \centering
  \caption{Example \ref{Example_DG_1D_collision}: Parameters in the quaternary collision.}
  \begin{tabular}{cccccc} \hline
  & $\Psi^{tw} \left( 0,x + 15 \right)$ & $\Psi^{tw} \left( 0,x + 5 \right)$ & $\Psi^{tw} \left( 0,x - 5 \right)$ & $\Psi^{tw} \left(0,x - 15 \right)$ \\
  \hline
  $v$      & $1/5$ & $1/10$ & $-1/10$ & $-1/5$ \\
  $\omega$ & $3/5$ & $4/5$  & $4/5$   & $3/5$  \\
  \hline
  \end{tabular}
  \label{Table_DG_1D_quaternary}
\end{table}

Numerical results in Figure \ref{Figure_DG_1D_collision} shows   the inelastic interaction   in the quaternary collision
and the charge decreasing property   with time. The latter  is consistent with Proposition \ref{Pro_entropy}.

\begin{figure}[htbp]
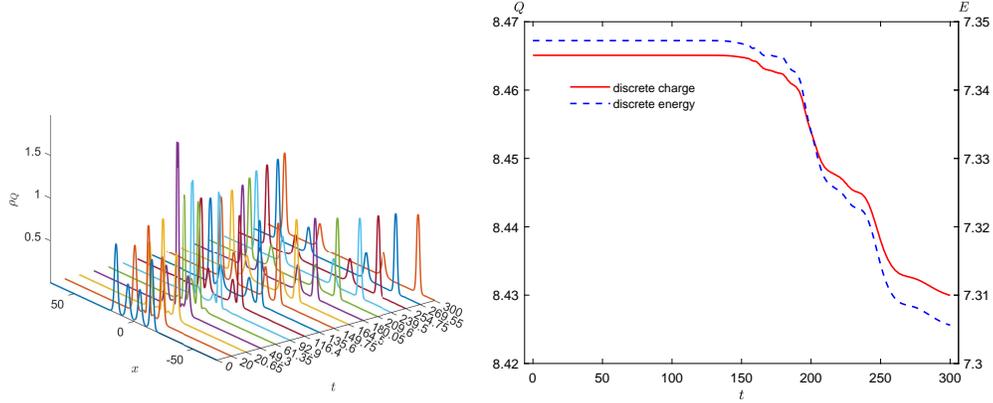

  \centering
  \includegraphics[width=0.4\textwidth]{inter_RKDG_Case3_kappa1_L100_dx20.pdf}
  \includegraphics[width=0.4\textwidth]{inter_RKDG_QE_Case3_kappa1_L100_dx20.pdf} \\
  \caption{Example \ref{Example_DG_1D_collision}: Inelastic interaction in the quaternary collision. Time evolution of the charge density (left) and the discrete charge and the discrete energy (right).}
  \label{Figure_DG_1D_collision}
\end{figure}

\subsection{2D case}
This section investigates some 2D examples for the case of $S = 0$.
%
%
Unless stated otherwise,   we set $\Delta x = \Delta y = 0.2$, and the time step size is given by the following condition
\begin{equation*}
  \tau = \frac{\mu \min\{ h_{x},h_{y} \}}{2 \left( 2 q + 1 \right)},\ q = 1,2,3,
\end{equation*}
with $\mu=  0.25$ for the $P^{3}$-LWDG method and 0.5 for the other methods.

\begin{example}[Accuracy test]\label{Example_DG_2D_accuracy}\rm
This example tests the accuracy   of our DG methods. 
 In order to do that, we add a source term $R = R \left( t,x,y \right) = \left( r_{1} \left( t,x,y \right),r_{2} \left( t,x,y \right) \right)^{\top} \in \mathbb{C}^{2}$ into the 2D  {NLD equation} \eqref{2dNLDE} as
\begin{equation}\label{2dNLDE_accoracy}
  \partial_{t} \Psi + \sigma _{1} \partial_{x} \Psi + \sigma _{2} \partial_{y} \Psi + \mathrm{i} g \left( \Psi^{\ast} \sigma_{3} \Psi \right) \sigma_{3} \Psi = R,
\end{equation}
 so that the exact solutions of \eqref{2dNLDE_accoracy} can be taken as $\psi_{p} = c_{p} \varphi \left( t,x,y \right)$ with the constant complex number  $c_{p}$, $p=1,2$. It is the so-called   method of manufactured solutions.
 The initial condition is taken as $\left[ c_{1} \varphi \left( 0,x,y \right), c_{2} \varphi \left( 0,x,y \right) \right]^{\top}$ with $c_{1} = 1$, $c_{2} = 2$, and the exact solution being $\varphi \left( t,x,y \right) = t^{4} \mathrm{e}^{-5 \left( x^{2} + y^{2} \right)}$,
 and the spatial domain is taken as $\left[ -2,2 \right]^{2}$.
\end{example}

Tables \ref{Table_LWDG_2D_accuracy}-\ref{Table_RKDG_2D_accuracy} list the errors at $t = 0.2$ and   corresponding convergence rates, which  are consistent with the expected.

\begin{table}[htbp]
  \centering
  \caption{Example \ref{Example_DG_2D_accuracy}: Accuracy test of the 2D LWDG methods.}
  \begin{tabular}{cccccc}
    \hline
    Schemes & $J \times K$ & $L^{2}$ error & order & $L^{\infty }$ error & order \\
    \hline
    $P^{1}$-LWDG &  40$\times$ 40 & 9.1886e-03 & -    & 1.9059e-02 & -    \\
                 &  80$\times$ 80 & 2.3082e-03 & 1.99 & 4.7100e-03 & 2.02 \\
                 & 160$\times$160 & 5.7818e-04 & 2.00 & 1.1876e-03 & 1.99 \\
                 & 320$\times$320 & 1.4469e-04 & 2.00 & 2.9750e-04 & 2.00 \\
    \hline
    $P^{2}$-LWDG &  20$\times$ 20 & 3.9147e-03 & -    & 7.5917e-03 & -    \\
                 &  40$\times$ 40 & 4.8257e-04 & 3.02 & 9.4199e-04 & 3.01 \\
                 &  80$\times$ 80 & 5.9930e-05 & 3.01 & 1.1967e-04 & 2.98 \\
                 & 160$\times$160 & 7.4856e-06 & 3.00 & 1.5132e-05 & 2.98 \\
    \hline
    $P^{3}$-LWDG &  20$\times$ 20 & 4.6905e-04 & -    & 1.7709e-03 & -    \\
                 &  40$\times$ 40 & 3.2684e-05 & 3.84 & 1.4145e-04 & 3.65 \\
                 &  80$\times$ 80 & 2.1343e-06 & 3.94 & 9.8153e-06 & 3.85 \\
                 & 160$\times$160 & 1.4042e-07 & 3.93 & 6.2605e-07 & 3.97 \\
    \hline
  \end{tabular}
  \label{Table_LWDG_2D_accuracy}
\end{table}
\begin{table}[htbp]
  \centering
  \caption{Example \ref{Example_DG_2D_accuracy}: Accuracy test of the 2D TSDG methods.}
  \begin{tabular}{cccccc}
    \hline
    Schemes & $J \times K$ & $L^{2}$ error & order & $L^{\infty }$ error & order \\
    \hline
    $P^{1}$-TSDG &  40$\times$ 40 & 9.1237e-03 & -    & 1.8786e-02 & -    \\
                 &  80$\times$ 80 & 2.2918e-03 & 1.99 & 4.5573e-03 & 2.04 \\
                 & 160$\times$160 & 5.7418e-04 & 2.00 & 1.1485e-03 & 1.99 \\
                 & 320$\times$320 & 1.4371e-04 & 2.00 & 2.8793e-04 & 2.00 \\
    \hline
    $P^{2}$-TSDG &  20$\times$ 20 & 3.9203e-03 & -    & 7.7629e-03 & -    \\
                 &  40$\times$ 40 & 4.8322e-04 & 3.02 & 9.4513e-04 & 3.04 \\
                 &  80$\times$ 80 & 6.0058e-05 & 3.01 & 1.2176e-04 & 2.96 \\
                 & 160$\times$160 & 7.4967e-06 & 3.00 & 1.5362e-05 & 2.99 \\
    \hline
    $P^{3}$-TSDG &  20$\times$ 20 & 4.6357e-04 & -    & 1.6581e-03 & -    \\
                 &  40$\times$ 40 & 3.2458e-05 & 3.84 & 1.3339e-04 & 3.64 \\
                 &  80$\times$ 80 & 2.1220e-06 & 3.94 & 9.3929e-06 & 3.83 \\
                 & 160$\times$160 & 1.3493e-07 & 3.98 & 6.0154e-07 & 3.96 \\
    \hline
  \end{tabular}
  \label{Table_TSDG_2D_accuracy}
\end{table}
\begin{table}[htbp]
  \centering
  \caption{Example \ref{Example_DG_2D_accuracy}: Accuracy test of the 2D RKDG methods.}
  \begin{tabular}{cccccccc}
    \hline
    Schemes & $J \times K$ & $L^{2}$ error & order & $L^{\infty }$ error & order \\
    \hline
    $P^{1}$-RKDG &  40$\times$ 40 & 9.1862e-03 & -    & 1.8488e-02 & -    \\
                 &  80$\times$ 80 & 2.2727e-03 & 2.02 & 4.2929e-03 & 2.11 \\
                 & 160$\times$160 & 5.6610e-04 & 2.01 & 1.0160e-03 & 2.08 \\
                 & 320$\times$320 & 1.4138e-04 & 2.00 & 2.4606e-04 & 2.05 \\
    \hline
    $P^{2}$-RKDG &  20$\times$ 20 & 4.2264e-03 & -    & 9.6981e-03 & -    \\
                 &  40$\times$ 40 & 5.2404e-04 & 3.01 & 1.2033e-03 & 3.01 \\
                 &  80$\times$ 80 & 6.5573e-05 & 3.00 & 1.4619e-04 & 3.04 \\
                 & 160$\times$160 & 8.1943e-06 & 3.00 & 1.8016e-05 & 3.02 \\
    \hline
    $P^{3}$-RKDG &  20$\times$ 20 & 4.7686e-04 & -    & 1.9168e-03 & -    \\
                 &  40$\times$ 40 & 3.3019e-05 & 3.85 & 1.5175e-04 & 3.66 \\
                 &  80$\times$ 80 & 2.1497e-06 & 3.94 & 1.0473e-05 & 3.86 \\
                 & 160$\times$160 & 1.3604e-07 & 3.98 & 6.6968e-07 & 3.97 \\
    \hline
  \end{tabular}
  \label{Table_RKDG_2D_accuracy}
\end{table}

\begin{example}[Standing wave solutions]\label{Example_DG_2D_SW}\rm
  This example considers two standing wave solutions \cite{Cuevas-Maraver2018} of the 2D  {NLD equation}: (i) $\omega = 0.8$, (ii) $\omega = 0.12$.
\end{example}

Figures \ref{Figure_DG_2D_sw_1} and \ref{Figure_DG_2D_sw_2} show 
the charge densities of those standing wave solutions  at several different times obtained with $\mu = 0.7$.
 Figure \ref{Figure_DG_2D_sw_error} records the maximum deviations of the charge density  defined by
\begin{equation*}
  \mathrm{dev}_{\rho_{Q}} := \max_{x,y} \left\{ \left| \left| \Psi \left( t,x,y \right) \right|^{2} - \left| \Psi \left( 0,x,y \right) \right|^{2} \right| \right\}.
\end{equation*}
We see that the charge density in the first case is almost unchanged for a very long time, while for the second one, it rotates around the center from about $t = 1800$, and reaches the maximum amplitude at about $t = 2299$.
It is an interesting phenomenon  that after
a certain time, the charge density changes periodically with ``circular ring-elliptical ring-circular ring'', but  has not been observed in the literature. 

\begin{figure}[htbp]
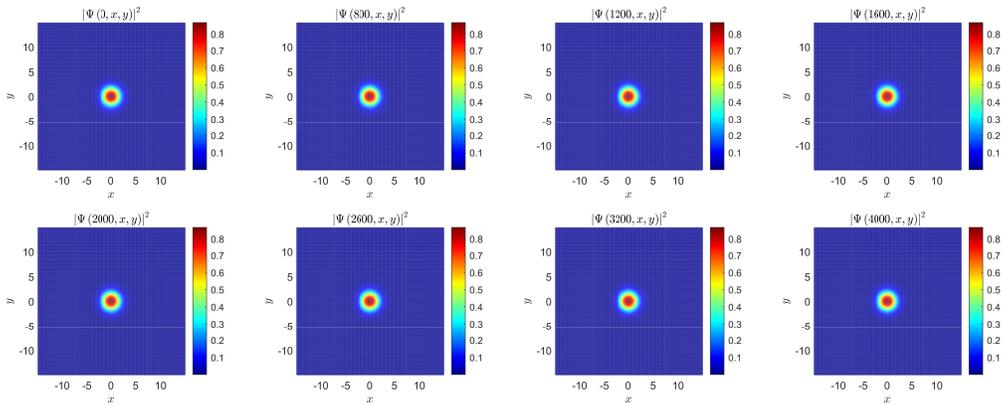

  \centering
  \includegraphics[width=0.2\textwidth]{sw_2d_1_T0.pdf}
  \includegraphics[width=0.2\textwidth]{sw_2d_1_T800.pdf}
  \includegraphics[width=0.2\textwidth]{sw_2d_1_T1200.pdf}
  \includegraphics[width=0.2\textwidth]{sw_2d_1_T1600.pdf} \\
  \includegraphics[width=0.2\textwidth]{sw_2d_1_T2000.pdf}
  \includegraphics[width=0.2\textwidth]{sw_2d_1_T2600.pdf}
  \includegraphics[width=0.2\textwidth]{sw_2d_1_T3200.pdf}
  \includegraphics[width=0.2\textwidth]{sw_2d_1_T4000.pdf} \\
  \caption{Example \ref{Example_DG_2D_SW}: Charge densities at $t = 0,800,1200,1600,2000,2600,3200,4000$, with $\omega = 0.8$.}
  \label{Figure_DG_2D_sw_1}
\end{figure}
\begin{figure}[htbp]
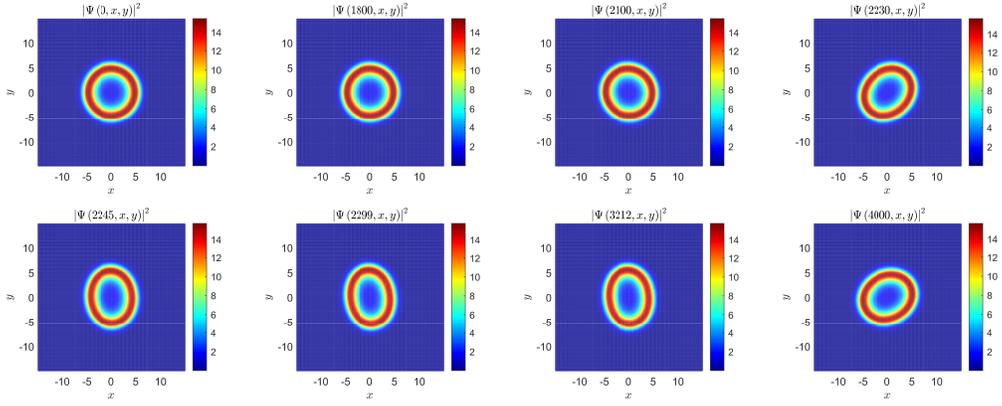

  \centering
  \includegraphics[width=0.2\textwidth]{sw_2d_2_T0.pdf}
  \includegraphics[width=0.2\textwidth]{sw_2d_2_T1800.pdf}
  \includegraphics[width=0.2\textwidth]{sw_2d_2_T2100.pdf}
  \includegraphics[width=0.2\textwidth]{sw_2d_2_T2230.pdf} \\
  \includegraphics[width=0.2\textwidth]{sw_2d_2_T2245.pdf}
  \includegraphics[width=0.2\textwidth]{sw_2d_2_T2299.pdf}
  \includegraphics[width=0.2\textwidth]{sw_2d_2_T3212.pdf}
  \includegraphics[width=0.2\textwidth]{sw_2d_2_T4000.pdf} \\
  \caption{Example \ref{Example_DG_2D_SW}: Charge densities at $t = 0,1800,2100,2230,2245,2299,3212,4000$, with $\omega = 0.12$.}
  \label{Figure_DG_2D_sw_2}
\end{figure}

\begin{figure}[htbp]
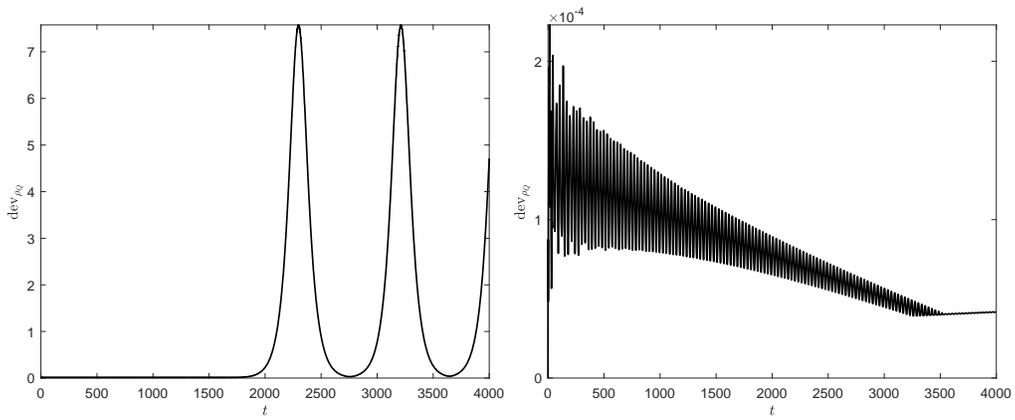

  \centering
  \includegraphics[width=0.4\textwidth]{sw_error_2d_2.pdf}
  \includegraphics[width=0.4\textwidth]{sw_error_2d_1.pdf} \\
  \caption{Example \ref{Example_DG_2D_SW}: Time evolutions of the maximum deviation of charge density for the case of $\omega = 0.12$ (left) and $\omega = 0.8$ (right).}
  \label{Figure_DG_2D_sw_error}
\end{figure}

\begin{example}[Oscillation state]\label{Example_DG_2D_oscillation}\rm
  This example investigates the interaction of two standing waves of the 2D  {NLD equation}. The initial condition is taken as the linear superposition of two standing waves, i.e., $\Psi \left( 0,x,y \right) = \Psi^{sw} \left( 0,x-2,y \right) + \Psi^{sw} \left( 0,x+2,y \right)$. The computational domain is taken as $\left[ -25.5,25.5 \right]^{2}$.
\end{example}

 Figure \ref{Figure_DG_2D_sw_3} gives the charge densities at $t = 0,7,124,234,426,433,578,600$, with $\omega = 0.8$.
  Figure \ref{Figure_masscenter_2d_3} plots the charge density at $\left( x,y \right) = \left( 0,0 \right)$ with respect to $t$.
 One can see that a long-lived oscillation state is observed.

\begin{figure}[htbp]
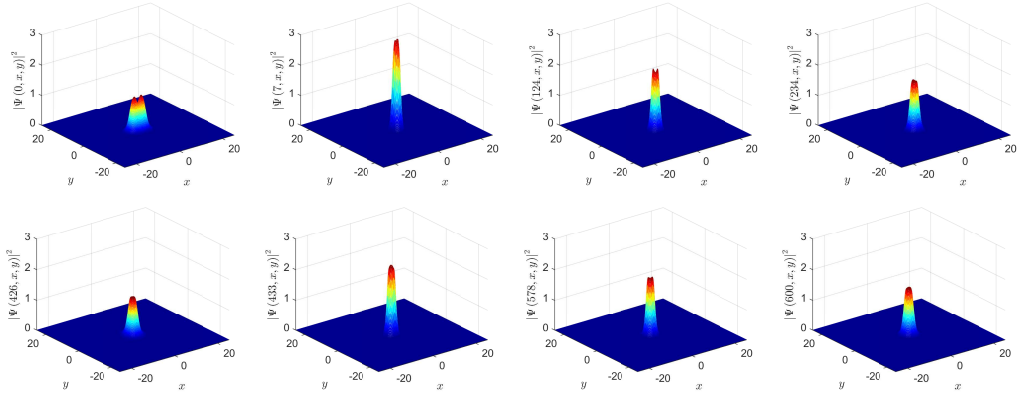

  \centering
  \includegraphics[width=0.2\textwidth]{sw_2d_3_T0.pdf}
  \includegraphics[width=0.2\textwidth]{sw_2d_3_T7.pdf}
  \includegraphics[width=0.2\textwidth]{sw_2d_3_T124.pdf}
  \includegraphics[width=0.2\textwidth]{sw_2d_3_T234.pdf} \\
  \includegraphics[width=0.2\textwidth]{sw_2d_3_T426.pdf}
  \includegraphics[width=0.2\textwidth]{sw_2d_3_T433.pdf}
  \includegraphics[width=0.2\textwidth]{sw_2d_3_T578.pdf}
  \includegraphics[width=0.2\textwidth]{sw_2d_3_T600.pdf} 
  \caption{Example \ref{Example_DG_2D_oscillation}: Charge densities at $t = 0,7,124,234,426,433,578,600$, with $\omega = 0.8$.}
  \label{Figure_DG_2D_sw_3}
\end{figure}

\begin{figure}[htbp]
  \centering
  \includegraphics[width=0.4\textwidth]{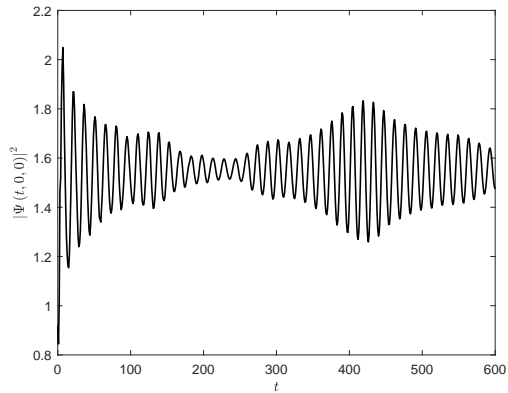} \\
  \caption{Example \ref{Example_DG_2D_oscillation}: Time evolution of the charge density $\left| \Psi \left( t,0,0 \right) \right|^{2}$.}
  \label{Figure_masscenter_2d_3}
\end{figure}

\begin{example}[Travelling wave solutions]\label{Example_DG_2D_TW}\rm
This example  simulate  two travelling wave solutions of the 2D  {NLD equation}.
The computational domain is taken as $\left[ -20,20 \right]^{2}$.
\end{example}

Figure \ref{Fig_tw_2d} shows the charge densities at several different times, reflecting the motion of the travelling waves, where
the first and second rows are for the cases of $\omega = 0.8$ with $v = - \frac{1}{10}$ and  $\omega = 0.12$ with $v = \frac{1}{10}$, respectively.

\begin{figure}[htbp]
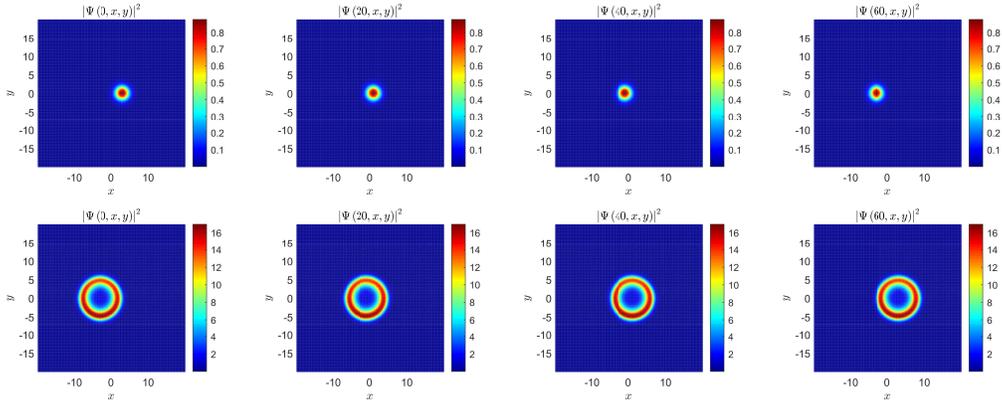

  \centering
  \includegraphics[width=0.2\textwidth]{tw_2d_1_T0.pdf}
  \includegraphics[width=0.2\textwidth]{tw_2d_1_T20.pdf}
  \includegraphics[width=0.2\textwidth]{tw_2d_1_T40.pdf}
  \includegraphics[width=0.2\textwidth]{tw_2d_1_T60.pdf} \\
  \includegraphics[width=0.2\textwidth]{tw_2d_2_T0.pdf}
  \includegraphics[width=0.2\textwidth]{tw_2d_2_T20.pdf}
  \includegraphics[width=0.2\textwidth]{tw_2d_2_T40.pdf}
  \includegraphics[width=0.2\textwidth]{tw_2d_2_T60.pdf} \\
  \caption{Example \ref{Example_DG_2D_TW}: Charge densities at $t = 0,20,40,60$ with $\omega = 0.8$, $v = -\frac{1}{10}$ (up) and $\omega = 0.12$, $v = \frac{1}{10}$ (down).}
  \label{Fig_tw_2d}
\end{figure}

\begin{example}[Breathing pattern]\label{Example_DG_2D_iso}\rm
  The last example investigates the influence of $\kappa$ on the standing wave solution of  {the} 2D {NLD equation}. 
\end{example}

The left plot of Figure \ref{Fig_sw_iso_2d} shows the isosurface (with the value of 0.1) of the charge density from $t = 0$ to $300$ with $\kappa = 2$ and
 $\omega = 0.94$. It presents a breathing pattern. For comparison,
 the right plot of Figure \ref{Fig_sw_iso_2d} gives corresponding result for the case of $\kappa = 1$, where no  breathing pattern is observed.
\begin{figure}[htbp]
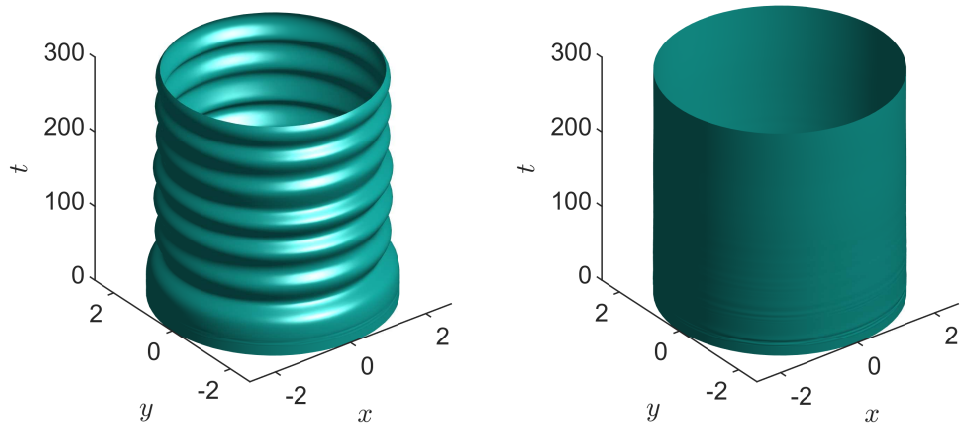

  \centering
  \includegraphics[width=0.4\textwidth]{sw_iso_2d_0.pdf}
  \includegraphics[width=0.4\textwidth]{sw_iso_2d_-1.pdf} \\
  \caption{Example \ref{Example_DG_2D_iso}: Isosurfaces of the charge density with $\omega = 0.94$. Left: $\kappa = 2$; right: $\kappa = 1$.}
  \label{Fig_sw_iso_2d}
\end{figure}


\section{Conclusion}
\label{Sec_conclusion}
Based on the Runge-Kutta time discretization, the Lax-Wendroff type time discretization, and the two-stage fourth-order time discretization,
this paper developed
three high-order accurate DG methods  for the 1D and 2D  {NLD equations} with a general scalar self-interaction, denoted respectively by RKDG, LWDG and TSDG.
The RKDG method used the spatial DG approximation
    to discretize the NLD equations and then utilized the explicit multistage Runge-Kutta time discretization for the first-order time derivatives, while the LWDG   and   TSDG methods, on the contrary, first gave the one-stage fourth-order Lax-Wendroff type and the two-stage fourth-order time discretizations of the NLD equations, respectively, and then discretized the first- and higher-order spatial derivatives by using the spatial DG approximation.
    For the 2D semi-discrete DG methods with the Lax-Friedrichs flux, we proved the $L^{2}$ stability, that is, the total charge does not increase.
    Moreover,  those three DG methods was compared:
(1) The estimation of their computational complexities in the 1D case showed
that  the computational complexity of the one-stage LWDG
 method was higher than the other two schemes. It was also verified by ous numerical experiments with MATLAB and C++. The main reason was that  the LWDG method needed to calculate the high-order spatial derivatives of the solution and  the nonlinear term, while  the TSDG method  only calculated the first-order derivatives of the nonlinear term and the RKDG method did not require to calculate the derivatives of the nonlinear term.
 (2)  {Recording} the $L^{\infty}$ error in a long time simulation  showed that the RKDG method performed relatively better  than the other two methods.
Several numerical examples were given to verify the above findings, and  the accuracy and the conservative properties of the proposed methods.
In addition, we also simulated the interaction of the 1D solitary waves and the 2D standing and travelling wave solutions. Specially, the breathing pattern was observed clearly in the case of $\kappa = 2$.
 To conduct the 2D numerical experiments, the travelling wave solutions of the 2D  {NLD equation} were given,  according to the standing wave solutions obtained in \cite{Cuevas-Maraver2018} and the Lorentz transformation. Unlike the standing wave solution, the travelling wave solution was not centrosymmetric. 

\section*{Acknowledgement}
The second author was partially supported by
the National Natural Science Foundation of China (No. 11421101).

\appendix

\section{Calculation of $\mathcal{F}$ and $\mathcal{G}$ in \eqref{F_u} and \eqref{G_u}}
\label{Appendix_details}


To calculate $\bm{\mathcal{F}} \left( \bm{u} \right)$, one needs to compute high-order (up to third-order) time derivatives of $\bm{f} \left( \bm{u} \right)$. Those time derivatives can be replaced with the spatial derivatives of $\bm{u}$, thanks to the NLD equation \eqref{2dNLDE4comp}.

Using the definition of $\bm{f} \left( \bm{u} \right)$ in  \eqref{2dNLDE4comp} gives
\begin{equation}\label{f_ut}
  \partial_{t} \bm{f} \left( \bm{u} \right) = \left( \alpha \partial_{t} \bm{u},\beta \partial_{t} \bm{u} \right),
\end{equation}
where $\partial_{t} \bm{u}$ is calculated  from the NLD equation \eqref{2dNLDE4comp} directly.
%
Using \eqref{f_ut} gives
\begin{equation}\label{f_utt}
  \partial_{tt} \bm{f} \left( \bm{u} \right) = \left( \alpha \partial_{tt} \bm{u},\beta \partial_{tt} \bm{u} \right),
\end{equation}
where  $\partial_{tt} \bm{u}$ is calculated by using the NLD equation \eqref{2dNLDE4comp} as follows
\begin{equation}\label{u_tt}
  \partial_{tt} \bm{u} = - \alpha \partial_{tx} \bm{u} - \beta \partial_{ty} \bm{u} + \partial_{t}  \bm{\mathcal{M}} \left( \bm{u} \right).
\end{equation}
Here
\begin{align}
  \partial_{tx} \bm{u} =&\ - \alpha \partial_{xx} \bm{u} - \beta \partial_{xy} \bm{u} + \partial_{x}  \bm{\mathcal{M}} \left( \bm{u} \right) \notag \\
  =&\ - \alpha \partial_{xx} \bm{u} - \beta \partial_{xy} \bm{u} + \partial_{x} g \left( \rho \right) \gamma \bm{u} + g \left( \rho \right) \gamma \partial_{x} \bm{u}, \label{u_tx} \\
  \partial_{ty} \bm{u} =&\ - \alpha \partial_{xy} \bm{u} - \beta \partial_{yy} \bm{u} + \partial_{y}  \bm{\mathcal{M}} \left( \bm{u} \right) \notag \\
  =&\ - \alpha \partial_{xy} \bm{u} - \beta \partial_{yy} \bm{u} + \partial_{y} g \left( \rho \right) \gamma \bm{u} + g \left( \rho \right) \gamma \partial_{y} \bm{u}, \label{u_ty} \\
  \partial_{t}  \bm{\mathcal{M}} \left( \bm{u} \right) =&\ \partial_{t} g \left( \rho \right) \gamma \bm{u} + g \left( \rho \right) \gamma \partial_{t} \bm{u}, \label{M_ut}
\end{align}
and
\begin{align}
  \partial_{t} g \left( \rho \right) =&\ - \left( \kappa + 1 \right) \kappa \lambda \rho^{\kappa - 1} \partial_{t} \rho,\ \partial_{t} \rho = 2 \sum_{p=1}^{4} \left( -1 \right)^{p - 1} \left( u_{p} \partial_{t} u_{p} \right), \label{grho_t} \\
  \partial_{x} g \left( \rho \right) =&\ - \left( \kappa + 1 \right) \kappa \lambda \rho^{\kappa - 1} \partial_{x} \rho,\ \partial_{x} \rho = 2 \sum_{p=1}^{4} \left( -1 \right)^{p - 1} \left( u_{p} \partial_{x} u_{p} \right), \label{grho_x} \\
  \partial_{y} g \left( \rho \right) =&\ - \left( \kappa + 1 \right) \kappa \lambda \rho^{\kappa - 1} \partial_{y} \rho,\ \partial_{y} \rho = 2 \sum_{p=1}^{4} \left( -1 \right)^{p - 1} \left( u_{p} \partial_{y} u_{p} \right). \label{grho_y}
\end{align}

Using \eqref{f_utt}  {further} gives
\begin{equation}\label{f_uttt}
  \partial_{ttt} \bm{f} \left( \bm{u} \right) = \left( \alpha \partial_{ttt} \bm{u},\beta \partial_{ttt} \bm{u} \right),
\end{equation}
where $\partial_{ttt} \bm{u}$ is  computed  from \eqref{2dNLDE4comp} or \eqref{u_tt} as follows
\begin{equation}\label{u_ttt}
  \partial_{ttt} \bm{u} = - \alpha \partial_{ttx} \bm{u} - \beta \partial_{tty} \bm{u} + \partial_{tt}  \bm{\mathcal{M}} \left( \bm{u} \right).
\end{equation}
Here
\begin{align}
  \partial_{ttx} \bm{u} =&\ - \alpha \partial_{txx} \bm{u} - \beta \partial_{txy} \bm{u} + \partial_{tx}  \bm{\mathcal{M}} \left( \bm{u} \right), \label{u_ttx} \\
  \partial_{tty} \bm{u} =&\ - \alpha \partial_{txy} \bm{u} - \beta \partial_{tyy} \bm{u} + \partial_{ty}  \bm{\mathcal{M}} \left( \bm{u} \right), \label{u_tty} \\
  \partial_{tt}  \bm{\mathcal{M}} \left( \bm{u} \right) =&\ \partial_{tt} g \left( \rho \right) \gamma \bm{u} + 2 \partial_{t} g \left( \rho \right) \gamma \partial_{t} \bm{u} + g \left( \rho \right) \gamma \partial_{tt} \bm{u}, \label{M_utt}
\end{align}
with 
\begin{align*}
  \partial_{txx} \bm{u} =&\ - \alpha \partial_{xxx} \bm{u} - \beta \partial_{xxy} \bm{u} + \partial_{xx}  \bm{\mathcal{M}} \left( \bm{u} \right), \\
  \partial_{txy} \bm{u} =&\ - \alpha \partial_{xxy} \bm{u} - \beta \partial_{xyy} \bm{u} + \partial_{xy}  \bm{\mathcal{M}} \left( \bm{u} \right), \\
  \partial_{tyy} \bm{u} =&\ - \alpha \partial_{xyy} \bm{u} - \beta \partial_{yyy} \bm{u} + \partial_{yy} \bm{\mathcal{M}}\left( \bm{u} \right), \\
  \partial_{tx}  \bm{\mathcal{M}} \left( \bm{u} \right) =&\ \partial_{tx} g \left( \rho \right) \gamma \bm{u} + \partial_{t} g \left( \rho \right) \gamma \partial_{x} \bm{u} + \partial_{x} g \left( \rho \right) \gamma \partial_{t} \bm{u} + g \left( \rho \right) \gamma \partial_{tx} \bm{u}, \\
  \partial_{ty}  \bm{\mathcal{M}} \left( \bm{u} \right) =&\ \partial_{ty} g \left( \rho \right) \gamma \bm{u} + \partial_{t} g \left( \rho \right) \gamma \partial_{y} \bm{u} + \partial_{y} g \left( \rho \right) \gamma \partial_{t} \bm{u} + g \left( \rho \right) \gamma \partial_{ty} \bm{u},
\end{align*}
and
\begin{align*}
  \partial_{xx}  \bm{\mathcal{M}} \left( \bm{u} \right) =&\ \partial_{xx} g \left( \rho \right) \gamma \bm{u} + 2 \partial_{x} g \left( \rho \right) \gamma \partial_{x} \bm{u} + g \left( \rho \right) \gamma \partial_{xx} \bm{u}, \\
  \partial_{xy}  \bm{\mathcal{M}} \left( \bm{u} \right) =&\ \partial_{xy} g \left( \rho \right) \gamma \bm{u} + \partial_{x} g \left( \rho \right) \gamma \partial_{y} \bm{u} + \partial_{y} g \left( \rho \right) \gamma \partial_{x} \bm{u} + g \left( \rho \right) \gamma \partial_{xy} \bm{u}, \\
  \partial_{yy}  \bm{\mathcal{M}} \left( \bm{u} \right) =&\ \partial_{yy} g \left( \rho \right) \gamma \bm{u} + 2 \partial_{y} g \left( \rho \right) \gamma \partial_{y} \bm{u} + g \left( \rho \right) \gamma \partial_{yy} \bm{u}, \\
  \partial_{tx} g \left( \rho \right) =&\ - \left( \kappa + 1 \right) \kappa \lambda \left[ \left( \kappa -1 \right) \rho^{\kappa - 2} \partial_{x} \rho \partial_{t} \rho + \rho^{\kappa - 1} \partial_{tx} \rho \right], \\
  \partial_{tx} \rho =&\ 2 \sum_{p=1}^{4} \left( -1 \right)^{p - 1} \left( \partial_{x} u_{p,h} \partial_{t} u_{p,h} + u_{p,h} \partial_{tx} u_{p,h} \right), \\
  \partial_{ty} g \left( \rho \right) =&\ - \left( \kappa + 1 \right) \kappa \lambda \left[ \left( \kappa -1 \right) \rho^{\kappa - 2} \partial_{y} \rho \partial_{t} \rho + \rho^{\kappa - 1} \partial_{ty} \rho \right], \\
  \partial_{ty} \rho =&\ 2 \sum_{p=1}^{4} \left( -1 \right)^{p - 1} \left( \partial_{y} u_{p,h} \partial_{t} u_{p,h} + u_{p,h} \partial_{ty} u_{p,h} \right), \\
  \partial_{xx} g \left( \rho \right) =&\ - \left( \kappa + 1 \right) \kappa \lambda \left[ \left( \kappa - 1 \right) \rho^{\kappa - 2} \left( \partial_{x} \rho \right)^{2} + \rho^{\kappa - 1} \partial_{xx} \rho \right], \\
  \partial_{xx} \rho =&\ 2 \sum_{p=1}^{4} \left( -1 \right)^{p - 1} \left( \left( \partial_{x} u_{p,h} \right)^{2} + u_{p,h} \partial_{xx} u_{p,h} \right), \\
  \partial_{xy} g \left( \rho \right) =&\ - \left( \kappa + 1 \right) \kappa \lambda \left[ \left( \kappa -1 \right) \rho^{\kappa - 2} \partial_{y} \rho \partial_{x} \rho + \rho^{\kappa - 1} \partial_{xy} \rho \right], \notag \\
  \partial_{xy} \rho =&\ 2 \sum_{p=1}^{4} \left( -1 \right)^{p - 1} \left( \partial_{y} u_{p,h} \partial_{x} u_{p,h} + u_{p,h} \partial_{xy} u_{p,h} \right), \\
  \partial_{yy} g \left( \rho \right) =&\ - \left( \kappa + 1 \right) \kappa \lambda \left[ \left( \kappa - 1 \right) \rho^{\kappa - 2} \left( \partial_{y} \rho \right)^{2} + \rho^{\kappa - 1} \partial_{yy} \rho \right], \\
  \partial_{yy} \rho =&\ 2 \sum_{p=1}^{4} \left( -1 \right)^{p - 1} \left( \left( \partial_{y} u_{p,h} \right)^{2} + u_{p,h} \partial_{yy} u_{p,h} \right).
\end{align*}
The terms $\partial_{tt} g$ and $\partial_{tt} \rho$ in \eqref{M_utt}
are calculated as follows
\begin{align}
  \partial_{tt} g \left( \rho \right) =&\ - \left( \kappa + 1 \right) \kappa \lambda \left[ \left( \kappa - 1 \right) \rho^{\kappa - 2} \left( \partial_{t} \rho \right)^{2} + \rho^{\kappa - 1} \partial_{tt} \rho \right], \label{grho_tt} \\
  \partial_{tt} \rho =&\ 2 \sum_{p=1}^{4} \left( -1 \right)^{p - 1} \left( \left( \partial_{t} u_{p,h} \right)^{2} + u_{p,h} \partial_{tt} u_{p,h} \right). \label{rho_tt}
\end{align}
In the above equations, $\partial_{t} \bm{u}$, $\partial_{tt} \bm{u}$, $\partial_{tx} \bm{u}$, $\partial_{ty} \bm{u}$, $\partial_{t} g \left( \rho \right)$, $\partial_{t} \rho$, $\partial_{x} g \left( \rho \right)$, $\partial_{x} \rho$, $\partial_{y} g \left( \rho \right)$, and $\partial_{y} \rho$ can be obtained by  \eqref{2dNLDE4comp}, \eqref{u_tt}, \eqref{u_tx}, \eqref{u_ty}, \eqref{grho_t}, \eqref{grho_x} and \eqref{grho_y}.
Finally, substituting \eqref{f_ut}, \eqref{f_utt} and \eqref{f_uttt} into Eq. \eqref{F_u} gives $\bm{\mathcal{F}} \left( \bm{u} \right)$.

Let us calculate
  $\bm{\mathcal{G}} \left( \bm{u} \right)$. One needs to compute high-order (up to third-order) time derivatives of $ \bm{\mathcal{M}} \left( \bm{u} \right)$.
In fact, we have obtained $\partial_{t}  \bm{\mathcal{M}} \left( \bm{u} \right)$ and $\partial_{tt}  \bm{\mathcal{M}} \left( \bm{u} \right)$ in the above calculations, see   \eqref{M_ut} and \eqref{M_utt}. For $\partial_{ttt}  \bm{\mathcal{M}} \left( \bm{u} \right)$, one has
\begin{equation}\label{M_uttt}
  \partial_{ttt}  \bm{\mathcal{M}} \left( \bm{u} \right) = \partial_{ttt} g \left( \rho \right) \gamma \bm{u} + 3 \partial_{tt} g \left( \rho \right) \gamma \partial_{t} \bm{u}+ 3 \partial_{t} g \left( \rho \right) \gamma \partial_{tt} \bm{u} + g \left( \rho \right) \gamma \partial_{ttt} \bm{u},
\end{equation}
where
\begin{align*}
  \partial_{ttt} g \left( \rho \right) =&\ - \left( \kappa + 1 \right) \kappa \lambda \left[ \left( \kappa - 1 \right) \left( \kappa - 2 \right) \rho^{\kappa - 3} \partial_{t} \rho + 2 \left( \kappa -1 \right) \rho^{\kappa - 2} \partial_{tt} \rho + \rho^{\kappa - 1} \partial_{ttt} \rho \right], \\
  \partial_{ttt} \rho =&\ 2 \sum_{p=1}^{4} \left( -1 \right)^{p - 1} \left( 3 \partial_{t} u_{p,h} \partial_{tt} u_{p,h} + u_{p,h} \partial_{ttt} u_{p,h} \right).
\end{align*}

We remark here that $\partial_{t} \bm{u}$, $\partial_{tt} \bm{u}$, $\partial_{ttt} \bm{u}$, 
$\partial_{t} g \left( \rho \right)$, $\partial_{t} \rho$, $\partial_{tt} g \left( \rho \right)$ and $\partial_{tt} \rho$ in the above equations can be obtained by using \eqref{2dNLDE4comp}, \eqref{u_tt}, \eqref{grho_t}, \eqref{u_ttt}, \eqref{grho_tt} and \eqref{rho_tt}. Finally, substituting \eqref{M_ut}, \eqref{M_utt} and \eqref{M_uttt} into Eq. \eqref{G_u} gives $\bm{\mathcal{G}} \left( \bm{u} \right)$.

\section{Pseudo codes}
\label{Appendix_codes}

The pseudo codes are given here for executing  three 1D DG methods. The numbers in each line represent the needed amount of the corresponding operations.

{\scriptsize
\begin{breakablealgorithm}
  \caption{Pseudo codes for $P^{2}$-LWDG}
  \begin{algorithmic}[1]
  \Require The given initial data $u_{p,j}^{\left(l\right)} \left( t_{0}=0 \right),\  {p=1,2,3,4},\ l=0,1,2;$
  \Ensure $u_{p,j}^{\left(l\right)} \left( T \right),$ $T$: the final time;
  \State Set $a_{j}^{\left(0\right)} =a_{0} = \Delta x;\ a_{j}^{\left(1\right)} = a_{1} = \frac{\Delta x^{3}}{12};\ a_{j}^{\left(2\right)} = a_{2} = \frac{\Delta x^{5}}{180};\ C_{1}=\frac{\Delta x}{2};\ C_{2}=\frac{\Delta x^{2}}{6};\ C_{6}=\frac{C_{2}}{2};\ \Lambda=2 \lambda;\ \widehat{\Lambda}=4 \lambda;\ \tau = \frac{\mu \Delta x}{2*2+1};$
  \For{$k=1:P$} ({$P$: number of Gaussian points})
  \State $D_{1k}=C_{1} \tilde{x}_{k};\ D_{2k}=D_{1k}^{2} - C_{6};\ D_{4k}=2D_{1k};\ W_{1k}=C_{1} \omega_{k};$
  \EndFor

  \State Set time$=0$; $k=-1$;
  \While{time$<T$}
  \State \textcolor[rgb]{1.00,0.00,0.00}{$1+,0\times$} $k=k+1$;
  \If{time$+\tau>T$}
  \State $\tau = T-$time;
  \EndIf
  \State \textcolor[rgb]{1.00,0.00,0.00}{$\ 0+,\ 8\times$} $t_{1}=\frac{\tau}{2};\ t_{2}=\frac{\tau^{2}}{6};\ t_{3}=\frac{t_{1}t_{2}}{2};\ T_{0}=\frac{\tau}{a_{0}};\ T_{1}=\frac{\tau}{a_{1}};\ T_{2}=\frac{\tau}{a_{2}};$
  \State \textcolor[rgb]{1.00,0.00,0.00}{Compute the left and right limits at cell interface.}
  \For{$j = 1:J$}
  \State \textcolor[rgb]{1.00,0.00,1.00}{$\ 12+,\ 8\times$} $L_{p} = u_{p,j}^{\left(0\right)} \left(t_{k} \right) + C_{2} u_{p,j}^{\left(2\right)} \left(t_{k} \right);\  R_{p} = C_{1} u_{p,j}^{\left(1\right)} \left(t_{k} \right);\ u_{p,j+\frac{1}{2}}^{-} = L_{p} + R_{p};\ u_{p,j-\frac{1}{2}}^{+} = L_{p} - R_{p};$
  \State \textcolor[rgb]{1.00,0.00,1.00}{$\ 8+,\ 4\times$} $R_{p} = \Delta x u_{p,j}^{\left(2\right)} \left(t_{k} \right);\ \left(u_{x}\right)_{p,j+\frac{1}{2}}^{-} = u_{p,j}^{\left(1\right)} \left(t_{k} \right) + R_{p};\ \left(u_{x}\right)_{p,j-\frac{1}{2}}^{+} = u_{p,j}^{\left(1\right)} \left(t_{k} \right) - R_{p};$
  \State \textcolor[rgb]{1.00,0.00,1.00}{$\ 0+,\ 4\times$} $L_{p} = 2 u_{p,j}^{\left(2\right)} \left(t_{k} \right);\ \left(u_{xx}\right)_{p,j+\frac{1}{2}}^{-} = L_{p};\ \left(u_{xx}\right)_{p,j-\frac{1}{2}}^{+} = L_{p};\ \left(u_{xxx}\right)_{p,j+\frac{1}{2}}^{-} = 0;\ \left(u_{xxx}\right)_{p,j-\frac{1}{2}}^{+} = 0;$
  \EndFor
  \State \textcolor[rgb]{1.00,0.00,0.00}{Remark: Boundary conditions}
  \State $u_{p\frac{1}{2}}^{-} = 0;\ u_{pJ+\frac{1}{2}}^{+} = 0;\ \left(u_{x}\right)_{p\frac{1}{2}}^{-} = 0;\ \left(u_{x}\right)_{pJ+\frac{1}{2}}^{+} = 0;\ \left(u_{xx}\right)_{p\frac{1}{2}}^{-} = 0;\ \left(u_{xx}\right)_{pJ+\frac{1}{2}}^{+} = 0;\ \left(u_{xxx}\right)_{p\frac{1}{2}}^{-} = 0;\ \left(u_{xxx}\right)_{pJ+\frac{1}{2}}^{+} = 0;$
  \State \textcolor[rgb]{1.00,0.00,0.00}{Compute the flux at cell interface.}
  \For{$j = 0:J$}
  \State \textcolor[rgb]{1.00,0.00,0.00}{$\ 8+,10 \times$} $\widetilde{\rho}^{\pm} = m - 2\lambda \rho_{j+\frac{1}{2}}^{\pm} = m - \Lambda \sum \limits_{p=1}^{4} \left( -1 \right)^{p+1} \left( u_{p,j+\frac{1}{2}}^{\pm} \right)^{2};$
  \State \textcolor[rgb]{1.00,0.00,0.00}{$\ 6+,10 \times$} $\widetilde{\rho}_{x}^{\pm} = 2\lambda \left(\rho_{x}\right)_{j+\frac{1}{2}}^{\pm} = \widehat{\Lambda} \sum \limits_{p=1}^{4} \left( -1 \right)^{p+1} \left( u_{p,j+\frac{1}{2}}^{\pm} \left(u_{x}\right)_{p,j+\frac{1}{2}}^{\pm} \right);$
  \State \textcolor[rgb]{1.00,0.00,0.00}{$\ 4+,\ 4\times$} $\left(u_{t}\right)_{1,j+\frac{1}{2}}^{\pm} = - \left(u_{x}\right)_{2,j+\frac{1}{2}}^{\pm} + \widetilde{\rho}^{\pm} u_{3,j+\frac{1}{2}}^{\pm};\ \left(u_{t}\right)_{2,j+\frac{1}{2}}^{\pm} = - \left(u_{x}\right)_{1,j+\frac{1}{2}}^{\pm} - \widetilde{\rho}^{\pm} u_{4,j+\frac{1}{2}}^{\pm};$
  \State \textcolor[rgb]{1.00,0.00,0.00}{$\ 4+,\ 4\times$} $\left(u_{t}\right)_{3,j+\frac{1}{2}}^{\pm} = - \left(u_{x}\right)_{4,j+\frac{1}{2}}^{\pm} - \widetilde{\rho}^{\pm} u_{1,j+\frac{1}{2}}^{\pm};\ \left(u_{t}\right)_{4,j+\frac{1}{2}}^{\pm} = - \left(u_{x}\right)_{3,j+\frac{1}{2}}^{\pm} + \widetilde{\rho}^{\pm} u_{2,j+\frac{1}{2}}^{\pm};$
  \State \textcolor[rgb]{1.00,0.00,0.00}{$\ 6+,10 \times$} $\widetilde{\rho}_{t}^{\pm} = 2\lambda \left(\rho_{t}\right)_{j+\frac{1}{2}}^{\pm} = \widehat{\Lambda} \sum \limits_{p=1}^{4} \left( -1 \right)^{p+1} \left( u_{p,j+\frac{1}{2}}^{\pm} \left(u_{t}\right)_{p,j+\frac{1}{2}}^{\pm} \right);$
  \State \textcolor[rgb]{1.00,0.00,0.00}{$\ 4+,\ 8\times$} $M_{1,x}^{\pm} = - \widetilde{\rho}_{x}^{\pm} u_{3,j+\frac{1}{2}}^{\pm} + \widetilde{\rho}^{\pm} \left( u_{x}\right)_{3,j+\frac{1}{2}}^{\pm};\ M_{2,x}^{\pm} = \widetilde{\rho}_{x}^{\pm} u_{4,j+\frac{1}{2}}^{\pm} - \widetilde{\rho}^{\pm} \left( u_{x}\right)_{4,j+\frac{1}{2}}^{\pm};$
  \State \textcolor[rgb]{1.00,0.00,0.00}{$\ 4+,\ 8\times$} $M_{3,x}^{\pm} = \widetilde{\rho}_{x}^{\pm} u_{1,j+\frac{1}{2}}^{\pm} - \widetilde{\rho}^{\pm} \left( u_{x}\right)_{1,j+\frac{1}{2}}^{\pm};\ M_{4,x}^{\pm} = - \widetilde{\rho}_{x}^{\pm} u_{2,j+\frac{1}{2}}^{\pm} + \widetilde{\rho}^{\pm} \left( u_{x}\right)_{2,j+\frac{1}{2}}^{\pm};$
  \State \textcolor[rgb]{1.00,0.00,0.00}{$\ 4+,\ 8\times$} $M_{1,t}^{\pm} = - \widetilde{\rho}_{t}^{\pm} u_{3,j+\frac{1}{2}}^{\pm} + \widetilde{\rho}^{\pm} \left( u_{t}\right)_{3,j+\frac{1}{2}}^{\pm};\ M_{2,t}^{\pm} = \widetilde{\rho}_{t}^{\pm} u_{4,j+\frac{1}{2}}^{\pm} - \widetilde{\rho}^{\pm} \left( u_{t}\right)_{4,j+\frac{1}{2}}^{\pm};$
  \State \textcolor[rgb]{1.00,0.00,0.00}{$\ 4+,\ 8\times$} $M_{3,t}^{\pm} = \widetilde{\rho}_{t}^{\pm} u_{1,j+\frac{1}{2}}^{\pm} - \widetilde{\rho}^{\pm} \left( u_{t}\right)_{1,j+\frac{1}{2}}^{\pm};\ M_{4,t}^{\pm} = - \widetilde{\rho}_{t}^{\pm} u_{2,j+\frac{1}{2}}^{\pm} + \widetilde{\rho}^{\pm} \left( u_{t}\right)_{2,j+\frac{1}{2}}^{\pm};$
  \State \textcolor[rgb]{1.00,0.00,0.00}{$\ 4+,\ 0\times$} $\left(u_{tx}\right)_{1,j+\frac{1}{2}}^{\pm} = - \left(u_{xx}\right)_{2,j+\frac{1}{2}}^{\pm} + M_{1,x}^{\pm};\ \left(u_{tx}\right)_{2,j+\frac{1}{2}}^{\pm} = - \left(u_{xx}\right)_{1,j+\frac{1}{2}}^{\pm} + M_{2,x}^{\pm};$
  \State \textcolor[rgb]{1.00,0.00,0.00}{$\ 4+,\ 0\times$} $\left(u_{tx}\right)_{3,j+\frac{1}{2}}^{\pm} = - \left(u_{xx}\right)_{4,j+\frac{1}{2}}^{\pm} + M_{3,x}^{\pm};\ \left(u_{tx}\right)_{4,j+\frac{1}{2}}^{\pm} = - \left(u_{xx}\right)_{3,j+\frac{1}{2}}^{\pm} + M_{4,x}^{\pm};$
  \State \textcolor[rgb]{1.00,0.00,0.00}{$\ 4+,\ 0\times$} $\left(u_{tt}\right)_{1,j+\frac{1}{2}}^{\pm} = - \left(u_{tx}\right)_{2,j+\frac{1}{2}}^{\pm} + M_{1,t}^{\pm};\ \left(u_{tt}\right)_{2,j+\frac{1}{2}}^{\pm} = - \left(u_{tx}\right)_{1,j+\frac{1}{2}}^{\pm} + M_{2,t}^{\pm};$
  \State \textcolor[rgb]{1.00,0.00,0.00}{$\ 4+,\ 0\times$} $\left(u_{tt}\right)_{3,j+\frac{1}{2}}^{\pm} = - \left(u_{tx}\right)_{4,j+\frac{1}{2}}^{\pm} + M_{3,t}^{\pm};\ \left(u_{tt}\right)_{4,j+\frac{1}{2}}^{\pm} = - \left(u_{tx}\right)_{3,j+\frac{1}{2}}^{\pm} + M_{4,t}^{\pm};$
  \State \textcolor[rgb]{1.00,0.00,0.00}{$14 +,18 \times$} $\widetilde{\rho}_{xx}^{\pm} = 2\lambda \left(\rho_{xx}\right)_{j+\frac{1}{2}}^{\pm} = \widehat{\Lambda} \sum \limits_{p=1}^{4} \left( -1 \right)^{p+1} \left( \left( \left(u_{x}\right)_{p,j+\frac{1}{2}}^{\pm} \right)^{2} + u_{p,j+\frac{1}{2}}^{\pm} \left(u_{xx}\right)_{p,j+\frac{1}{2}}^{\pm} \right);$
  \State \textcolor[rgb]{1.00,0.00,0.00}{$14 +,18 \times$} $\widetilde{\rho}_{tx}^{\pm} = 2\lambda \left(\rho_{tx}\right)_{j+\frac{1}{2}}^{\pm} = \widehat{\Lambda} \sum \limits_{p=1}^{4} \left( -1 \right)^{p+1} \left( \left(u_{x}\right)_{p,j+\frac{1}{2}}^{\pm} \left(u_{t}\right)_{p,j+\frac{1}{2}}^{\pm} + u_{p,j+\frac{1}{2}}^{\pm} \left(u_{tx}\right)_{p,j+\frac{1}{2}}^{\pm} \right);$
  \State \textcolor[rgb]{1.00,0.00,0.00}{$14 +,18 \times$} $\widetilde{\rho}_{tt}^{\pm} = 2\lambda \left(\rho_{tt}\right)_{j+\frac{1}{2}}^{\pm} = \widehat{\Lambda} \sum \limits_{p=1}^{4} \left( -1 \right)^{p+1} \left( \left( \left(u_{t}\right)_{p,j+\frac{1}{2}}^{\pm} \right)^{2} + u_{p,j+\frac{1}{2}}^{\pm} \left(u_{tt}\right)_{p,j+\frac{1}{2}}^{\pm} \right);$
  \State \textcolor[rgb]{1.00,0.00,0.00}{$\ 4+,\ 8\times$} $M_{1,xx}^{\pm} = - \widetilde{\rho}_{xx}^{\pm} u_{3,j+\frac{1}{2}}^{\pm} - 2\widetilde{\rho}_{x}^{\pm} \left( u_{x}\right)_{3,j+\frac{1}{2}}^{\pm} + \widetilde{\rho}^{\pm} \left( u_{xx}\right)_{3,j+\frac{1}{2}}^{\pm};$
  \State \textcolor[rgb]{1.00,0.00,0.00}{$\ 4+,\ 8\times$} $M_{2,xx}^{\pm} = \widetilde{\rho}_{xx}^{\pm} u_{4,j+\frac{1}{2}}^{\pm} + 2\widetilde{\rho}_{x}^{\pm} \left( u_{x}\right)_{4,j+\frac{1}{2}}^{\pm} - \widetilde{\rho}^{\pm} \left( u_{xx}\right)_{4,j+\frac{1}{2}}^{\pm};$
  \State \textcolor[rgb]{1.00,0.00,0.00}{$\ 4+,\ 8\times$} $M_{3,xx}^{\pm} = \widetilde{\rho}_{xx}^{\pm} u_{1,j+\frac{1}{2}}^{\pm} + 2\widetilde{\rho}_{x}^{\pm} \left( u_{x}\right)_{1,j+\frac{1}{2}}^{\pm} - \widetilde{\rho}^{\pm} \left( u_{xx}\right)_{1,j+\frac{1}{2}}^{\pm};$
  \State \textcolor[rgb]{1.00,0.00,0.00}{$\ 4+,\ 8\times$} $M_{4,xx}^{\pm} = - \widetilde{\rho}_{xx}^{\pm} u_{2,j+\frac{1}{2}}^{\pm} - 2\widetilde{\rho}_{x}^{\pm} \left( u_{x}\right)_{2,j+\frac{1}{2}}^{\pm} + \widetilde{\rho}^{\pm} \left( u_{xx}\right)_{2,j+\frac{1}{2}}^{\pm};$
  \State \textcolor[rgb]{1.00,0.00,0.00}{$\ 6+,\ 8 \times$} $M_{1,tx}^{\pm} = - \widetilde{\rho}_{tx}^{\pm} u_{3,j+\frac{1}{2}}^{\pm} - \widetilde{\rho}_{t}^{\pm} \left( u_{x}\right)_{3,j+\frac{1}{2}}^{\pm} - \widetilde{\rho}_{x}^{\pm} \left( u_{t}\right)_{3,j+\frac{1}{2}}^{\pm} + \widetilde{\rho}^{\pm} \left( u_{tx}\right)_{3,j+\frac{1}{2}}^{\pm};$
  \State \textcolor[rgb]{1.00,0.00,0.00}{$\ 6+,\ 8 \times$} $M_{2,tx}^{\pm} = \widetilde{\rho}_{tx}^{\pm} u_{4,j+\frac{1}{2}}^{\pm} + \widetilde{\rho}_{t}^{\pm} \left( u_{x}\right)_{4,j+\frac{1}{2}}^{\pm} + \widetilde{\rho}_{x}^{\pm} \left( u_{t}\right)_{4,j+\frac{1}{2}}^{\pm} - \widetilde{\rho}^{\pm} \left( u_{tx}\right)_{4,j+\frac{1}{2}}^{\pm};$
  \State \textcolor[rgb]{1.00,0.00,0.00}{$\ 6+,\ 8 \times$} $M_{3,tx}^{\pm} = \widetilde{\rho}_{tx}^{\pm} u_{1,j+\frac{1}{2}}^{\pm} + \widetilde{\rho}_{t}^{\pm} \left( u_{x}\right)_{1,j+\frac{1}{2}}^{\pm} + \widetilde{\rho}_{x}^{\pm} \left( u_{t}\right)_{1,j+\frac{1}{2}}^{\pm} - \widetilde{\rho}^{\pm} \left( u_{tx}\right)_{1,j+\frac{1}{2}}^{\pm};$
  \State \textcolor[rgb]{1.00,0.00,0.00}{$\ 6+,\ 8 \times$} $M_{4,tx}^{\pm} = - \widetilde{\rho}_{tx}^{\pm} u_{2,j+\frac{1}{2}}^{\pm} - \widetilde{\rho}_{t}^{\pm} \left( u_{x}\right)_{2,j+\frac{1}{2}}^{\pm} - \widetilde{\rho}_{x}^{\pm} \left( u_{t}\right)_{2,j+\frac{1}{2}}^{\pm} + \widetilde{\rho}^{\pm} \left( u_{tx}\right)_{2,j+\frac{1}{2}}^{\pm};$
  \State \textcolor[rgb]{1.00,0.00,0.00}{$\ 4+,\ 8\times$} $M_{1,tt}^{\pm} = - \widetilde{\rho}_{tt}^{\pm} u_{3,j+\frac{1}{2}}^{\pm} - 2\widetilde{\rho}_{t}^{\pm} \left( u_{t}\right)_{3,j+\frac{1}{2}}^{\pm} + \widetilde{\rho}^{\pm} \left( u_{tt}\right)_{3,j+\frac{1}{2}}^{\pm};$
  \State \textcolor[rgb]{1.00,0.00,0.00}{$\ 4+,\ 8\times$} $M_{2,tt}^{\pm} = \widetilde{\rho}_{tt}^{\pm} u_{4,j+\frac{1}{2}}^{\pm} + 2\widetilde{\rho}_{t}^{\pm} \left( u_{t}\right)_{4,j+\frac{1}{2}}^{\pm} - \widetilde{\rho}^{\pm} \left( u_{tt}\right)_{4,j+\frac{1}{2}}^{\pm};$
  \State \textcolor[rgb]{1.00,0.00,0.00}{$\ 4+,\ 8\times$} $M_{3,tt}^{\pm} = \widetilde{\rho}_{tt}^{\pm} u_{1,j+\frac{1}{2}}^{\pm} + 2\widetilde{\rho}_{t}^{\pm} \left( u_{t}\right)_{1,j+\frac{1}{2}}^{\pm} - \widetilde{\rho}^{\pm} \left( u_{tt}\right)_{1,j+\frac{1}{2}}^{\pm};$
  \State \textcolor[rgb]{1.00,0.00,0.00}{$\ 4+,\ 8\times$} $M_{4,tt}^{\pm} = - \widetilde{\rho}_{tt}^{\pm} u_{2,j+\frac{1}{2}}^{\pm} - 2\widetilde{\rho}_{t}^{\pm} \left( u_{t}\right)_{2,j+\frac{1}{2}}^{\pm} + \widetilde{\rho}^{\pm} \left( u_{tt}\right)_{2,j+\frac{1}{2}}^{\pm};$
  \State \textcolor[rgb]{1.00,0.00,0.00}{$\ 4+,\ 0 \times$} $\left(u_{txx}\right)_{1,j+\frac{1}{2}}^{\pm} = - \left(u_{xxx}\right)_{2,j+\frac{1}{2}}^{\pm} + M_{1,xx}^{\pm};\ \left(u_{txx}\right)_{2,j+\frac{1}{2}}^{\pm} = - \left(u_{xxx}\right)_{1,j+\frac{1}{2}}^{\pm} + M_{2,xx}^{\pm};$
  \State \textcolor[rgb]{1.00,0.00,0.00}{$\ 4+,\ 0 \times$} $\left(u_{txx}\right)_{3,j+\frac{1}{2}}^{\pm} = - \left(u_{xxx}\right)_{4,j+\frac{1}{2}}^{\pm} + M_{3,xx}^{\pm};\ \left(u_{txx}\right)_{4,j+\frac{1}{2}}^{\pm} = - \left(u_{xxx}\right)_{3,j+\frac{1}{2}}^{\pm} + M_{4,xx}^{\pm};$
  \State \textcolor[rgb]{1.00,0.00,0.00}{$\ 4+,\ 0 \times$} $\left(u_{ttx}\right)_{1,j+\frac{1}{2}}^{\pm} = - \left(u_{txx}\right)_{2,j+\frac{1}{2}}^{\pm} + M_{1,tx}^{\pm};\ \left(u_{ttx}\right)_{2,j+\frac{1}{2}}^{\pm} = - \left(u_{txx}\right)_{1,j+\frac{1}{2}}^{\pm} + M_{2,tx}^{\pm};$
  \State \textcolor[rgb]{1.00,0.00,0.00}{$\ 4+,\ 0 \times$} $\left(u_{ttx}\right)_{3,j+\frac{1}{2}}^{\pm} = - \left(u_{txx}\right)_{4,j+\frac{1}{2}}^{\pm} + M_{3,tx}^{\pm};\ \left(u_{ttx}\right)_{4,j+\frac{1}{2}}^{\pm} = - \left(u_{txx}\right)_{3,j+\frac{1}{2}}^{\pm} + M_{4,tx}^{\pm};$
  \State \textcolor[rgb]{1.00,0.00,0.00}{$\ 4+,\ 0 \times$} $\left(u_{ttt}\right)_{1,j+\frac{1}{2}}^{\pm} = - \left(u_{ttx}\right)_{2,j+\frac{1}{2}}^{\pm} + M_{1,tt}^{\pm};\ \left(u_{ttt}\right)_{2,j+\frac{1}{2}}^{\pm} = - \left(u_{ttx}\right)_{1,j+\frac{1}{2}}^{\pm} + M_{2,tt}^{\pm};$
  \State \textcolor[rgb]{1.00,0.00,0.00}{$\ 4+,\ 0 \times$} $\left(u_{ttt}\right)_{3,j+\frac{1}{2}}^{\pm} = - \left(u_{ttx}\right)_{4,j+\frac{1}{2}}^{\pm} + M_{3,tt}^{\pm};\ \left(u_{ttt}\right)_{4,j+\frac{1}{2}}^{\pm} = - \left(u_{ttx}\right)_{3,j+\frac{1}{2}}^{\pm} + M_{4,tt}^{\pm};$
  \State \textcolor[rgb]{1.00,0.00,0.00}{$\ 6+,\ 6\times$} $\mathcal{F}_{1,j+\frac{1}{2}}^{\pm} = u_{2,j+\frac{1}{2}}^{\pm} + t_{1} \left(u_{t}\right)_{2,j+\frac{1}{2}}^{\pm} + t_{2} \left(u_{tt}\right)_{2,j+\frac{1}{2}}^{\pm} + t_{3} \left(u_{ttt}\right)_{2,j+\frac{1}{2}}^{\pm};$
  \State \textcolor[rgb]{1.00,0.00,0.00}{$\ 6+,\ 6\times$} $\mathcal{F}_{2,j+\frac{1}{2}}^{\pm} = u_{1,j+\frac{1}{2}}^{\pm} + t_{1} \left(u_{t}\right)_{1,j+\frac{1}{2}}^{\pm} + t_{2} \left(u_{tt}\right)_{1,j+\frac{1}{2}}^{\pm} + t_{3} \left(u_{ttt}\right)_{1,j+\frac{1}{2}}^{\pm};$
  \State \textcolor[rgb]{1.00,0.00,0.00}{$\ 6+,\ 6\times$} $\mathcal{F}_{3,j+\frac{1}{2}}^{\pm} = u_{4,j+\frac{1}{2}}^{\pm} + t_{1} \left(u_{t}\right)_{4,j+\frac{1}{2}}^{\pm} + t_{2} \left(u_{tt}\right)_{4,j+\frac{1}{2}}^{\pm} + t_{3} \left(u_{ttt}\right)_{4,j+\frac{1}{2}}^{\pm};$
  \State \textcolor[rgb]{1.00,0.00,0.00}{$\ 6+,\ 6\times$} $\mathcal{F}_{4,j+\frac{1}{2}}^{\pm} = u_{3,j+\frac{1}{2}}^{\pm} + t_{1} \left(u_{t}\right)_{3,j+\frac{1}{2}}^{\pm} + t_{2} \left(u_{tt}\right)_{3,j+\frac{1}{2}}^{\pm} + t_{3} \left(u_{ttt}\right)_{3,j+\frac{1}{2}}^{\pm};$
  \State \textcolor[rgb]{1.00,0.00,1.00}{$12+,\ 4\times$} $\widehat{\mathcal{F}}_{p,j+\frac{1}{2}} = \frac{1}{2} \left[ \mathcal{F}_{p,j+\frac{1}{2}}^{-} + \mathcal{F}_{p,j+\frac{1}{2}}^{+} - \left( u_{p,j+\frac{1}{2}}^{+} - u_{p,j+\frac{1}{2}}^{-}\right) \right];$
  \EndFor
  \State Remark: Gaussian quadrature, $\tilde{x}_{k}$-Gaussian points, $\omega_{k}$-weights
  \For{$j=1:J$}
  \State \textcolor[rgb]{1.00,0.00,1.00}{$\ 8+,\ 0\times$} $Q_{1,p,j} = \widehat{\mathcal{F}}_{p,j+\frac{1}{2}} - \widehat{\mathcal{F}}_{p,j-\frac{1}{2}},\ Q_{2,p,j} = \widehat{\mathcal{F}}_{p,j+\frac{1}{2}} + \widehat{\mathcal{F}}_{p,j-\frac{1}{2}};$
  \State $F_{p,j}^{\left(l\right)} = 0,\ l=0,1,2;$
  \For{$k=1:P$}
  \State \textcolor[rgb]{1.00,0.00,1.00}{$\ 8+,\ 8\times$} $u_{p,j,k} = u_{p,j}^{\left(0\right)} \left(t_{k} \right) + u_{p,j}^{\left(1\right)} \left(t_{k} \right) D_{1k} + u_{p,j}^{\left(2\right)} \left(t_{k} \right) D_{2k};$
  \State \textcolor[rgb]{1.00,0.00,1.00}{$\ 4+,\ 8\times$} $\left(u_{x}\right)_{p,j,k} = u_{p,j}^{\left(1\right)} \left(t_{k} \right) + u_{p,j}^{\left(2\right)} \left(t_{k} \right) D_{4k};\ \left(u_{xx}\right)_{p,j,k} = 2 u_{p,j}^{\left(2\right)} \left(t_{k} \right);\ \left(u_{xxx}\right)_{p,j,k} = 0;$
  \State \textcolor[rgb]{1.00,0.00,0.00}{$\ 4+,\ 5\times$} $\widetilde{\rho} = m - 2\lambda \rho_{j,k} = m - \Lambda \left( \left( u_{1,j,k} \right)^{2} + \left( u_{3,j,k} \right)^{2} - \left( u_{2,j,k} \right)^{2} - \left( u_{4,j,k} \right)^{2} \right);$
  \State \textcolor[rgb]{1.00,0.00,0.00}{$\ 0+,\ 4\times$} $\widetilde{M}_{1} = \widetilde{\rho} u_{3,j,k};\ \widetilde{M}_{2} = -\widetilde{\rho} u_{4,j,k};\ \widetilde{M}_{3} = -\widetilde{\rho} u_{1,j,k};\ \widetilde{M}_{4} = \widetilde{\rho} u_{2,j,k};$
  \State \textcolor[rgb]{1.00,0.00,0.00}{$\ 2+,\ 0\times$} $\left(u_{t}\right)_{1,j,k} = - \left(u_{x}\right)_{2,j,k} + \widetilde{M}_{1};\ \left(u_{t}\right)_{2,j,k} = - \left(u_{x}\right)_{1,j,k} + \widetilde{M}_{2};$
  \State \textcolor[rgb]{1.00,0.00,0.00}{$\ 2+,\ 0\times$} $\left(u_{t}\right)_{3,j,k} = - \left(u_{x}\right)_{4,j,k} + \widetilde{M}_{3};\ \left(u_{t}\right)_{4,j,k} = - \left(u_{x}\right)_{3,j,k} + \widetilde{M}_{4};$
  \State \textcolor[rgb]{1.00,0.00,0.00}{$\ 3+,\ 5\times$} $\widetilde{\rho}_{x} = 2\lambda \left(\rho_{x}\right)_{j,k} = \widehat{\Lambda} \sum \limits_{p=1}^{4} \left( -1 \right)^{p+1} \left( u_{p,j,k} \left(u_{x}\right)_{p,j,k} \right);$
  \State \textcolor[rgb]{1.00,0.00,0.00}{$\ 3+,\ 5\times$} $\widetilde{\rho}_{t} = 2\lambda \left(\rho_{t}\right)_{j,k} = \widehat{\Lambda} \sum \limits_{p=1}^{4} \left( -1 \right)^{p+1} \left( u_{p,j,k} \left(u_{t}\right)_{p,j,k} \right);$
  \State \textcolor[rgb]{1.00,0.00,0.00}{$\ 2+,\ 4\times$} $\widetilde{M}_{1,x} = - \widetilde{\rho}_{x} u_{3,j,k} + \widetilde{\rho} \left(u_{x}\right)_{3,j,k};\ \widetilde{M}_{2,x} = \widetilde{\rho}_{x} u_{4,j,k} - \widetilde{\rho} \left(u_{x}\right)_{4,j,k};$
  \State \textcolor[rgb]{1.00,0.00,0.00}{$\ 2+,\ 4\times$} $\widetilde{M}_{3,x} = \widetilde{\rho}_{x} u_{1,j,k} - \widetilde{\rho} \left(u_{x}\right)_{1,j,k};\ \widetilde{M}_{4,x} = - \widetilde{\rho}_{x} u_{2,j,k} + \widetilde{\rho} \left(u_{x}\right)_{2,j,k};$
  \State \textcolor[rgb]{1.00,0.00,0.00}{$\ 2+,\ 4\times$} $\widetilde{M}_{1,t} = - \widetilde{\rho}_{t} u_{3,j,k} + \widetilde{\rho} \left(u_{t}\right)_{3,j,k};\widetilde{M}_{2,t} = \widetilde{\rho}_{t} u_{4,j,k} - \widetilde{\rho} \left(u_{t}\right)_{4,j,k};$
  \State \textcolor[rgb]{1.00,0.00,0.00}{$\ 2+,\ 4\times$} $\widetilde{M}_{3,t} = \widetilde{\rho}_{t} u_{1,j,k} - \widetilde{\rho} \left(u_{t}\right)_{1,j,k};\ \widetilde{M}_{4,t} = - \widetilde{\rho}_{t} u_{2,j,k} + \widetilde{\rho} \left(u_{t}\right)_{2,j,k};$
  \State \textcolor[rgb]{1.00,0.00,0.00}{$\ 2+,\ 0\times$} $\left(u_{tx}\right)_{1,j,k} = - \left(u_{xx}\right)_{2,j,k} + \widetilde{M}_{1,x};\ \left(u_{tx}\right)_{2,j,k} = - \left(u_{xx}\right)_{1,j,k} + \widetilde{M}_{2,x};$
  \State \textcolor[rgb]{1.00,0.00,0.00}{$\ 2+,\ 0\times$} $\left(u_{tx}\right)_{3,j,k} = - \left(u_{xx}\right)_{4,j,k} + \widetilde{M}_{3,x};\ \left(u_{tx}\right)_{4,j,k} = - \left(u_{xx}\right)_{3,j,k} + \widetilde{M}_{4,x};$
  \State \textcolor[rgb]{1.00,0.00,0.00}{$\ 2+,\ 0\times$} $\left(u_{tt}\right)_{1,j,k} = - \left(u_{tx}\right)_{2,j,k} + \widetilde{M}_{1,t};\ \left(u_{tt}\right)_{2,j,k} = - \left(u_{tx}\right)_{1,j,k} + \widetilde{M}_{2,t};$
  \State \textcolor[rgb]{1.00,0.00,0.00}{$\ 2+,\ 0\times$} $\left(u_{tt}\right)_{3,j,k} = - \left(u_{tx}\right)_{4,j,k} + \widetilde{M}_{3,t};\ \left(u_{tt}\right)_{4,j,k} = - \left(u_{tx}\right)_{3,j,k} + \widetilde{M}_{4,t};$
  \State \textcolor[rgb]{1.00,0.00,0.00}{$\ 7+,\ 9\times$} $\widetilde{\rho}_{xx} = 2\lambda \left(\rho_{tt}\right)_{j,k} = \widehat{\Lambda} \sum \limits_{p=1}^{4} \left( -1 \right)^{p+1} \left( \left( \left(u_{x}\right)_{p,j,k} \right)^{2} + u_{p,j,k} \left(u_{xx}\right)_{p,j,k} \right);$
  \State \textcolor[rgb]{1.00,0.00,0.00}{$\ 7+,\ 9\times$} $\widetilde{\rho}_{tx} = 2\lambda \left(\rho_{tx}\right)_{j,k} = \widehat{\Lambda} \sum \limits_{p=1}^{4} \left( -1 \right)^{p+1} \left( \left(u_{x}\right)_{p,j,k} \left(u_{t}\right)_{p,j,k} + u_{p,j,k} \left(u_{tx}\right)_{p,j,k} \right);$
  \State \textcolor[rgb]{1.00,0.00,0.00}{$\ 7+,\ 9\times$} $\widetilde{\rho}_{tt} = 2\lambda \left(\rho_{xx}\right)_{j,k} = \widehat{\Lambda} \sum \limits_{p=1}^{4} \left( -1 \right)^{p+1} \left( \left( \left(u_{t}\right)_{p,j,k} \right)^{2} + u_{p,j,k} \left(u_{tt}\right)_{p,j,k} \right);$
  \State \textcolor[rgb]{1.00,0.00,0.00}{$\ 4+,\ 8\times$} $\widetilde{M}_{1,xx} = - \widetilde{\rho}_{xx} u_{3,j,k} - 2\widetilde{\rho}_{x} \left(u_{x}\right)_{3,j,k} + \widetilde{\rho} \left(u_{xx}\right)_{3,j,k};\ \widetilde{M}_{2,xx} = \widetilde{\rho}_{xx} u_{4,j,k} + 2\widetilde{\rho}_{x} \left(u_{x}\right)_{4,j,k} - \widetilde{\rho} \left(u_{xx}\right)_{4,j,k};$
  \State \textcolor[rgb]{1.00,0.00,0.00}{$\ 4+,\ 8\times$} $\widetilde{M}_{3,xx} = \widetilde{\rho}_{xx} u_{1,j,k} + 2\widetilde{\rho}_{x} \left(u_{x}\right)_{1,j,k} - \widetilde{\rho} \left(u_{xx}\right)_{1,j,k};\ \widetilde{M}_{4,xx} = - \widetilde{\rho}_{xx} u_{2,j,k} - 2\widetilde{\rho}_{x} \left(u_{x}\right)_{2,j,k} + \widetilde{\rho} \left(u_{xx}\right)_{2,j,k};$
  \State \textcolor[rgb]{1.00,0.00,0.00}{$\ 3+,\ 4\times$} $\widetilde{M}_{1,tx} = - \widetilde{\rho}_{tx} u_{3,j,k} - \widetilde{\rho}_{t} \left(u_{x}\right)_{3,j,k} - \widetilde{\rho}_{x} \left(u_{t}\right)_{3,j,k} + \widetilde{\rho} \left(u_{tx}\right)_{3,j,k};$
  \State \textcolor[rgb]{1.00,0.00,0.00}{$\ 3+,\ 4\times$} $\widetilde{M}_{2,tx} = \widetilde{\rho}_{tx} u_{4,j,k} + \widetilde{\rho}_{t} \left(u_{x}\right)_{4,j,k} + \widetilde{\rho}_{x} \left(u_{t}\right)_{4,j,k} - \widetilde{\rho} \left(u_{tx}\right)_{4,j,k};$
  \State \textcolor[rgb]{1.00,0.00,0.00}{$\ 3+,\ 4\times$} $\widetilde{M}_{3,tx} = \widetilde{\rho}_{tx} u_{1,j,k} + \widetilde{\rho}_{t} \left(u_{x}\right)_{1,j,k} + \widetilde{\rho}_{x} \left(u_{t}\right)_{1,j,k} - \widetilde{\rho} \left(u_{tx}\right)_{1,j,k};$
  \State \textcolor[rgb]{1.00,0.00,0.00}{$\ 3+,\ 4\times$} $\widetilde{M}_{4,tx} = - \widetilde{\rho}_{tx} u_{2,j,k} - \widetilde{\rho}_{t} \left(u_{x}\right)_{2,j,k} - \widetilde{\rho}_{x} \left(u_{t}\right)_{2,j,k} + \widetilde{\rho} \left(u_{tx}\right)_{2,j,k};$
  \State \textcolor[rgb]{1.00,0.00,0.00}{$\ 4+,\ 8\times$} $\widetilde{M}_{1,tt} = - \widetilde{\rho}_{tt} u_{3,j,k} - 2\widetilde{\rho}_{t} \left(u_{t}\right)_{3,j,k} + \widetilde{\rho} \left(u_{tt}\right)_{3,j,k};\ \widetilde{M}_{2,tt} = \widetilde{\rho}_{tt} u_{4,j,k} + 2\widetilde{\rho}_{t} \left(u_{t}\right)_{4,j,k} - \widetilde{\rho} \left(u_{tt}\right)_{4,j,k};$
  \State \textcolor[rgb]{1.00,0.00,0.00}{$\ 4+,\ 8\times$} $\widetilde{M}_{3,tt} = \widetilde{\rho}_{tt} u_{1,j,k} + 2\widetilde{\rho}_{t} \left(u_{t}\right)_{1,j,k} - \widetilde{\rho} \left(u_{tt}\right)_{1,j,k};\ \widetilde{M}_{4,tt} = - \widetilde{\rho}_{tt} u_{2,j,k} - 2\widetilde{\rho}_{t} \left(u_{t}\right)_{2,j,k} + \widetilde{\rho} \left(u_{tt}\right)_{2,j,k};$
  \State \textcolor[rgb]{1.00,0.00,0.00}{$\ 2+,\ 0\times$} $\left(u_{txx}\right)_{1,j,k} = - \left(u_{xxx}\right)_{2,j,k} + \widetilde{M}_{1,xx};\ \left(u_{txx}\right)_{2,j,k} = - \left(u_{xxx}\right)_{1,j,k} + \widetilde{M}_{2,xx};$
  \State \textcolor[rgb]{1.00,0.00,0.00}{$\ 2+,\ 0\times$} $\left(u_{txx}\right)_{3,j,k} = - \left(u_{xxx}\right)_{4,j,k} + \widetilde{M}_{3,xx};\ \left(u_{txx}\right)_{4,j,k} = - \left(u_{xxx}\right)_{3,j,k} + \widetilde{M}_{4,xx};$
  \State \textcolor[rgb]{1.00,0.00,0.00}{$\ 2+,\ 0\times$} $\left(u_{ttx}\right)_{1,j,k} = - \left(u_{txx}\right)_{2,j,k} + \widetilde{M}_{1,tx};\ \left(u_{ttx}\right)_{2,j,k} = - \left(u_{txx}\right)_{1,j,k} + \widetilde{M}_{2,tx};$
  \State \textcolor[rgb]{1.00,0.00,0.00}{$\ 2+,\ 0\times$} $\left(u_{ttx}\right)_{3,j,k} = - \left(u_{txx}\right)_{4,j,k} + \widetilde{M}_{3,tx};\ \left(u_{ttx}\right)_{4,j,k} = - \left(u_{txx}\right)_{3,j,k} + \widetilde{M}_{4,tx};$
  \State \textcolor[rgb]{1.00,0.00,0.00}{$\ 2+,\ 0\times$} $\left(u_{ttt}\right)_{1,j,k} = - \left(u_{ttx}\right)_{2,j,k} + \widetilde{M}_{1,tt};\ \left(u_{ttt}\right)_{2,j,k} = - \left(u_{ttx}\right)_{1,j,k} + \widetilde{M}_{2,tt};$
  \State \textcolor[rgb]{1.00,0.00,0.00}{$\ 2+,\ 0\times$} $\left(u_{ttt}\right)_{3,j,k} = - \left(u_{ttx}\right)_{4,j,k} + \widetilde{M}_{3,tt};\ \left(u_{ttt}\right)_{4,j,k} = - \left(u_{ttx}\right)_{3,j,k} + \widetilde{M}_{4,tt};$
  \State \textcolor[rgb]{1.00,0.00,0.00}{$\ 7+,13 \times$} $\widetilde{\rho}_{ttt} = 2\lambda \left(\rho_{ttt}\right)_{j,k} = \widehat{\Lambda} \sum \limits_{p=1}^{4} \left( -1 \right)^{p+1} \left( 3 \left(u_{t}\right)_{p,j,k} \left(u_{tt}\right)_{p,j,k} + u_{p,j,k} \left(u_{ttt}\right)_{p,j,k} \right);$
  \State \textcolor[rgb]{1.00,0.00,0.00}{$\ 3+,\ 6\times$} $\widetilde{M}_{1,ttt} = - \widetilde{\rho}_{ttt} u_{3,j,k} - 3\widetilde{\rho}_{tt} \left(u_{t}\right)_{3,j,k} - 3\widetilde{\rho}_{t} \left(u_{tt}\right)_{3,j,k} + \widetilde{\rho} \left(u_{ttt}\right)_{3,j,k};$
  \State \textcolor[rgb]{1.00,0.00,0.00}{$\ 3+,\ 6\times$} $\widetilde{M}_{2,ttt} = \widetilde{\rho}_{ttt} u_{4,j,k} + 3\widetilde{\rho}_{tt} \left(u_{t}\right)_{4,j,k} + 3\widetilde{\rho}_{t} \left(u_{tt}\right)_{4,j,k} - \widetilde{\rho} \left(u_{ttt}\right)_{4,j,k};$
  \State \textcolor[rgb]{1.00,0.00,0.00}{$\ 3+,\ 6\times$} $\widetilde{M}_{3,ttt} = \widetilde{\rho}_{ttt} u_{1,j,k} + 3\widetilde{\rho}_{tt} \left(u_{t}\right)_{1,j,k} + 3\widetilde{\rho}_{t} \left(u_{tt}\right)_{1,j,k} - \widetilde{\rho} \left(u_{ttt}\right)_{1,j,k};$
  \State \textcolor[rgb]{1.00,0.00,0.00}{$\ 3+,\ 6\times$} $\widetilde{M}_{4,ttt} = - \widetilde{\rho}_{ttt} u_{2,j,k} - 3\widetilde{\rho}_{tt} \left(u_{t}\right)_{2,j,k} - 3\widetilde{\rho}_{t} \left(u_{tt}\right)_{2,j,k} + \widetilde{\rho} \left(u_{ttt}\right)_{2,j,k};$
  \State \textcolor[rgb]{1.00,0.00,0.00}{$\ 3+,\ 4\times$} $\mathrm{temp}_{1,1} = W_{1k} \left(u_{2,j,k} + t_{1} \left(u_{t}\right)_{2,j,k} + t_{2} \left(u_{tt}\right)_{2,j,k} + t_{3} \left(u_{ttt}\right)_{2,j,k}\right);$
  \State \textcolor[rgb]{1.00,0.00,0.00}{$\ 3+,\ 4\times$} $\mathrm{temp}_{1,2} = W_{1k} \left(u_{1,j,k} + t_{1} \left(u_{t}\right)_{1,j,k} + t_{2} \left(u_{tt}\right)_{1,j,k} + t_{3} \left(u_{ttt}\right)_{1,j,k}\right);$
  \State \textcolor[rgb]{1.00,0.00,0.00}{$\ 3+,\ 4\times$} $\mathrm{temp}_{1,3} = W_{1k} \left(u_{4,j,k} + t_{1} \left(u_{t}\right)_{4,j,k} + t_{2} \left(u_{tt}\right)_{4,j,k} + t_{3} \left(u_{ttt}\right)_{4,j,k}\right);$
  \State \textcolor[rgb]{1.00,0.00,0.00}{$\ 3+,\ 4\times$} $\mathrm{temp}_{1,4} = W_{1k} \left(u_{3,j,k} + t_{1} \left(u_{t}\right)_{3,j,k} + t_{2} \left(u_{tt}\right)_{3,j,k} + t_{3} \left(u_{ttt}\right)_{3,j,k}\right);$
  \State \textcolor[rgb]{1.00,0.00,1.00}{$12 +,16 \times$} $\mathrm{temp}_{2,p} = W_{1k}\left(\widetilde{M}_{p} + t_{1} \widetilde{M}_{p,t} + t_{2} \widetilde{M}_{p,tt} + t_{3} \widetilde{M}_{p,ttt}\right);$
  \State \textcolor[rgb]{1.00,0.00,1.00}{$\ 4+,\ 0\times$} $F_{p,j}^{\left(0\right)} = F_{p,j}^{\left(0\right)} + \mathrm{temp}_{2,p};$
  \State \textcolor[rgb]{1.00,0.00,1.00}{$\ 8+,\ 4\times$} $F_{p,j}^{\left(1\right)} = F_{p,j}^{\left(1\right)} + \mathrm{temp}_{1,p} + D_{1k} \mathrm{temp}_{2,p};$
  \State \textcolor[rgb]{1.00,0.00,1.00}{$\ 8+,\ 8\times$} $F_{p,j}^{\left(2\right)} = F_{p,j}^{\left(2\right)} + D_{4k} \mathrm{temp}_{1,p} + D_{2k} \mathrm{temp}_{2,p};$
  \EndFor
  \State \textcolor[rgb]{1.00,0.00,1.00}{$\ 8+,\ 4\times$} $u_{p,j}^{\left(0\right)} \left( t_{k+1} \right) = u_{p,j}^{\left(0\right)} \left( t_{k} \right) + T_{0} \left[ F_{p,j}^{\left(0\right)} - Q_{1,p,j} \right]$
  \State \textcolor[rgb]{1.00,0.00,1.00}{$\ 8+,\ 8\times$} $u_{p,j}^{\left(1\right)} \left( t_{k+1} \right) = u_{p,j}^{\left(1\right)} \left( t_{k} \right) + T_{1} \left[ F_{p,j}^{\left(1\right)} - Q_{2,p,j} C_{1} \right];$
  \State \textcolor[rgb]{1.00,0.00,1.00}{$\ 8+,\ 8\times$} $u_{p,j}^{\left(2\right)} \left( t_{k+1} \right) = u_{p,j}^{\left(2\right)} \left( t_{k} \right) + T_{2} \left[ F_{p,j}^{\left(2\right)} - Q_{1,p,j} C_{2} \right];$
  \EndFor
  \State \textcolor[rgb]{1.00,0.00,0.00}{$1+,0\times$} time=time+$\tau$;
  \EndWhile
  \end{algorithmic}
\end{breakablealgorithm}

\begin{breakablealgorithm}
  \caption{Pseudo codes for $P^{2}$-TSDG}
  \begin{algorithmic}[1]
  \Require The given initial data $u_{p,j}^{\left(l\right)} \left( t_{0}=0 \right),\ \textcolor[rgb]{1.00,0.00,0.00}{p=1,2,3,4},\ l=0,1,2;$
  \Ensure $u_{p,j}^{\left(l\right)} \left( T \right),$ $T$: the final time;
  \State Set $a_{j}^{\left(0\right)} =a_{0} = \Delta x;\ a_{j}^{\left(1\right)} = a_{1} = \frac{\Delta x^{3}}{12};\ a_{j}^{\left(2\right)} = a_{2} = \frac{\Delta x^{5}}{180};\ C_{1}=\frac{\Delta x}{2};\ C_{2}=\frac{\Delta x^{2}}{6};\ C_{6}=\frac{C_{2}}{2};\ \Lambda=2 \lambda;\ \widehat{\Lambda}=4 \lambda;\ \tau = \frac{\mu \Delta x}{2*2+1};$
  \For{$k=1:P$}
  \State $D_{1k}=C_{1} \tilde{x}_{k};\ D_{2k}=D_{1k}^{2} - C_{6};\ D_{4k}=2D_{1k};\ W_{1k}=C_{1} \omega_{k};$
  \EndFor
  \State Set time$=0$; $k=-1$;
  \While{time$<T$}
  \State \textcolor[rgb]{1.00,0.00,0.00}{$1+,0\times$} $k=k+1$;
  \If{time$+\tau>T$}
  \State $\tau = T-$time;
  \EndIf
  \State \textcolor[rgb]{1.00,0.00,0.00}{$\ 2+,\ 6\times$} $\theta = \frac{1}{3},\ t_{1}=\frac{\tau}{4};\ t_{2}=\frac{\tau}{3\left(1-\theta\right)};\ t_{3}=\frac{\theta \tau}{2};\ t_{4} = \frac{\tau}{2}-t_{3};$
  \State \textcolor[rgb]{1.00,0.00,0.00}{$\ 0+,\ 6\times$} $T_{0}=\frac{t_{2}}{a_{0}};\ T_{1}=\frac{t_{2}}{a_{1}};\ T_{2}=\frac{t_{2}}{a_{2}};\ \widetilde{T}_{0}=\frac{\tau}{a_{0}};\ \widetilde{T}_{1}=\frac{\tau}{a_{1}};\ \widetilde{T}_{2}=\frac{\tau}{a_{2}};$
  \State \textcolor[rgb]{1.00,0.00,0.00}{Stage 1}
  \For{$j = 1:J$}
  \State \textcolor[rgb]{1.00,0.00,1.00}{$12 +,\ 8\times$} $L_{p} = u_{p,j}^{\left(0\right)} \left(t_{k} \right) + C_{2} u_{p,j}^{\left(2\right)} \left(t_{k} \right);\  R_{p} = C_{1} u_{p,j}^{\left(1\right)} \left(t_{k} \right);\ u_{p,j+\frac{1}{2}}^{-} = L_{p} + R_{p};\ u_{p,j-\frac{1}{2}}^{+} = L_{p} - R_{p};$
  \State \textcolor[rgb]{1.00,0.00,1.00}{$\ 8+,\ 4\times$} $L_{p} = u_{p,j}^{\left(1\right)} \left(t_{k} \right);\  R_{p} = \Delta x u_{p,j}^{\left(2\right)} \left(t_{k} \right);\ \left(u_{x}\right)_{p,j+\frac{1}{2}}^{-} = L_{p} + R_{p};\ \left(u_{x}\right)_{p,j-\frac{1}{2}}^{+} = L_{p} - R_{p};$
  \EndFor
  \State $u_{p,\frac{1}{2}}^{-} = 0;\ u_{p,J+\frac{1}{2}}^{+} = 0;\ \left(u_{x}\right)_{p,\frac{1}{2}}^{-} = 0;\ \left(u_{x}\right)_{p,J+\frac{1}{2}}^{+} = 0;$
  \For{$j = 0:J$}
  \State \textcolor[rgb]{1.00,0.00,0.00}{$\ 8+,10 \times$} $\widetilde{\rho}^{\pm} = m - 2\lambda \rho_{j+\frac{1}{2}}^{\pm} = m - \Lambda \sum \limits_{p=1}^{4} \left( -1 \right)^{p+1} \left( u_{p,j+\frac{1}{2}}^{\pm} \right)^{2};$
  \State \textcolor[rgb]{1.00,0.00,0.00}{$\ 4+,\ 4\times$} $\left(u_{t}\right)_{1,j+\frac{1}{2}}^{\pm} = - \left(u_{x}\right)_{2,j+\frac{1}{2}}^{\pm} + \widetilde{\rho}^{\pm} u_{3,j+\frac{1}{2}}^{\pm};\ \left(u_{t}\right)_{2,j+\frac{1}{2}}^{\pm} = - \left(u_{x}\right)_{1,j+\frac{1}{2}}^{\pm} - \widetilde{\rho}^{\pm} u_{4,j+\frac{1}{2}}^{\pm};$
  \State \textcolor[rgb]{1.00,0.00,0.00}{$\ 4+,\ 4\times$} $\left(u_{t}\right)_{3,j+\frac{1}{2}}^{\pm} = - \left(u_{x}\right)_{4,j+\frac{1}{2}}^{\pm} - \widetilde{\rho}^{\pm} u_{1,j+\frac{1}{2}}^{\pm};\ \left(u_{t}\right)_{4,j+\frac{1}{2}}^{\pm} = - \left(u_{x}\right)_{3,j+\frac{1}{2}}^{\pm} + \widetilde{\rho}^{\pm} u_{2,j+\frac{1}{2}}^{\pm};$
  \State \textcolor[rgb]{1.00,0.00,0.00}{$\ 3+,\ 1\times$} $\widehat{\mathcal{F}}_{1,1,j+\frac{1}{2}} = \frac{1}{2} \left[ u_{2,j+\frac{1}{2}}^{-} + u_{2,j+\frac{1}{2}}^{+} - \left( u_{1,j+\frac{1}{2}}^{+} - u_{1,j+\frac{1}{2}}^{-}\right) \right];$
  \State \textcolor[rgb]{1.00,0.00,0.00}{$\ 3+,\ 1\times$} $\widehat{\mathcal{F}}_{1,2,j+\frac{1}{2}} = \frac{1}{2} \left[ u_{1,j+\frac{1}{2}}^{-} + u_{1,j+\frac{1}{2}}^{+} - \left( u_{2,j+\frac{1}{2}}^{+} - u_{2,j+\frac{1}{2}}^{-}\right) \right];$
  \State \textcolor[rgb]{1.00,0.00,0.00}{$\ 3+,\ 1\times$} $\widehat{\mathcal{F}}_{1,3,j+\frac{1}{2}} = \frac{1}{2} \left[ u_{4,j+\frac{1}{2}}^{-} + u_{4,j+\frac{1}{2}}^{+} - \left( u_{3,j+\frac{1}{2}}^{+} - u_{3,j+\frac{1}{2}}^{-}\right) \right];$
  \State \textcolor[rgb]{1.00,0.00,0.00}{$\ 3+,\ 1\times$} $\widehat{\mathcal{F}}_{1,4,j+\frac{1}{2}} = \frac{1}{2} \left[ u_{3,j+\frac{1}{2}}^{-} + u_{3,j+\frac{1}{2}}^{+} - \left( u_{4,j+\frac{1}{2}}^{+} - u_{4,j+\frac{1}{2}}^{-}\right) \right];$
  \State \textcolor[rgb]{1.00,0.00,0.00}{$\ 3+,\ 1\times$} $\widehat{\mathcal{F}}_{2,1,j+\frac{1}{2}} = \frac{1}{2} \left[ \left(u_{t}\right)_{2,j+\frac{1}{2}}^{-} + \left(u_{t}\right)_{2,j+\frac{1}{2}}^{+} - \left( u_{1,j+\frac{1}{2}}^{+} - u_{1,j+\frac{1}{2}}^{-}\right) \right];$
  \State \textcolor[rgb]{1.00,0.00,0.00}{$\ 3+,\ 1\times$} $\widehat{\mathcal{F}}_{2,2,j+\frac{1}{2}} = \frac{1}{2} \left[ \left(u_{t}\right)_{1,j+\frac{1}{2}}^{-} + \left(u_{t}\right)_{1,j+\frac{1}{2}}^{+} - \left( u_{2,j+\frac{1}{2}}^{+} - u_{2,j+\frac{1}{2}}^{-}\right) \right];$
  \State \textcolor[rgb]{1.00,0.00,0.00}{$\ 3+,\ 1\times$} $\widehat{\mathcal{F}}_{2,3,j+\frac{1}{2}} = \frac{1}{2} \left[ \left(u_{t}\right)_{4,j+\frac{1}{2}}^{-} + \left(u_{t}\right)_{4,j+\frac{1}{2}}^{+} - \left( u_{3,j+\frac{1}{2}}^{+} - u_{3,j+\frac{1}{2}}^{-}\right) \right];$
  \State \textcolor[rgb]{1.00,0.00,0.00}{$\ 3+,\ 1\times$} $\widehat{\mathcal{F}}_{2,4,j+\frac{1}{2}} = \frac{1}{2} \left[ \left(u_{t}\right)_{3,j+\frac{1}{2}}^{-} + \left(u_{t}\right)_{3,j+\frac{1}{2}}^{+} - \left( u_{4,j+\frac{1}{2}}^{+} - u_{4,j+\frac{1}{2}}^{-}\right) \right];$
  \EndFor
  \For{$j=1:J$}
  \State \textcolor[rgb]{1.00,0.00,1.00}{$\ 8+,\ 0\times$} $Q_{1,p,j} = \widehat{\mathcal{F}}_{1,p,j+\frac{1}{2}} - \widehat{\mathcal{F}}_{1,p,j-\frac{1}{2}},\ Q_{2,p,j} = \widehat{\mathcal{F}}_{1,p,j+\frac{1}{2}} + \widehat{\mathcal{F}}_{1,p,j-\frac{1}{2}};$
  \State \textcolor[rgb]{1.00,0.00,1.00}{$\ 8+,\ 0\times$} $Q_{3,p,j} = \widehat{\mathcal{F}}_{p,j+\frac{1}{2}} - \widehat{\mathcal{F}}_{p,j-\frac{1}{2}},\ Q_{4,p,j} = \widehat{\mathcal{F}}_{p,j+\frac{1}{2}} + \widehat{\mathcal{F}}_{p,j-\frac{1}{2}};$
  \State $F_{1,p,j}^{\left(l\right)} = 0,\ l=0,1,2;\ F_{2,p,j}^{\left(l\right)} = 0,\ l=0,1,2;$
  \For{$k=1:P$}
  \State \textcolor[rgb]{1.00,0.00,1.00}{$12 +,12 \times$} $u_{p,j,k} = u_{p,j}^{\left(0\right)} \left(t_{k} \right) + u_{p,j}^{\left(1\right)} \left(t_{k} \right) D_{1k} + u_{p,j}^{\left(2\right)} \left(t_{k} \right) D_{2k};\ \left(u_{x}\right)_{p,j,k} = u_{p,j}^{\left(1\right)} \left(t_{k} \right) + u_{p,j}^{\left(2\right)} \left(t_{k} \right) D_{4k};$
  \State \textcolor[rgb]{1.00,0.00,0.00}{$\ 4+,\ 5\times$} $\widetilde{\rho} = m - 2\lambda \rho_{j,k} = m - \Lambda \left( \left( u_{1,j,k} \right)^{2} + \left( u_{3,j,k} \right)^{2} - \left( u_{2,j,k} \right)^{2} - \left( u_{4,j,k} \right)^{2} \right);$
  \State \textcolor[rgb]{1.00,0.00,0.00}{$\ 0+,\ 4\times$} $\widetilde{M}_{1} = \widetilde{\rho} u_{3,j,k};\ \widetilde{M}_{2} = -\widetilde{\rho} u_{4,j,k};\ \widetilde{M}_{3} = -\widetilde{\rho} u_{1,j,k};\ \widetilde{M}_{4} = \widetilde{\rho} u_{2,j,k};$
  \State \textcolor[rgb]{1.00,0.00,0.00}{$\ 2+,\ 0\times$} $\left(u_{t}\right)_{1,j,k} = - \left(u_{x}\right)_{2,j,k} + \widetilde{M}_{1};\ \left(u_{t}\right)_{2,j,k} = - \left(u_{x}\right)_{1,j,k} + \widetilde{M}_{2};$
  \State \textcolor[rgb]{1.00,0.00,0.00}{$\ 2+,\ 0\times$} $\left(u_{t}\right)_{3,j,k} = - \left(u_{x}\right)_{4,j,k} + \widetilde{M}_{3};\ \left(u_{t}\right)_{4,j,k} = - \left(u_{x}\right)_{3,j,k} + \widetilde{M}_{4};$
  \State \textcolor[rgb]{1.00,0.00,0.00}{$\ 3+,\ 5\times$} $\widetilde{\rho}_{t} = 2\lambda \left(\rho_{t}\right)_{j,k} = \widehat{\Lambda} \sum \limits_{p=1}^{4} \left( -1 \right)^{p+1} \left( u_{p,j,k} \left(u_{t}\right)_{p,j,k} \right);$
  \State \textcolor[rgb]{1.00,0.00,0.00}{$\ 2+,\ 4\times$} $\widetilde{M}_{1,t} = - \widetilde{\rho}_{t} u_{3,j,k} + \widetilde{\rho} \left(u_{t}\right)_{3,j,k};\ \widetilde{M}_{2,t} = \widetilde{\rho}_{t} u_{4,j,k} - \widetilde{\rho} \left(u_{t}\right)_{4,j,k};$
  \State \textcolor[rgb]{1.00,0.00,0.00}{$\ 2+,\ 4\times$} $\widetilde{M}_{3,t} = \widetilde{\rho}_{t} u_{1,j,k} - \widetilde{\rho} \left(u_{t}\right)_{1,j,k};\ \widetilde{M}_{4,t} = - \widetilde{\rho}_{t} u_{2,j,k} + \widetilde{\rho} \left(u_{t}\right)_{2,j,k};$
  \State \textcolor[rgb]{1.00,0.00,0.00}{$\ 0+,\ 4\times$} $\mathrm{temp}_{1,1} = W_{1k} u_{2,j,k};\ \mathrm{temp}_{1,2} = W_{1k} u_{1,j,k};\ \mathrm{temp}_{1,3} = W_{1k} u_{4,j,k};\ \mathrm{temp}_{1,4} = W_{1k} u_{3,j,k};$
  \State \textcolor[rgb]{1.00,0.00,1.00}{$\ 4+,\ 4\times$} $\mathrm{temp}_{2,p} = W_{1k}\widetilde{M}_{p};\ F_{1,p,j}^{\left(0\right)} = F_{1,p,j}^{\left(0\right)} + \mathrm{temp}_{2,p};$
  \State \textcolor[rgb]{1.00,0.00,1.00}{$16 +,12 \times$} $F_{1,p,j}^{\left(1\right)} = F_{1,p,j}^{\left(1\right)} + \mathrm{temp}_{1,p} + D_{1k} \mathrm{temp}_{2,p};\ F_{1,p,j}^{\left(2\right)} = F_{1,p,j}^{\left(2\right)} + D_{4k} \mathrm{temp}_{1,p} + D_{2k} \mathrm{temp}_{2,p};$
  \State \textcolor[rgb]{1.00,0.00,0.00}{$\ 0+,\ 4\times$} $\mathrm{temp}_{1,1} = W_{1k} \left(u_{t}\right)_{2,j,k};\ \mathrm{temp}_{1,2} = W_{1k} \left(u_{t}\right)_{1,j,k};\ \mathrm{temp}_{1,3} = W_{1k} \left(u_{t}\right)_{4,j,k};\ \mathrm{temp}_{1,4} = W_{1k} \left(u_{t}\right)_{3,j,k};$
  \State \textcolor[rgb]{1.00,0.00,1.00}{$\ 4+,\ 4\times$} $\mathrm{temp}_{2,p} = W_{1k} \widetilde{M}_{p,t};\ F_{2,p,j}^{\left(0\right)} = F_{2,p,j}^{\left(0\right)} + \mathrm{temp}_{2,p};$
  \State \textcolor[rgb]{1.00,0.00,1.00}{$16 +,12 \times$} $F_{2,p,j}^{\left(1\right)} = F_{2,p,j}^{\left(1\right)} + \mathrm{temp}_{1,p} + D_{1k} \mathrm{temp}_{2,p};\ F_{2,p,j}^{\left(2\right)} = F_{2,p,j}^{\left(2\right)} + D_{4k} \mathrm{temp}_{1,p} + D_{2k} \mathrm{temp}_{2,p};$
  \EndFor
  \State \textcolor[rgb]{1.00,0.00,1.00}{$12+,\ 8\times$} $I_{p,0}= F_{1,p,j}^{\left(0\right)} - Q_{1,p,j};\ I_{p,1}= F_{1,p,j}^{\left(1\right)} - Q_{2,p,j}C_{1};\ I_{p,2}= F_{1,p,j}^{\left(2\right)} - Q_{1,p,j}C_{2};$
  \State \textcolor[rgb]{1.00,0.00,1.00}{$12+,\ 8\times$} $\widehat{I}_{p,0}= F_{2,p,j}^{\left(0\right)}- Q_{3,p,j};\ \widehat{I}_{p,1}= F_{2,p,j}^{\left(1\right)} - Q_{4,p,j}C_{1};\ \widehat{I}_{p,2}= F_{2,p,j}^{\left(2\right)} - Q_{3,p,j}C_{2};;$
  \State \textcolor[rgb]{1.00,0.00,1.00}{$24+,24\times$} $v_{p,j}^{\left(l\right)} = u_{p,j}^{\left(l\right)} \left( t_{k} \right) + T_{l} \left( I_{p,l} + t_{1} \widehat{I}_{p,l} \right),\ l=0,1,2;$
  \EndFor
  \State \textcolor[rgb]{1.00,0.00,0.00}{Stage 2}
  \For{$j = 1:J$}
  \State \textcolor[rgb]{1.00,0.00,1.00}{$12 +,\ 8\times$} $L_{p} = v_{p,j}^{\left(0\right)} + C_{2} v_{p,j}^{\left(2\right)};\  R_{p} = C_{1} v_{p,j}^{\left(1\right)};\ v_{p,j+\frac{1}{2}}^{-} = L_{p} + R_{p};\ v_{p,j-\frac{1}{2}}^{+} = L_{p} - R_{p};$
  \State \textcolor[rgb]{1.00,0.00,1.00}{$\ 8+,\ 4\times$} $L_{p} = v_{p,j}^{\left(1\right)};\  R_{p} = \Delta x v_{p,j}^{\left(2\right)};\ \left(v_{x}\right)_{p,j+\frac{1}{2}}^{-} = L_{p} + R_{p};\ \left(v_{x}\right)_{p,j-\frac{1}{2}}^{+} = L_{p} - R_{p};$
  \EndFor
  \State $v_{\frac{1}{2}}^{-} = 0;\ v_{J+\frac{1}{2}}^{+} = 0;\ \left(v_{x}\right)_{\frac{1}{2}}^{-} = 0;\ \left(v_{x}\right)_{J+\frac{1}{2}}^{+} = 0;$
  \For{$j = 0:J$}
  \State \textcolor[rgb]{1.00,0.00,0.00}{$\ 8+,10 \times$} $\widetilde{\rho}^{\pm} = m - 2\lambda \rho_{j+\frac{1}{2}}^{\pm} = m - \Lambda \sum \limits_{p=1}^{4} \left( -1 \right)^{p+1} \left( v_{p,j+\frac{1}{2}}^{\pm} \right)^{2};$
  \State \textcolor[rgb]{1.00,0.00,0.00}{$\ 4+,\ 4\times$} $\left(v_{t}\right)_{1,j+\frac{1}{2}}^{\pm} = - \left(v_{x}\right)_{2,j+\frac{1}{2}}^{\pm} + \widetilde{\rho}^{\pm} v_{3,j+\frac{1}{2}}^{\pm};\ \left(v_{t}\right)_{2,j+\frac{1}{2}}^{\pm} = - \left(v_{x}\right)_{1,j+\frac{1}{2}}^{\pm} - \widetilde{\rho}^{\pm} v_{4,j+\frac{1}{2}}^{\pm};$
  \State \textcolor[rgb]{1.00,0.00,0.00}{$\ 4+,\ 4\times$} $\left(v_{t}\right)_{3,j+\frac{1}{2}}^{\pm} = - \left(v_{x}\right)_{4,j+\frac{1}{2}}^{\pm} - \widetilde{\rho}^{\pm} v_{1,j+\frac{1}{2}}^{\pm};\ \left(v_{t}\right)_{4,j+\frac{1}{2}}^{\pm} = - \left(v_{x}\right)_{3,j+\frac{1}{2}}^{\pm} + \widetilde{\rho}^{\pm} v_{2,j+\frac{1}{2}}^{\pm};$
  \State \textcolor[rgb]{1.00,0.00,0.00}{$\ 3+,\ 1\times$} $\widehat{\mathcal{F}}_{3,1,j+\frac{1}{2}} = \frac{1}{2} \left[ \left(v_{t}\right)_{2,j+\frac{1}{2}}^{-} + \left(v_{t}\right)_{2,j+\frac{1}{2}}^{+} - \left( v_{1,j+\frac{1}{2}}^{+} - v_{1,j+\frac{1}{2}}^{-}\right) \right];$
  \State \textcolor[rgb]{1.00,0.00,0.00}{$\ 3+,\ 1\times$} $\widehat{\mathcal{F}}_{3,2,j+\frac{1}{2}} = \frac{1}{2} \left[ \left(v_{t}\right)_{1,j+\frac{1}{2}}^{-} + \left(v_{t}\right)_{1,j+\frac{1}{2}}^{+} - \left( v_{2,j+\frac{1}{2}}^{+} - v_{2,j+\frac{1}{2}}^{-}\right) \right];$
  \State \textcolor[rgb]{1.00,0.00,0.00}{$\ 3+,\ 1\times$} $\widehat{\mathcal{F}}_{3,3,j+\frac{1}{2}} = \frac{1}{2} \left[ \left(v_{t}\right)_{4,j+\frac{1}{2}}^{-} + \left(v_{t}\right)_{4,j+\frac{1}{2}}^{+} - \left( v_{3,j+\frac{1}{2}}^{+} - v_{3,j+\frac{1}{2}}^{-}\right) \right];$
  \State \textcolor[rgb]{1.00,0.00,0.00}{$\ 3+,\ 1\times$} $\widehat{\mathcal{F}}_{3,4,j+\frac{1}{2}} = \frac{1}{2} \left[ \left(v_{t}\right)_{3,j+\frac{1}{2}}^{-} + \left(v_{t}\right)_{3,j+\frac{1}{2}}^{+} - \left( v_{4,j+\frac{1}{2}}^{+} - v_{4,j+\frac{1}{2}}^{-}\right) \right];$
  \EndFor
  \For{$j=1:J$}
  \State \textcolor[rgb]{1.00,0.00,1.00}{$\ 8+,\ 0\times$} $Q_{1,p,j} = \widehat{\mathcal{F}}_{3,p,j+\frac{1}{2}} - \widehat{\mathcal{F}}_{3,p,j-\frac{1}{2}},\ Q_{2,p,j} = \widehat{\mathcal{F}}_{3,p,j+\frac{1}{2}} + \widehat{\mathcal{F}}_{3,p,j-\frac{1}{2}};\ F_{p,j}^{\left(l\right)} = 0,\ l=0,1,2;$
  \For{$k=1:P$}
  \State \textcolor[rgb]{1.00,0.00,1.00}{$12 +,12 \times$} $u_{p,j,k} = v_{p,j}^{\left(0\right)} + v_{p,j}^{\left(1\right)} D_{1k} + v_{p,j}^{\left(2\right)} D_{2k} + v_{p,j}^{\left(3\right)} D_{3k};$
  \State \textcolor[rgb]{1.00,0.00,1.00}{$\ 8+,\ 8\times$} $\left(v_{x}\right)_{p,j,k} = v_{p,j}^{\left(1\right)} + v_{p,j}^{\left(2\right)} D_{4k} + v_{p,j}^{\left(3\right)} D_{5k};$
  \State \textcolor[rgb]{1.00,0.00,0.00}{$\ 4+,\ 5\times$} $\widetilde{\rho} = m - 2\lambda \rho_{j,k} = m - \Lambda \left( \left( v_{1,j,k} \right)^{2} + \left( v_{3,j,k} \right)^{2} - \left( v_{2,j,k} \right)^{2} - \left( v_{4,j,k} \right)^{2} \right);$
  \State \textcolor[rgb]{1.00,0.00,0.00}{$\ 0+,\ 4\times$} $\widetilde{M}_{1} = \widetilde{\rho} v_{3,j,k};\ \widetilde{M}_{2} = -\widetilde{\rho} v_{4,j,k};\ \widetilde{M}_{3} = -\widetilde{\rho} v_{1,j,k};\ \widetilde{M}_{4} = \widetilde{\rho} v_{2,j,k};$
  \State \textcolor[rgb]{1.00,0.00,0.00}{$\ 2+,\ 0\times$} $\left(v_{t}\right)_{1,j,k} = - \left(v_{x}\right)_{2,j,k} + \widetilde{M}_{1};\ \left(v_{t}\right)_{2,j,k} = - \left(v_{x}\right)_{1,j,k} + \widetilde{M}_{2};$
  \State \textcolor[rgb]{1.00,0.00,0.00}{$\ 2+,\ 0\times$} $\left(v_{t}\right)_{3,j,k} = - \left(v_{x}\right)_{4,j,k} + \widetilde{M}_{3};\ \left(v_{t}\right)_{4,j,k} = - \left(v_{x}\right)_{3,j,k} + \widetilde{M}_{4};$
  \State \textcolor[rgb]{1.00,0.00,0.00}{$\ 3+,\ 5\times$} $\widetilde{\rho}_{t} = 2\lambda \left(\rho_{t}\right)_{j,k} = \widehat{\Lambda} \sum \limits_{p=1}^{4} \left( -1 \right)^{p+1} \left( v_{p,j,k} \left(v_{t}\right)_{p,j,k} \right);$
  \State \textcolor[rgb]{1.00,0.00,0.00}{$\ 2+,\ 4\times$} $\widetilde{M}_{1,t} = - \widetilde{\rho}_{t} v_{3,j,k} + \widetilde{\rho} \left(v_{t}\right)_{3,j,k};\ \widetilde{M}_{2,t} = \widetilde{\rho}_{t} v_{4,j,k} - \widetilde{\rho} \left(v_{t}\right)_{4,j,k};$
  \State \textcolor[rgb]{1.00,0.00,0.00}{$\ 2+,\ 4\times$} $\widetilde{M}_{3,t} = \widetilde{\rho}_{t} v_{1,j,k} - \widetilde{\rho} \left(v_{t}\right)_{1,j,k};\ \widetilde{M}_{4,t} = - \widetilde{\rho}_{t} v_{2,j,k} + \widetilde{\rho} \left(v_{t}\right)_{2,j,k};$
  \State \textcolor[rgb]{1.00,0.00,0.00}{$\ 0+,\ 4\times$} $\mathrm{temp}_{1,1} = W_{1k} \left(v_{t}\right)_{2,j,k};\ \mathrm{temp}_{1,2} = W_{1k} \left(v_{t}\right)_{1,j,k};\ \mathrm{temp}_{1,3} = W_{1k} \left(v_{t}\right)_{4,j,k};\ \mathrm{temp}_{1,4} = W_{1k} \left(v_{t}\right)_{3,j,k};$
  \State \textcolor[rgb]{1.00,0.00,1.00}{$\ 4+,\ 4\times$} $\mathrm{temp}_{2,p} = W_{1k} \widetilde{M}_{p,t};\ F_{p,j}^{\left(0\right)} = F_{p,j}^{\left(0\right)} + \mathrm{temp}_{2,p};$
  \State \textcolor[rgb]{1.00,0.00,1.00}{$16 +,12 \times$} $F_{p,j}^{\left(1\right)} = F_{p,j}^{\left(1\right)} + \mathrm{temp}_{1,p} + D_{1k} \mathrm{temp}_{2,p};\ F_{p,j}^{\left(2\right)} = F_{p,j}^{\left(2\right)} + D_{4k} \mathrm{temp}_{1,p} + D_{2k} \mathrm{temp}_{2,p};$
  \EndFor
  \State \textcolor[rgb]{1.00,0.00,1.00}{$12+,\ 8\times$} $\widetilde{I}_{p,0}= F_{p,j}^{\left(0\right)} - Q_{1,p,j};\ \widetilde{I}_{p,1}= F_{p,j}^{\left(1\right)} - Q_{2,p,j}C_{1};\ \widetilde{I}_{p,2}= F_{p,j}^{\left(2\right)} - Q_{1,p,j}C_{2};$
  \State \textcolor[rgb]{1.00,0.00,1.00}{$36+,36\times$} $u_{p,j}^{\left(l\right)} \left( t_{k+1} \right) = u_{p,j}^{\left(l\right)} \left( t_{k} \right) + \widetilde{T}_{l} \left( I_{p,l} + t_{3} \widehat{I}_{p,l} + t_{4} \widetilde{I}_{p,l} \right),\ l=0,1,2;$
  \EndFor
  \State \textcolor[rgb]{1.00,0.00,0.00}{$1+,0\times$} time=time+$\tau$;
  \EndWhile
  \end{algorithmic}
\end{breakablealgorithm}

\begin{breakablealgorithm}
  \caption{Pseudo codes for $P^{2}$-RKDG}
  \begin{algorithmic}[1]
  \Require The given initial data $u_{p,j}^{\left(l\right)} \left( t_{0}=0 \right),\ \textcolor[rgb]{1.00,0.00,0.00}{p=1,2,3,4},\ l=0,1,2;$
  \Ensure $u_{p,j}^{\left(l\right)} \left( T \right),$ $T$: the final time;
  \State Set $a_{j}^{\left(0\right)} =a_{0} = \Delta x;\ a_{j}^{\left(1\right)} = a_{1} = \frac{\Delta x^{3}}{12};\ a_{j}^{\left(2\right)} = a_{2} = \frac{\Delta x^{5}}{180};\ C_{1}=\frac{\Delta x}{2};\ C_{2}=\frac{\Delta x^{2}}{6};\ C_{6}=\frac{C_{2}}{2};\ \Lambda=2 \lambda;\ \tau = \frac{\mu \Delta x}{2*2+1};$
  \For{$k=1:P$}
  \State $D_{1k}=C_{1} \tilde{x}_{k};\ D_{2k}=D_{1k}^{2} - C_{6};\ D_{4k}=2D_{1k};\ W_{1k}=C_{1} \omega_{k};$
  \EndFor
  \State Set time$=0$; $k=-1$;
  \While{time$<T$}
  \State \textcolor[rgb]{1.00,0.00,0.00}{$1+,0\times$} $k=k+1$;
  \If{time$+\tau>T$}
  \State $\tau = T-$time;
  \EndIf
  \State \textcolor[rgb]{1.00,0.00,0.00}{$0+,7\times$} $t_{1}=\frac{\tau}{2};\ T_{0}=\frac{t_{1}}{a_{0}};\ T_{1}=\frac{t_{1}}{a_{1}};\ T_{2}=\frac{t_{1}}{a_{2}};\ \widetilde{T}_{0}=\frac{\tau}{a_{0}};\ \widetilde{T}_{1}=\frac{\tau}{a_{1}};\ \widetilde{T}_{2}=\frac{\tau}{a_{2}};$
  \State \textcolor[rgb]{1.00,0.00,0.00}{Stage 1}
  \For{$j = 1:J$}
  \State \textcolor[rgb]{1.00,0.00,1.00}{$12 +,\ 8\times$} $L_{p} = u_{p,j}^{\left(0\right)} \left(t_{k} \right) + C_{2} u_{p,j}^{\left(2\right)} \left(t_{k} \right);\  R_{p} = C_{1} u_{p,j}^{\left(1\right)} \left(t_{k} \right);\ u_{p,j+\frac{1}{2}}^{-} = L_{p} + R_{p};\ u_{p,j-\frac{1}{2}}^{+} = L_{p} - R_{p};$
  \EndFor
  \State $u_{\frac{1}{2}}^{-} = 0;\ u_{J+\frac{1}{2}}^{+} = 0;$
  \For{$j = 0:J$}
  \State \textcolor[rgb]{1.00,0.00,0.00}{$\ 3+,\ 1\times$} $\widehat{\mathcal{F}}_{1,j+\frac{1}{2}} = \frac{1}{2} \left[ u_{2,j+\frac{1}{2}}^{-} + u_{2,j+\frac{1}{2}}^{+} - \left( u_{1,j+\frac{1}{2}}^{+} - u_{1,j+\frac{1}{2}}^{-}\right) \right];$
  \State \textcolor[rgb]{1.00,0.00,0.00}{$\ 3+,\ 1\times$} $\widehat{\mathcal{F}}_{2,j+\frac{1}{2}} = \frac{1}{2} \left[ u_{1,j+\frac{1}{2}}^{-} + u_{1,j+\frac{1}{2}}^{+} - \left( u_{2,j+\frac{1}{2}}^{+} - u_{2,j+\frac{1}{2}}^{-}\right) \right];$
  \State \textcolor[rgb]{1.00,0.00,0.00}{$\ 3+,\ 1\times$} $\widehat{\mathcal{F}}_{3,j+\frac{1}{2}} = \frac{1}{2} \left[ u_{4,j+\frac{1}{2}}^{-} + u_{4,j+\frac{1}{2}}^{+} - \left( u_{3,j+\frac{1}{2}}^{+} - u_{3,j+\frac{1}{2}}^{-}\right) \right];$
  \State \textcolor[rgb]{1.00,0.00,0.00}{$\ 3+,\ 1\times$} $\widehat{\mathcal{F}}_{4,j+\frac{1}{2}} = \frac{1}{2} \left[ u_{3,j+\frac{1}{2}}^{-} + u_{3,j+\frac{1}{2}}^{+} - \left( u_{4,j+\frac{1}{2}}^{+} - u_{4,j+\frac{1}{2}}^{-}\right) \right];$
  \EndFor
  \For{$j=1:J$}
  \State \textcolor[rgb]{1.00,0.00,1.00}{$\ 8+,\ 0\times$} $Q_{1,p,j} = \widehat{\mathcal{F}}_{p,j+\frac{1}{2}} - \widehat{\mathcal{F}}_{p,j-\frac{1}{2}},\ Q_{2,p,j} = \widehat{\mathcal{F}}_{p,j+\frac{1}{2}} + \widehat{\mathcal{F}}_{p,j-\frac{1}{2}};\ F_{p,j}^{\left(l\right)} = 0,\ l=0,1,2;$
  \For{$k=1:P$}
  \State \textcolor[rgb]{1.00,0.00,1.00}{$\ 8+,\ 8\times$} $u_{p,j,k} = u_{p,j}^{\left(0\right)} \left(t_{k} \right) + u_{p,j}^{\left(1\right)} \left(t_{k} \right) D_{1k} + u_{p,j}^{\left(2\right)} \left(t_{k} \right) D_{2k};$
  \State \textcolor[rgb]{1.00,0.00,0.00}{$\ 4+,\ 5\times$} $\widetilde{\rho} = m - 2\lambda \rho_{j,k} = m - \Lambda \left( \left( u_{1,j,k} \right)^{2} + \left( u_{3,j,k} \right)^{2} - \left( u_{2,j,k} \right)^{2} - \left( u_{4,j,k} \right)^{2} \right);$
  \State \textcolor[rgb]{1.00,0.00,0.00}{$\ 0+,\ 4\times$} $\mathrm{temp}_{1,1} = W_{1k} u_{2,j,k};\ \mathrm{temp}_{1,2} = W_{1k} u_{1,j,k};\ \mathrm{temp}_{1,3} = W_{1k} u_{4,j,k};\ \mathrm{temp}_{1,4} = W_{1k} u_{3,j,k};$
  \State \textcolor[rgb]{1.00,0.00,0.00}{$\ 0+,\ 8\times$} $\mathrm{temp}_{2,1} = W_{1k} \widetilde{\rho} u_{3,j,k};\ \mathrm{temp}_{2,2} = -W_{1k} \widetilde{\rho} u_{4,j,k};\ \mathrm{temp}_{2,3} = -W_{1k} \widetilde{\rho} u_{1,j,k};\ \mathrm{temp}_{2,4} = W_{1k} \widetilde{\rho} u_{2,j,k};$
  \State \textcolor[rgb]{1.00,0.00,1.00}{$20 +,12 \times$} $F_{p,j}^{\left(0\right)} = F_{p,j}^{\left(0\right)} + \mathrm{temp}_{2,p};\ F_{p,j}^{\left(1\right)} = F_{p,j}^{\left(1\right)} + \mathrm{temp}_{1,p} + D_{1k} \mathrm{temp}_{2,p};\ F_{p,j}^{\left(2\right)} = F_{p,j}^{\left(2\right)} + D_{4k} \mathrm{temp}_{1,p} + D_{2k} \mathrm{temp}_{2,p};$
  \EndFor
  \State \textcolor[rgb]{1.00,0.00,1.00}{$16 +,12 \times$} $v_{p,j}^{\left(0\right)} = u_{p,j}^{\left(0\right)} \left( t_{k} \right) + T_{0} \left[ F_{p,j}^{\left(0\right)} - Q_{1,p,j} \right];\ v_{p,j}^{\left(1\right)} = u_{p,j}^{\left(1\right)} \left( t_{k} \right) + T_{1} \left[ F_{p,j}^{\left(1\right)} - Q_{2,p,j}C_{1} \right];$
  \State \textcolor[rgb]{1.00,0.00,1.00}{$\ 8+,\ 8\times$} $v_{p,j}^{\left(2\right)} = u_{p,j}^{\left(2\right)} \left( t_{k} \right) + T_{2} \left[ F_{p,j}^{\left(2\right)} - Q_{1,p,j}C_{2} \right];$
  \EndFor
  \State \textcolor[rgb]{1.00,0.00,0.00}{Stage 2}
  \For{$j = 1:J$}
  \State \textcolor[rgb]{1.00,0.00,1.00}{$12 +,\ 8\times$} $L_{p} = v_{p,j}^{\left(0\right)} + C_{2} v_{p,j}^{\left(2\right)};\  R_{p} = C_{1} v_{p,j}^{\left(1\right)};\ v_{p,j+\frac{1}{2}}^{-} = L_{p} + R_{p};\ v_{p,j-\frac{1}{2}}^{+} = L_{p} - R_{p};$
  \EndFor
  \State $v_{\frac{1}{2}}^{-} = 0;\ v_{J+\frac{1}{2}}^{+} = 0;$
  \For{$j = 0:J$}
  \State \textcolor[rgb]{1.00,0.00,0.00}{$\ 3+,\ 1\times$} $\widehat{\mathcal{F}}_{1,j+\frac{1}{2}} = \frac{1}{2} \left[ v_{2,j+\frac{1}{2}}^{-} + v_{2,j+\frac{1}{2}}^{+} - \left( v_{1,j+\frac{1}{2}}^{+} - v_{1,j+\frac{1}{2}}^{-}\right) \right];$
  \State \textcolor[rgb]{1.00,0.00,0.00}{$\ 3+,\ 1\times$} $\widehat{\mathcal{F}}_{2,j+\frac{1}{2}} = \frac{1}{2} \left[ v_{1,j+\frac{1}{2}}^{-} + v_{1,j+\frac{1}{2}}^{+} - \left( v_{2,j+\frac{1}{2}}^{+} - v_{2,j+\frac{1}{2}}^{-}\right) \right];$
  \State \textcolor[rgb]{1.00,0.00,0.00}{$\ 3+,\ 1\times$} $\widehat{\mathcal{F}}_{3,j+\frac{1}{2}} = \frac{1}{2} \left[ v_{4,j+\frac{1}{2}}^{-} + v_{4,j+\frac{1}{2}}^{+} - \left( v_{3,j+\frac{1}{2}}^{+} - v_{3,j+\frac{1}{2}}^{-}\right) \right];$
  \State \textcolor[rgb]{1.00,0.00,0.00}{$\ 3+,\ 1\times$} $\widehat{\mathcal{F}}_{4,j+\frac{1}{2}} = \frac{1}{2} \left[ v_{3,j+\frac{1}{2}}^{-} + v_{3,j+\frac{1}{2}}^{+} - \left( v_{4,j+\frac{1}{2}}^{+} - v_{4,j+\frac{1}{2}}^{-}\right) \right];$
  \EndFor
  \For{$j=1:J$}
  \State \textcolor[rgb]{1.00,0.00,1.00}{$\ 8+,\ 0\times$} $Q_{1,p,j} = \widehat{\mathcal{F}}_{p,j+\frac{1}{2}} - \widehat{\mathcal{F}}_{p,j-\frac{1}{2}},\ Q_{2,p,j} = \widehat{\mathcal{F}}_{p,j+\frac{1}{2}} + \widehat{\mathcal{F}}_{p,j-\frac{1}{2}};\ F_{p,j}^{\left(l\right)} = 0,\ l=0,1,2;$
  \For{$k=1:P$}
  \State \textcolor[rgb]{1.00,0.00,1.00}{$\ 8+,\ 8\times$} $v_{p,j,k} = v_{p,j}^{\left(0\right)} \left(t_{k} \right) + v_{p,j}^{\left(1\right)} \left(t_{k} \right) D_{1k} + v_{p,j}^{\left(2\right)} \left(t_{k} \right) D_{2k};$
  \State \textcolor[rgb]{1.00,0.00,0.00}{$\ 4+,\ 5\times$} $\widetilde{\rho} = m - 2\lambda \rho_{j,k} = m - \Lambda \left( \left( v_{1,j,k} \right)^{2} + \left( v_{3,j,k} \right)^{2} - \left( v_{2,j,k} \right)^{2} - \left( v_{4,j,k} \right)^{2} \right);$
  \State \textcolor[rgb]{1.00,0.00,0.00}{$\ 0+,\ 4\times$} $\mathrm{temp}_{1,1} = W_{1k} v_{2,j,k};\ \mathrm{temp}_{1,2} = W_{1k} v_{1,j,k};\ \mathrm{temp}_{1,3} = W_{1k} v_{4,j,k};\ \mathrm{temp}_{1,4} = W_{1k} v_{3,j,k};$
  \State \textcolor[rgb]{1.00,0.00,0.00}{$\ 0+,\ 8\times$} $\mathrm{temp}_{2,1} = W_{1k} \widetilde{\rho} v_{3,j,k};\ \mathrm{temp}_{2,2} = -W_{1k} \widetilde{\rho} v_{4,j,k};\ \mathrm{temp}_{2,3} = -W_{1k} \widetilde{\rho} v_{1,j,k};\ \mathrm{temp}_{2,4} = W_{1k} \widetilde{\rho} v_{2,j,k};$
  \State \textcolor[rgb]{1.00,0.00,1.00}{$20 +,12 \times$} $F_{p,j}^{\left(0\right)} = F_{p,j}^{\left(0\right)} + \mathrm{temp}_{2,p};\ F_{p,j}^{\left(1\right)} = F_{p,j}^{\left(1\right)} + \mathrm{temp}_{1,p} + D_{1k} \mathrm{temp}_{2,p};\ F_{p,j}^{\left(2\right)} = F_{p,j}^{\left(2\right)} + D_{4k} \mathrm{temp}_{1,p} + D_{2k} \mathrm{temp}_{2,p};$
  \EndFor
  \State \textcolor[rgb]{1.00,0.00,1.00}{$16 +,12 \times$} $p_{p,j}^{\left(0\right)} = u_{p,j}^{\left(0\right)} \left( t_{k} \right) + T_{0} \left[ F_{p,j}^{\left(0\right)} - Q_{1,p,j} \right];\ p_{p,j}^{\left(1\right)} = u_{p,j}^{\left(1\right)} \left( t_{k} \right) + T_{1} \left[ F_{p,j}^{\left(1\right)} - Q_{2,p,j}C_{1} \right];$
  \State \textcolor[rgb]{1.00,0.00,1.00}{$\ 8+,\ 8\times$} $p_{p,j}^{\left(2\right)} = u_{p,j}^{\left(2\right)} \left( t_{k} \right) + T_{2} \left[ F_{p,j}^{\left(2\right)} - Q_{1,p,j}C_{2} \right];$
  \EndFor
  \State \textcolor[rgb]{1.00,0.00,0.00}{Stage 3}
  \For{$j = 1:J$}
  \State \textcolor[rgb]{1.00,0.00,1.00}{$12 +,\ 8\times$} $L_{p} = p_{p,j}^{\left(0\right)} + C_{2} p_{p,j}^{\left(2\right)};\  R_{p} = C_{1} p_{p,j}^{\left(1\right)};\ p_{p,j+\frac{1}{2}}^{-} = L_{p} + R_{p};\ p_{p,j-\frac{1}{2}}^{+} = L_{p} - R_{p};$
  \EndFor
  \State $p_{\frac{1}{2}}^{-} = 0;\ p_{J+\frac{1}{2}}^{+} = 0;$
  \For{$j = 0:J$}
  \State \textcolor[rgb]{1.00,0.00,0.00}{$\ 3+,\ 1\times$} $\widehat{\mathcal{F}}_{1,j+\frac{1}{2}} = \frac{1}{2} \left[ p_{2,j+\frac{1}{2}}^{-} + p_{2,j+\frac{1}{2}}^{+} - \left( p_{1,j+\frac{1}{2}}^{+} - p_{1,j+\frac{1}{2}}^{-}\right) \right];$
  \State \textcolor[rgb]{1.00,0.00,0.00}{$\ 3+,\ 1\times$} $\widehat{\mathcal{F}}_{2,j+\frac{1}{2}} = \frac{1}{2} \left[ p_{1,j+\frac{1}{2}}^{-} + p_{1,j+\frac{1}{2}}^{+} - \left( p_{2,j+\frac{1}{2}}^{+} - p_{2,j+\frac{1}{2}}^{-}\right) \right];$
  \State \textcolor[rgb]{1.00,0.00,0.00}{$\ 3+,\ 1\times$} $\widehat{\mathcal{F}}_{3,j+\frac{1}{2}} = \frac{1}{2} \left[ p_{4,j+\frac{1}{2}}^{-} + p_{4,j+\frac{1}{2}}^{+} - \left( p_{3,j+\frac{1}{2}}^{+} - p_{3,j+\frac{1}{2}}^{-}\right) \right];$
  \State \textcolor[rgb]{1.00,0.00,0.00}{$\ 3+,\ 1\times$} $\widehat{\mathcal{F}}_{4,j+\frac{1}{2}} = \frac{1}{2} \left[ p_{3,j+\frac{1}{2}}^{-} + p_{3,j+\frac{1}{2}}^{+} - \left( p_{4,j+\frac{1}{2}}^{+} - p_{4,j+\frac{1}{2}}^{-}\right) \right];$
  \EndFor
  \For{$j=1:J$}
  \State \textcolor[rgb]{1.00,0.00,1.00}{$\ 8+,\ 0\times$} $Q_{1,p,j} = \widehat{\mathcal{F}}_{p,j+\frac{1}{2}} - \widehat{\mathcal{F}}_{p,j-\frac{1}{2}},\ Q_{2,p,j} = \widehat{\mathcal{F}}_{p,j+\frac{1}{2}} + \widehat{\mathcal{F}}_{p,j-\frac{1}{2}};\ F_{p,j}^{\left(l\right)} = 0,\ l=0,1,2;$
  \For{$k=1:P$}
  \State \textcolor[rgb]{1.00,0.00,1.00}{$\ 8+,\ 8\times$} $p_{p,j,k} = p_{p,j}^{\left(0\right)} \left(t_{k} \right) + p_{p,j}^{\left(1\right)} \left(t_{k} \right) D_{1k} + p_{p,j}^{\left(2\right)} \left(t_{k} \right) D_{2k};$
  \State \textcolor[rgb]{1.00,0.00,0.00}{$\ 4+,\ 5\times$} $\widetilde{\rho} = m - 2\lambda \rho_{j,k} = m - \Lambda \left( \left( p_{1,j,k} \right)^{2} + \left( p_{3,j,k} \right)^{2} - \left( p_{2,j,k} \right)^{2} - \left( p_{4,j,k} \right)^{2} \right);$
  \State \textcolor[rgb]{1.00,0.00,0.00}{$\ 0+,\ 4\times$} $\mathrm{temp}_{1,1} = W_{1k} p_{2,j,k};\ \mathrm{temp}_{1,2} = W_{1k} p_{1,j,k};\ \mathrm{temp}_{1,3} = W_{1k} p_{4,j,k};\ \mathrm{temp}_{1,4} = W_{1k} p_{3,j,k};$
  \State \textcolor[rgb]{1.00,0.00,0.00}{$\ 0+,\ 8\times$} $\mathrm{temp}_{2,1} = W_{1k} \widetilde{\rho} p_{3,j,k};\ \mathrm{temp}_{2,2} = -W_{1k} \widetilde{\rho} p_{4,j,k};\ \mathrm{temp}_{2,3} = -W_{1k} \widetilde{\rho} p_{1,j,k};\ \mathrm{temp}_{2,4} = W_{1k} \widetilde{\rho} p_{2,j,k};$
  \State \textcolor[rgb]{1.00,0.00,1.00}{$20 +,12 \times$} $F_{p,j}^{\left(0\right)} = F_{p,j}^{\left(0\right)} + \mathrm{temp}_{2,p};\ F_{p,j}^{\left(1\right)} = F_{p,j}^{\left(1\right)} + \mathrm{temp}_{1,p} + D_{1k} \mathrm{temp}_{2,p};\ F_{p,j}^{\left(2\right)} = F_{p,j}^{\left(2\right)} + D_{4k} \mathrm{temp}_{1,p} + D_{2k} \mathrm{temp}_{2,p};$
  \EndFor
  \State \textcolor[rgb]{1.00,0.00,1.00}{$16 +,12 \times$} $q_{p,j}^{\left(0\right)} = u_{p,j}^{\left(0\right)} \left( t_{k} \right) + \widetilde{T}_{0} \left[ F_{p,j}^{\left(0\right)} - Q_{1,p,j} \right];\ q_{p,j}^{\left(1\right)} = u_{p,j}^{\left(1\right)} \left( t_{k} \right) + \widetilde{T}_{1} \left[ F_{p,j}^{\left(1\right)} - Q_{2,p,j}C_{1} \right];$
  \State \textcolor[rgb]{1.00,0.00,1.00}{$\ 8+,\ 8\times$} $q_{p,j}^{\left(2\right)} = u_{p,j}^{\left(2\right)} \left( t_{k} \right) + \widetilde{T}_{2} \left[ F_{p,j}^{\left(2\right)} - Q_{1,p,j}C_{2} \right];$
  \EndFor
  \State \textcolor[rgb]{1.00,0.00,0.00}{Stage 4}
  \For{$j = 1:J$}
  \State \textcolor[rgb]{1.00,0.00,1.00}{$12 +,\ 8\times$} $L_{p} = q_{p,j}^{\left(0\right)} + C_{2} q_{p,j}^{\left(2\right)};\  R_{p} = C_{1} q_{p,j}^{\left(1\right)};\ q_{p,j+\frac{1}{2}}^{-} = L_{p} + R_{p};\ q_{p,j-\frac{1}{2}}^{+} = L_{p} - R_{p};$
  \EndFor
  \State $q_{\frac{1}{2}}^{-} = 0;\ q_{J+\frac{1}{2}}^{+} = 0;$
  \For{$j = 0:J$}
  \State \textcolor[rgb]{1.00,0.00,0.00}{$\ 3+,\ 1\times$} $\widehat{\mathcal{F}}_{1,j+\frac{1}{2}} = \frac{1}{2} \left[ q_{2,j+\frac{1}{2}}^{-} + q_{2,j+\frac{1}{2}}^{+} - \left( q_{1,j+\frac{1}{2}}^{+} - q_{1,j+\frac{1}{2}}^{-}\right) \right];$
  \State \textcolor[rgb]{1.00,0.00,0.00}{$\ 3+,\ 1\times$} $\widehat{\mathcal{F}}_{2,j+\frac{1}{2}} = \frac{1}{2} \left[ q_{1,j+\frac{1}{2}}^{-} + q_{1,j+\frac{1}{2}}^{+} - \left( q_{2,j+\frac{1}{2}}^{+} - q_{2,j+\frac{1}{2}}^{-}\right) \right];$
  \State \textcolor[rgb]{1.00,0.00,0.00}{$\ 3+,\ 1\times$} $\widehat{\mathcal{F}}_{3,j+\frac{1}{2}} = \frac{1}{2} \left[ q_{4,j+\frac{1}{2}}^{-} + q_{4,j+\frac{1}{2}}^{+} - \left( q_{3,j+\frac{1}{2}}^{+} - q_{3,j+\frac{1}{2}}^{-}\right) \right];$
  \State \textcolor[rgb]{1.00,0.00,0.00}{$\ 3+,\ 1\times$} $\widehat{\mathcal{F}}_{4,j+\frac{1}{2}} = \frac{1}{2} \left[ q_{3,j+\frac{1}{2}}^{-} + q_{3,j+\frac{1}{2}}^{+} - \left( q_{4,j+\frac{1}{2}}^{+} - q_{4,j+\frac{1}{2}}^{-}\right) \right];$
  \EndFor
  \For{$j=1:J$}
  \State \textcolor[rgb]{1.00,0.00,1.00}{$\ 8+,\ 0\times$} $Q_{1,p,j} = \widehat{\mathcal{F}}_{p,j+\frac{1}{2}} - \widehat{\mathcal{F}}_{p,j-\frac{1}{2}},\ Q_{2,p,j} = \widehat{\mathcal{F}}_{p,j+\frac{1}{2}} + \widehat{\mathcal{F}}_{p,j-\frac{1}{2}};\ F_{p,j}^{\left(l\right)} = 0,\ l=0,1,2;$
  \For{$k=1:P$}
  \State \textcolor[rgb]{1.00,0.00,1.00}{$\ 8+,\ 8\times$} $q_{p,j,k} = q_{p,j}^{\left(0\right)} \left(t_{k} \right) + q_{p,j}^{\left(1\right)} \left(t_{k} \right) D_{1k} + q_{p,j}^{\left(2\right)} \left(t_{k} \right) D_{2k};$
  \State \textcolor[rgb]{1.00,0.00,0.00}{$\ 4+,\ 5\times$} $\widetilde{\rho} = m - 2\lambda \rho_{j,k} = m - \Lambda \left( \left( q_{1,j,k} \right)^{2} + \left( q_{3,j,k} \right)^{2} - \left( q_{2,j,k} \right)^{2} - \left( q_{4,j,k} \right)^{2} \right);$
  \State \textcolor[rgb]{1.00,0.00,0.00}{$\ 0+,\ 4\times$} $\mathrm{temp}_{1,1} = W_{1k} q_{2,j,k};\ \mathrm{temp}_{1,2} = W_{1k} q_{1,j,k};\ \mathrm{temp}_{1,3} = W_{1k} q_{4,j,k};\ \mathrm{temp}_{1,4} = W_{1k} q_{3,j,k};$
  \State \textcolor[rgb]{1.00,0.00,0.00}{$\ 0+,\ 8\times$} $\mathrm{temp}_{2,1} = W_{1k} \widetilde{\rho} q_{3,j,k};\ \mathrm{temp}_{2,2} = -W_{1k} \widetilde{\rho} q_{4,j,k};\ \mathrm{temp}_{2,3} = -W_{1k} \widetilde{\rho} q_{1,j,k};\ \mathrm{temp}_{2,4} = W_{1k} \widetilde{\rho} q_{2,j,k};$
  \State \textcolor[rgb]{1.00,0.00,1.00}{$20 +,12 \times$} $F_{p,j}^{\left(0\right)} = F_{p,j}^{\left(0\right)} + \mathrm{temp}_{2,p};\ F_{p,j}^{\left(1\right)} = F_{p,j}^{\left(1\right)} + \mathrm{temp}_{1,p} + D_{1k} \mathrm{temp}_{2,p};\ F_{p,j}^{\left(2\right)} = F_{p,j}^{\left(2\right)} + D_{4k} \mathrm{temp}_{1,p} + D_{2k} \mathrm{temp}_{2,p};$
  \EndFor
  \State \textcolor[rgb]{1.00,0.00,1.00}{$20+,12\times$} $u_{p,j}^{\left(0\right)} \left( t_{k+1} \right) = \frac{1}{3} \left( v_{p,j}^{\left(0\right)} + 2 p_{p,j}^{\left(0\right)} + q_{p,j}^{\left(0\right)} - u_{p,j}^{\left(0\right)} \left( t_{k} \right) + T_{0} \left[ F_{p,j}^{\left(0\right)} - Q_{1,p,j} \right] \right);$
  \State \textcolor[rgb]{1.00,0.00,1.00}{$20+,16\times$} $u_{p,j}^{\left(1\right)} \left( t_{k+1} \right) = \frac{1}{3} \left( v_{p,j}^{\left(1\right)} + 2 p_{p,j}^{\left(1\right)} + q_{p,j}^{\left(1\right)} - u_{p,j}^{\left(1\right)} \left( t_{k} \right) + T_{1} \left[ F_{p,j}^{\left(1\right)} - Q_{2,p,j}C_{1} \right] \right);$
  \State \textcolor[rgb]{1.00,0.00,1.00}{$20+,16\times$} $u_{p,j}^{\left(2\right)} \left( t_{k+1} \right) = \frac{1}{3} \left( v_{p,j}^{\left(2\right)} + 2 p_{p,j}^{\left(2\right)} + q_{p,j}^{\left(2\right)} - u_{p,j}^{\left(2\right)} \left( t_{k} \right) + T_{2} \left[ F_{p,j}^{\left(2\right)} - Q_{1,p,j}C_{2} \right] \right);$
  \EndFor
  \State \textcolor[rgb]{1.00,0.00,0.00}{$1+,0\times$} time=time+$\tau$;
  \EndWhile
  \end{algorithmic}
\end{breakablealgorithm}
}




\end{document}